\documentclass[reqno]{amsart}
\usepackage{amsmath,amssymb,latexsym, url,graphicx,newtxtext,newtxmath,tikz}
\usepackage[margin=1in]{geometry}
\usepackage{enumitem}
\usepackage{booktabs}
\usepackage[colorlinks=true,linkcolor=blue,citecolor=blue,urlcolor=blue]{hyperref}

\newtheorem{theorem}{Theorem}[section]
\newtheorem{lemma}[theorem]{Lemma}
\newtheorem{proposition}[theorem]{Proposition}
\newtheorem{problem}[theorem]{Problem}
\newtheorem{conjecture}[theorem]{Conjecture}
\newtheorem{corollary}[theorem]{Corollary}
\theoremstyle{definition}
\newtheorem{definition}[theorem]{Definition}
\newtheorem{remark}[theorem]{Remark}
\newtheorem{question}[theorem]{Question}

\DeclareMathOperator{\SO}{SO}
\DeclareMathOperator{\tr}{tr}
\DeclareMathOperator{\dist}{dist}
\DeclareMathOperator{\diag}{diag}
\DeclareMathOperator{\Ssup}{S}
\newcommand{\R}{\mathbb{R}}
\newcommand{\Z}{\mathbb{Z}}
\newcommand{\Q}{\mathbb{Q}}
\newcommand{\Sph}{\mathbb{S}}
\newcommand{\Dom}{\mathcal{D}}
\newcommand{\Kdom}{\mathcal{D}_{\mathrm h}}
\newcommand{\Pcyl}{\mathcal{P}_{\mathrm{cyl}}}
\newcommand{\Dsame}{D_{\mathrm{same}}}
\newcommand{\Dcross}{D_{\mathrm{cross}}}

\begin{document}

\title[Insertion and expansion of cylinders]{Insertion and expansion of cylinders around
the unit ball} 
\author[I. Mati\'c]{
Ivan Mati\'c}
\address{
Baruch College, City University of New York}\email{ivan.matic@baruch.cuny.edu}
\author[R. Radoi\v ci\'c]{
Rado\v s Radoi\v ci\'c} \address{
Baruch College, City University of New York}\email{rados.radoicic@baruch.cuny.edu}
\author[A. Spomer]{
Andreas Spomer}
\address{
Universit\"at zu K\"oln} \email{andreas.spomer1996@gmail.com} 

\begin{abstract}
A unit ball is surrounded with a ring of $n$ equal infinite circular cylinders. Each cylinder touches the ball and its
two neighbors. Can one more cylinder of the same radius be added if the configuration may be rearranged? Can the
$n$ cylinders instead be enlarged, and by how much? We prove that insertion is impossible for $3\le n\le7$ and
possible for $n\ge9$. The case $n=8$ remains open. The case $n=6$ is Kuperberg's problem from 1990, which was
resolved recently. We prove that the ring of four cylinders cannot be expanded. This settles a conjecture of Ogievetsky
and Shlosman from 2018. For $n\ge5$ the largest possible expansion radius remains unknown.
\end{abstract}
\maketitle

\section{Introduction}\label{sec:intro}

\begin{figure}[ht]
\centering
\begin{tikzpicture}[scale=1, line cap=round, line join=round,
  cylbody/.style={fill=gray!6, draw=black, line width=0.5pt},
  cyltop/.style={fill=gray!22, draw=black, line width=0.5pt},
  declare function={az=25; el=52;      
    mm=7; om=sin(180/mm)/(1-sin(180/mm)); R=1+om;   
    Ht=0.85; Hb=1.15;                  
    XC(\q)=R*sin(\q-az); DD(\q)=-R*cos(\q-az)*sin(el); }]
\foreach \q in {205, 256.4286, 153.5714}{
  \filldraw[cylbody] ({XC(\q)-om}, {DD(\q)+Ht*cos(el)})
    -- ({XC(\q)-om}, {DD(\q)-Hb*cos(el)})
    arc[start angle=180, end angle=360, x radius={om}, y radius={om*sin(el)}]
    -- ({XC(\q)+om}, {DD(\q)+Ht*cos(el)})
    arc[start angle=0, end angle=180, x radius={om}, y radius={om*sin(el)}] -- cycle;
  \filldraw[cyltop] ({XC(\q)},{DD(\q)+Ht*cos(el)}) circle[x radius={om}, y radius={om*sin(el)}];
}
\shade[ball color=gray!35] (0,0) circle (1);
\draw[line width=0.5pt] (0,0) circle (1);
\foreach \q in {307.8571, 102.1429, 359.2857, 50.7143}{
  \filldraw[cylbody] ({XC(\q)-om}, {DD(\q)+Ht*cos(el)})
    -- ({XC(\q)-om}, {DD(\q)-Hb*cos(el)})
    arc[start angle=180, end angle=360, x radius={om}, y radius={om*sin(el)}]
    -- ({XC(\q)+om}, {DD(\q)+Ht*cos(el)})
    arc[start angle=0, end angle=180, x radius={om}, y radius={om*sin(el)}] -- cycle;
  \filldraw[cyltop] ({XC(\q)},{DD(\q)+Ht*cos(el)}) circle[x radius={om}, y radius={om*sin(el)}];
}
\draw[thin] ({XC(az+180+3*360/mm)},{DD(az+180+3*360/mm)+Ht*cos(el)})
  -- ({XC(az+180+3*360/mm)+om},{DD(az+180+3*360/mm)+Ht*cos(el)})
  node[midway, anchor=north, inner sep=2pt] {$\omega_{7}$};
\end{tikzpicture}
\begin{tikzpicture}[scale=0.85,
  declare function={mm=7; om=sin(180/mm)/(1-sin(180/mm)); R=1+om; a0=90; }]
\fill[gray!15] (0,0) circle (1);
\draw[line width=0.5pt] (0,0) circle (1);
\foreach \k in {0,...,6}{
  \draw[fill=gray!4, line width=0.5pt]
    ({R*cos(a0+\k*360/mm)}, {R*sin(a0+\k*360/mm)}) circle[radius={om}];
}
\draw[thin] ({R*cos(a0)},{R*sin(a0)}) -- ({(R+om)*cos(a0)}, {(R+om)*sin(a0)})
  node[midway, anchor=west, inner sep=2pt] {$\omega_{7}$};
\end{tikzpicture}

\caption{The ring of seven pairwise tangent cylinders around the unit ball (left)
and its equatorial cross-section (right).}
\label{fig:ring3d}
\end{figure}

\subsection{Formulation}\label{subsection:formulation}

A \emph{cylinder} means a closed solid right circular cylinder of infinite length. Two cylinders
are \emph{non-overlapping} if their interiors are disjoint. A cylinder \emph{touches} the unit ball
$B$ if it meets $B$ but not the interior of $B$.
For every integer $n\ge3$ it is
possible for $n$ pairwise non-overlapping cylinders of radius
\begin{align}
\omega_n&=\frac{\sin\frac{\pi}{n}}{1-\sin\frac{\pi}{n}} \label{eqn:omegaNDefinition}
\end{align}
to simultaneously touch the unit ball. The canonical way is the \emph{ring configuration} of
Figure~\ref{fig:ring3d}. 
The cylinders are perpendicular to a common plane, and their axes pass through the vertices of a
regular $n$-gon. Each of them is tangent to its two neighbors and to the ball. Much of the earlier work on cylinders touching a ball concerns the following two questions, for each
fixed $n\ge3$.

\begin{question}[Insertion]\label{Q:insertion}
Can one more cylinder of radius $\omega_n$ be added, for a total of $n+1$? The cylinders may be
rearranged, but they must be non-overlapping and touch the unit ball.
\end{question}

\begin{question}[Expansion]\label{Q:expansion}
Let $\omega'>\omega_n$. Can the $n$ cylinders be enlarged to the common radius $\omega'$ and rearranged
so that they are non-overlapping and touch the unit ball? For which $\omega'$ is the answer yes?
\end{question}
 
\subsection{Cylinder-packing pairs}\label{subsection:Pcyl}
All results of this paper are statements about the following set of pairs.

\begin{definition}\label{def:Pcyl}
$\Pcyl$ is the set of pairs $(n,\omega)\in\mathbb N\times(0,\infty)$ such that no more than $n$
pairwise non-overlapping cylinders of radius $\omega$ can simultaneously touch the unit ball. We call
these \emph{cylinder-packing pairs}.
\end{definition}

The following lemma is elementary and is also a consequence of Lemma~\ref{lem:monotone} in
Section~\ref{sec:reduction}.

\begin{lemma}\label{lem:Pcylmono}
If $(n,\omega)\in\Pcyl$, then $(n',\omega')\in\Pcyl$ for every integer $n'\ge n$ and every
$\omega'\ge\omega$.
\end{lemma}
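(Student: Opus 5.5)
The two parameters can be handled separately, since
\[
(n,\omega)\to(n',\omega)\to(n',\omega').
\]
Monotonicity in $n$ is immediate. If $(n,\omega)\in\Pcyl$, then no configuration of more than $n$ pairwise non-overlapping cylinders of radius $\omega$ touches the unit ball. Since $n'\ge n$, no configuration of more than $n'$ such cylinders exists either, so $(n',\omega)\in\Pcyl$. It therefore remains to show: if $(n,\omega)\in\Pcyl$ and $\omega'\ge\omega$, then $(n,\omega')\in\Pcyl$.

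I will prove the contrapositive by shrinking cylinders. Suppose $m>n$ pairwise non-overlapping cylinders $C_1,\dots,C_m$ of radius $\omega'$ all touch $B$. For each $i$, let $p_i\in\partial B$ be a point where $C_i$ meets $B$. Since $C_i$ does not meet the interior of $B$, the axis $\ell_i$ of $C_i$ is at distance exactly $1+\omega'$ from the origin $O$. The closest point of $\ell_i$ to $O$ is $a_i=(1+\omega')p_i$, and $\ell_i$ is perpendicular to $p_i$.

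Let $\ell_i'$ be the line parallel to $\ell_i$ through $(1+\omega)p_i$, and let $C_i'$ be the cylinder of radius $\omega$ with axis $\ell_i'$. Because $\ell_i'$ is orthogonal to $p_i$, its distance to $O$ is $1+\omega$, so $C_i'$ touches $B$ at $p_i$.

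The key claim is the containment $C_i'\subseteq C_i$. Work in the plane orthogonal to the common direction of $\ell_i$ and $\ell_i'$. In this cross-section, $C_i'$ and $C_i$ become disks centered at $(1+\omega)p_i$ and $(1+\omega')p_i$, with radii $\omega$ and $\omega'$ respectively. The distance between the centers is $\omega'-\omega$, which equals the difference of the radii. Hence the small disk is internally tangent to the large one at $p_i$ and lies inside it, which proves $C_i'\subseteq C_i$.

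Containment gives non-overlap: the interiors of the $C_i'$ lie inside the interiors of the $C_i$, which are pairwise disjoint. Thus $m>n$ non-overlapping cylinders of radius $\omega$ touch $B$, contradicting $(n,\omega)\in\Pcyl$.

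The only step needing care is the containment argument, and it reduces to the two-dimensional fact about internally tangent disks used above, because the shrinking is a homothety centred at $p_i$ applied in the cross-section. This is presumably what the later Lemma~\ref{lem:monotone} formalizes.
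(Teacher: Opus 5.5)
Your proof is correct, but it takes a different route from the one the paper points to. The paper gives no separate argument: it calls the lemma elementary and notes that it also follows from Lemma~\ref{lem:monotone}. That route goes through Lemma~\ref{lem:dictionary}. The homothety of ratio $\frac{1}{1+\omega}$ turns cylinders of radius $\omega$ into tangent lines with pairwise distances at least $\tau(\omega)=\frac{2\omega}{1+\omega}$, and this threshold is increasing in $\omega$. A family of lines separated by $\tau(\omega')$ is therefore also separated by $\tau(\omega)\le\tau(\omega')$, which is the inclusion $\Dom(\tau(\omega'))\subseteq\Dom(\tau(\omega))$. Hence $N_{\max}(\tau(\omega'))\le N_{\max}(\tau(\omega))\le n\le n'$.

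You stay with the cylinders and shrink each one to an internally tangent cylinder touching $B$ at the same point. Non-overlap is then inherited from the containment $C_i'\subseteq C_i$, with no computation of axis distances and no use of the formula for $\tau$. The two constructions actually produce the same axes: $\ell_i'$ is the image of $\ell_i$ under the homothety of ratio $\frac{1+\omega}{1+\omega'}$ centred at $O$. So in the normalized picture your new cylinders and the original ones correspond to the same tangent lines.

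The paper's version is a one-liner once the reformulation in terms of $N_{\max}$ and the safe zone is available, and that reformulation is what the rest of the paper uses. Yours is self-contained and makes the geometric reason visible. In particular, Lemma~\ref{lem:monotone} does not formalize your shrinking step; it is just the trivial monotonicity of the safe zone in the line problem.

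A cosmetic point: you need not argue that a contact point $p_i$ satisfies $a_i=(1+\omega')p_i$. Defining $p_i=a_i/(1+\omega')$ directly is all your construction uses.
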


In the ring, adjacent axes are at distance $2(1+\omega_n)\sin\frac\pi n=2\omega_n$. So the ring
consists of $n$ non-overlapping cylinders of radius $\omega_n$. With Lemma~\ref{lem:Pcylmono} this
gives
\begin{align}\label{eq:ringLowerBound}
(n-1,\;\omega)\;\notin\;\Pcyl\quad\text{for every }0<\omega\le\omega_n\ \ (n\ge3).
\end{align}
In the pair language, insertion at $n$ is possible if and only if $(n,\omega_n)\notin\Pcyl$.
Expansion to the radius $\omega'$ is possible if and only if $(n-1,\omega')\notin\Pcyl$. Kuperberg's
question is whether $(6,1)\in\Pcyl$.

Two quantities are read off from $\Pcyl$. For $\omega>0$ let $\mathcal N(\omega)$ be the smallest
$n$ with $(n,\omega)\in\Pcyl$. It is the largest number of non-overlapping cylinders of radius
$\omega$ that can touch the unit ball. For $n\ge2$ let $R_n$ be the largest $\omega$ with
$(n-1,\omega)\notin\Pcyl$. It is the largest radius at which $n$ non-overlapping cylinders can touch
the unit ball. The maximum exists by Corollary~\ref{cor:attain}. Question~\ref{Q:expansion} asks for
$R_n$, and \eqref{eq:ringLowerBound} gives $R_n\ge\omega_n$.

\subsection{Results}\label{subsection:results}

\begin{theorem}[No insertion for $3\le n\le7$]\label{thm:A}
For every $n\in\{3,4,5,6,7\}$ we have $(n,\omega_n)\in\Pcyl$. Hence insertion is impossible for these
$n$.
\end{theorem}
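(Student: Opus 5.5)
We must show that $n+1$ non-overlapping cylinders of radius $\omega_n$ cannot all touch $B$ when $3\le n\le 7$. By Lemma~\ref{lem:Pcylmono} and the monotonicity of $\omega_n$ in $n$, it suffices to treat each $n$ separately. The reduction of Section~\ref{sec:reduction} lets us assume that every cylinder is tangent to $B$. So each cylinder of radius $\omega$ is encoded by its unit axis direction $u\in\Sph^2$ (up to sign) together with its tangency point $p\in\Sph^2$, where $p\perp u$. The axis is the line $\{(1+\omega)p+tu\}$.

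Two cylinders are non-overlapping iff the distance between their axes is at least $2\omega$. The first step is to write this distance explicitly as a function $d\bigl((p,u),(p',u')\bigr)$ on the compact configuration space $\{(p,u):p\perp u\}/\pm$, which is a copy of $\SO(3)$ modulo a finite group. The condition $d\ge 2\omega_n$ is closed, so the question becomes whether $n+1$ points can be placed in a compact space with all pairwise "distances" at least $2\omega_n$.

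The second step is a pairwise necessary condition, in the spirit of spherical codes. The plan is to show that if two tangent cylinders of radius $\omega$ do not overlap, then their tangency points satisfy $\angle(p,p')\ge\theta(\omega)$, or more generally satisfy a weighted condition that also involves the axis angle. For parallel axes this is exactly the planar condition $\angle\ge 2\arcsin(\omega/(1+\omega))$, which equals $2\pi/n$ at $\omega=\omega_n$. For skew axes the allowed angle is smaller, and this is where the difficulty lies. Kuperberg's problem ($n=6$, $\omega=1$) shows that a naive tangency-point bound cannot suffice. Still, for $n=3,4$ a crude bound should work. Here $\omega_3\approx 6.46$ and $\omega_4\approx 2.41$. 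For such large radii, a cylinder tangent at $p$ blocks a large cap, because any other cylinder of comparable radius touching $B$ must have its tangency point far away. A cap-area or Tammes-type count, such as the known optimal Tammes angles for $4,5,6$ points, then gives a contradiction.

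For $n=5,6,7$, where $\omega_5\approx1.43$, $\omega_6=1$ and $\omega_7\approx0.77$, the plan is a computer-assisted branch-and-bound on $(\SO(3)/\text{finite})^{n+1}$ modulo the global rotation. First normalize by fixing one cylinder to $p_1=e_1$, $u_1=e_3$, then subdivide the remaining parameters into boxes. Each box is rejected by interval arithmetic if some pair provably has axis distance $<2\omega_n$. Rejection also uses the cheap pairwise bounds from step two, namely a minimum angle between tangency points as a function of the angle between axes, to prune most boxes early. Ideally we would instead derive a reduced combinatorial statement: the tangency points form a graph of "close" pairs whose structure forces a cyclic order around a great circle, with the total angular budget summing past $2\pi$. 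The honest expectation, however, is that the argument requires a verified computation.

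The main obstacle is this last case analysis near the extremal ring configuration, especially for $n=7$. Since $(n-1,\omega_n)\notin\Pcyl$, the ring with $n$ cylinders is tight, and near that configuration the branch-and-bound boxes cannot be rejected by strict inequalities. I would handle this with a local rigidity argument. Using first- and second-order expansion of the axis distances around the ring, I would show that inserting an extra cylinder forces other cylinders into a neighborhood of the ring. Inside that neighborhood there is no room, because the ring is locally jammed. Only outside this neighborhood would boxes be excluded numerically.
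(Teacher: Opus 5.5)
Your argument for $n=3$ works, and it is simpler than the paper's route through Theorem~\ref{thm:OS}. The tangency points lie on the lines, so $\|\overrightarrow{p}-\overrightarrow{p'}\|\ge\dist(\ell,\ell')\ge\sqrt3$, i.e.\ $\overrightarrow{p}\cdot\overrightarrow{p'}\le-\frac12$. Four such unit vectors would give $\bigl\|\sum_i\overrightarrow{p_i}\bigr\|^2\le4-6<0$. The same inequality shows that skew axes never permit a smaller angle, contrary to your remark: the point condition $\|\overrightarrow{p}-\overrightarrow{p'}\|\ge\tau$ holds for every pair and is attained by parallel axes.

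For $n=4$, however, the crude count fails. There $\tau=\sqrt2$, so the condition on tangency points is only $\angle(\overrightarrow{p},\overrightarrow{p'})\ge\frac\pi2$. Five points satisfy it, and even six (the vertices of an octahedron). Since that pairwise condition is sharp, no Tammes or cap-area count can exclude five lines, and the directions must enter. One elementary repair uses Theorem~\ref{thm:OSequality}. If five lines had pairwise distances at least $\sqrt2$, every four of them would form some $\mathcal C_\beta$. The lines of $\mathcal C_\beta$ are either all parallel ($\beta=0$) or split into exactly two parallel pairs. Comparing the parallel classes of the four-element subsets then gives a contradiction; in the all-parallel case, five tangency points on one great circle would need pairwise angles at least $\frac\pi2$. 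The paper instead proves a stronger bound with a two-point certificate at $\tau=\frac{39}{32}$.

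For $n=5,6,7$ the proposal is a research plan rather than a proof, and its one structural claim is wrong. The branch-and-bound runs over $(n+1)$-tuples of tangent lines modulo rotation, a space of dimension $3n\in\{15,18,21\}$. Nothing in the plan makes that search tractable.

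The local rigidity step assumes the ring is locally jammed. But Ogievetsky and Shlosman unlock the ring of six through the pinwheel family of Section~\ref{sec:pinwheel}, and the rings of five and seven can be expanded (Remark~\ref{rem:five} and Firsching's computations). So $n$ cylinders of radius $\omega_n$ are not confined near the ring at all. The tightness you want to handle is also absent. If the theorem holds, then by Corollary~\ref{cor:attain} the optimal separation of $n+1$ tangent lines is strictly below $2\sin\frac\pi n$. There is then no tight $(n+1)$-configuration near the ring or anywhere else, and the difficulty is global, not local.

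The idea you are missing is the two-point bound of Section~\ref{sec:DBSequence}. One exhibits positive semidefinite $C_\ell$ such that the positive-type function $F(g)=\sum_\ell\tr\bigl(D_\ell^{-1}C_\ell\,T^\ell(g)\bigr)$ satisfies $F\le-1$ on $\Dom(\tau)$ and $F(e)<n$. Summing $F(g_i^{-1}g_j)$ over all ordered pairs of a packing of size $N$ gives $0\le N\,F(e)-N(N-1)$, hence $N\le n$. This replaces the $3n$-dimensional search over configurations by a single inequality on the safe zone, a subset of the three-dimensional group $\SO(3)$. In quaternion coordinates it becomes $G\ge0$ on $\Kdom(\tau)$, which the paper certifies exactly by a sum-of-squares identity, together with finitely many rational positivity and trace checks.
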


The case $n=6$ is Kuperberg's problem, solved in \cite{MRsix,MRCode2026}. Theorem~\ref{thm:A} places
it in a uniform family.

\begin{theorem}[Insertion for $n\ge9$]\label{thm:Ge9}
For every $n\ge9$ we have $(n,\omega_n)\notin\Pcyl$. Hence insertion is possible for these $n$.
\end{theorem}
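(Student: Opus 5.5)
The plan is to exhibit, for each $n\ge9$, an explicit configuration of $N=n+1$ pairwise non-overlapping cylinders of radius $\omega=\omega_n$, all touching $B$. Monotonicity alone will not do this. Lemma~\ref{lem:Pcylmono} passes membership in $\Pcyl$ upward in $n$, so its contrapositive only passes non-membership downward, and a single construction at $n=9$ would not cover larger $n$. We therefore need a uniform family.

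\textbf{The family.} A cylinder of radius $\omega$ touches $B$ exactly when its axis is at distance $\rho=1+\omega$ from the origin. Two such cylinders are non-overlapping exactly when their axes are at distance at least $2\omega$. We take the \emph{twisted ring}:
\begin{itemize}[label={}]
\item fix a line $\ell_0$ tangent to the sphere of radius $\rho$ at the point $(\rho,0,0)$, with direction $(0,\sin\theta,\cos\theta)$ for a tilt angle $\theta\in[0,\pi/2)$;
\item let $\ell_k$ be its image under rotation by $2\pi k/N$ about the $z$-axis, for $k=0,\dots,N-1$.
\end{itemize}
These lines are generators of one ruling of a hyperboloid of one sheet with waist radius $\rho$. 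Every $\ell_k$ is at distance $\rho$ from the origin, so the tangency condition holds automatically. For $\theta=0$ this is the parallel ring, where adjacent axes are at distance $2\rho\sin(\pi/N)<2\omega_n$, so we cannot take $\theta=0$.

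\textbf{Pairwise distances.} By symmetry it suffices to compute $d_k(\theta)=\dist(\ell_0,\ell_k)$ for $1\le k\le N/2$. Write $\alpha=2\pi k/N$, and let $p_0,p_k$ and $u_0,u_k$ be the tangency points and directions of $\ell_0,\ell_k$. The skew-line formula $|(p_k-p_0)\cdot(u_0\times u_k)|/|u_0\times u_k|$ gives an explicit trigonometric expression of the shape
\[
d_k(\theta)=\frac{2\rho\sin^2(\alpha/2)\,\cos\theta}{\sqrt{\sin^2(\alpha/2)\cos^2\theta+\sin^2\theta\,(\ldots)}}.
\]
The bracketed term is to be obtained from the cross product; I would work this out first and double-check it numerically. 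The goal is to find $\theta=\theta_n$ with $\min_k d_k(\theta_n)\ge 2\omega_n$. For the adjacent pair, twisting changes the nearest approach of $\ell_0$ and $\ell_1$ from a chord of the waist circle to a genuine skew distance. Since $N=n+1$ is only one more than the parallel count, I expect this distance to exceed $2\omega_n$ for a suitable intermediate $\theta$; this is the Ogievetsky--Shlosman twisting mechanism. I would choose $\theta_n$ by balancing the $k=1$ and $k=2$ constraints, which should be the binding ones.

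\textbf{Main obstacle.} The hardest step is controlling all pairs simultaneously. As $\theta$ grows, the adjacent distance improves, but non-adjacent generators, which cross near the waist from opposite sides, approach each other, and at large tilt the minimum may be attained at some $k\ge2$. My approach is as follows.
\begin{enumerate}[label=(\roman*)]
\item Expand in $1/n$ with $\theta$ fixed. Since $\omega_n\sim\pi/n$, the problem reduces to an inequality between explicit functions of $k$ and $\theta$. Find an interval of $\theta$, independent of $n$, on which it holds for all $k$ with a margin of order $1/n$.
\item Make this rigorous with explicit error bounds valid for $n\ge n_0$.
\item Handle the finitely many cases $9\le n<n_0$ by a certified numerical check of the finitely many $d_k(\theta_n)$. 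Interval arithmetic suffices, because each inequality is strict.
\end{enumerate}
If the single-parameter twisted ring turns out to be too tight at $n=9$, the fallback is a two-layer variant: generators from both rulings, or unequal tilt angles. This is checked in the same way, so the case $n=8$ being left open is consistent with the margin vanishing there.
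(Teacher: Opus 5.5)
\textbf{The twisted ring cannot work, for any tilt $\theta$ and any $n$.} The distance between two lines is at most the distance between any point of one and any point of the other. Taking the two tangency points gives
\[
d_1(\theta)=\dist(\ell_0,\ell_1)\le |p_1-p_0|=2\rho\sin\frac{\pi}{N}<2\rho\sin\frac{\pi}{n}=2\omega_n ,
\]
where the last equality is $\omega_n=(1+\omega_n)\sin\frac{\pi}{n}$. Carrying out your cross product gives exactly
\[
d_k(\theta)^2=\frac{4\rho^2\sin^2\frac{\alpha}{2}}{1+\tan^2\theta\,\cos^2\frac{\alpha}{2}} .
\]
This is strictly decreasing in $\theta$ unless $\alpha=\pi$. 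So twisting brings every pair closer than its chord. Your expectation that the skew distance of $\ell_0,\ell_1$ overtakes the chord for some intermediate $\theta$ is backwards.

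The same bound disposes of both fallbacks. Every generator of your hyperboloid touches the sphere of radius $\rho$ on the waist circle, which is a great circle. Any lines tangent at points of one great circle, whatever their tilts, have two tangency points within chord $2\rho\sin\frac{\pi}{N}$ by pigeonhole. Generators from opposite rulings are moreover coplanar, so they intersect unless parallel.

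\textbf{The missing idea is to move the tangency points off a great circle.} The paper's pinwheel (Definition~\ref{def:pinwheel}) places $N/2$ lines tangent at latitude $\varphi$ and $N/2$ at latitude $-\varphi$. All are tilted by the same $\delta$, and the lower group is staggered in longitude by an offset $\psi$. This is the Ogievetsky--Shlosman family you allude to, and their unlocking mechanism needs $\varphi>0$.

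Within a group the longitude step is $4\pi/N$ rather than $2\pi/N$. So the same-group distances are about $4\pi\Phi/N$, with a constant $2\Phi>\frac{21}{20}$, and this beats $2\sin\frac{\pi}{n}$ for large $n$. The cross-group distances are controlled by choosing $\psi$ so that all cross longitude differences avoid the single interval on which they are too small (Lemma~\ref{lem:arc}).

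With the fixed rational data $\sin\varphi=\frac{9}{20}$, $\tan\delta=\frac{31}{64}$ and elementary tail estimates, this covers two ranges (Theorem~\ref{thm:B}). For odd $n\ge19$ it uses $N=n+1$. For even $n\ge38$ it uses $N=n+2$, because a pinwheel has an even number of lines.

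The remaining values $n\in\{9,\dots,18\}\cup\{20,22,\dots,36\}$ are not reached by that construction. They are settled by separate, numerically found configurations with rational data, checked exactly (Theorem~\ref{thm:window}). So even with the right family, your step (iii) should not be expected to be a finite check on one parametrized construction all the way down to $n=9$.
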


\begin{theorem}[Ogievetsky--Shlosman conjecture]\label{thm:OS}
For every $\omega>1+\sqrt2$, $(3,\omega)\in\Pcyl$. Equivalently, $R_4=1+\sqrt2$.
\end{theorem}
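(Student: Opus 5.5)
We need to show that four non-overlapping cylinders of radius $\omega>1+\sqrt2$ cannot all touch the unit ball. Equivalently, $R_4=1+\sqrt2$. Here $\omega_4=\sin(\pi/4)/(1-\sin(\pi/4))=1+\sqrt2$, so \eqref{eq:ringLowerBound} already gives $R_4\ge 1+\sqrt2$, and only the upper bound is at issue. By Lemma~\ref{lem:Pcylmono} it suffices to exclude four cylinders of radius $\omega$ slightly above $1+\sqrt2$. Corollary~\ref{cor:attain} lets us argue about an extremal configuration: we take four cylinders touching $B$ at the maximal radius $R_4$ and aim to show $R_4\le 1+\sqrt2$.

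\textbf{Reduction to lines.} Each cylinder touching $B$ has axis $\ell_i$ at distance exactly $1+\omega$ from the origin. Non-overlap means $\dist(\ell_i,\ell_j)\ge 2\omega$. Rescaling by $1/(1+\omega)$ turns this into a problem about four lines tangent to the unit sphere with pairwise distance at least $d=2\omega/(1+\omega)$. For $\omega=1+\sqrt2$ this gives $d=\sqrt2$, and $d$ increases with $\omega$. So the statement becomes: \emph{four lines tangent to the unit sphere cannot have all pairwise distances $>\sqrt2$.}

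\textbf{Parametrization.} A tangent line is determined by its contact point $p\in\Sph^2$ and a unit direction $u\perp p$. For skew lines $(p,u)$ and $(q,v)$ the distance is
\[
\frac{|\det(p-q,u,v)|}{|u\times v|},
\]
with the parallel case handled separately. Since $|p-q|\le 2$, two lines at distance $>\sqrt2$ need contact points with $|p-q|>\sqrt2$, that is, angular separation $>\pi/2$. Four points on $\Sph^2$ with pairwise angles $>\pi/2$ do exist (the tetrahedral configuration, with angle about $109.47^\circ$), so this crude bound is not enough. We must use the direction constraint.

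\textbf{Refined pairwise bound.} The plan is to derive, for each pair, an inequality of the form
\[
\dist(\ell_i,\ell_j)^2\le f(\theta_{ij},\alpha_{ij},\beta_{ij}),
\]
where $\theta_{ij}$ is the angle between $p_i$ and $p_j$, and $\alpha_{ij},\beta_{ij}$ are the angles of $u_i,u_j$ relative to the great circle through $p_i$ and $p_j$. The distance $\sqrt2$ should be reached only when $p_i\perp p_j$ and each direction is perpendicular to the plane spanned by the contact points. In the square ring this is exactly what happens for adjacent lines, and opposite lines are parallel at distance $2$. For all six pairs to exceed $\sqrt2$, every pair must satisfy a strict version of this. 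Summing the resulting angular constraints over the six pairs, the four unit directions $u_i$ with $u_i\perp p_i$ should yield a contradiction. Concretely, I would show that $\dist>\sqrt2$ forces $|u_i\cdot u_j|$ to be bounded away from zero and of a definite "twist" sign relative to $p_i\times p_j$. Then I would use a parity/orientation argument, or a sum such as $\sum_{i<j}(\cdot)$ bounded via $\bigl|\sum_i u_i\bigr|^2\ge0$, to show that four lines cannot realize it.

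\textbf{Expected obstacle.} I expect the last step to be the crux: converting the six pairwise inequalities into a global contradiction. The configuration space is 8-dimensional modulo the 3-dimensional rotation group, and the equality case is degenerate: the ring is rigid only up to sliding, and there may be other near-extremal twisted configurations. If no clean inequality emerges, I would fall back on a compactness-plus-local-analysis argument. Near the ring I would linearize to show it is a strict local maximum modulo symmetries. Away from it I would cover the compact configuration space by boxes and certify the bound $\min_{i<j}\dist(\ell_i,\ell_j)\le\sqrt2$ with interval arithmetic, as in the computer-assisted approach of \cite{MRsix,MRCode2026}.
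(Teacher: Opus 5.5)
There is a genuine gap. The pairwise facts you intend to build on are false, and the global step is never supplied.

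\textbf{The pairwise claims are false.} The distance $\sqrt2$ is not confined to $p_i\perp p_j$ with both directions normal to the plane of the contact points. In $\mathcal C_\beta$ with $0<\beta<\pi$, each line of the first pair and each line of the second pair are at distance $\sqrt2$ (Lemma~\ref{lem:Cbeta}), yet $p\cdot p'=-\frac12(1\pm\cos\beta)<0$. Nor does $\dist>\sqrt2$ keep $|u_i\cdot u_j|$ away from $0$. The lines $e_3+\R e_1$ and $-e_3+\R e_2$ are at distance $2$ and have $u\cdot u'=0$. They also have $p\times p'=0$, so no twist sign is even defined.

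\textbf{The global step is missing.} Your sketch never says where the number four enters, although it must. Three parallel tangent lines through the vertices of an equilateral triangle on a great circle are at pairwise distance $\sqrt3>\sqrt2$. An unweighted bound like $\|\sum_iu_i\|^2\ge0$ holds in every dimension and cannot see this.

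\textbf{The fallback fails as well.} The ring is not a strict local maximum modulo rotations. It lies on the circle $\mathcal C_\beta$ of configurations with all six distances at least $\sqrt2$, all equal to $\sqrt2$ for $\beta\ne0$. Along this circle the two antipodal pairs of the ring jump from distance $2$ to $\sqrt2$ as soon as $\beta\ne0$. So the min-distance function is neither smooth nor strictly maximized at the ring, and linearization cannot handle this. Interval arithmetic also cannot certify $\min_{i<j}\dist(\ell_i,\ell_j)\le\sqrt2$ on boxes that meet a whole curve of equality cases. That would require an exact local argument, which is precisely what is missing.

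\textbf{The missing idea.} Compare the Gram matrix of the contact points $p_i$ with that of the unit normals $w_i=p_i\times u_i$ of the planes through $O$ containing the lines. A computation in a frame adapted to one line gives
\[
2(p\cdot p')^2-2(w\cdot w')^2=(u\cdot w'-w\cdot u')^2+\bigl((p'-p)\cdot(u\times u')\bigr)^2-2\|u\times u'\|^2 .
\]
For nonparallel lines the last two terms equal $(\dist^2-2)\|u\times u'\|^2$. Hence $\dist(\ell,\ell')\ge\sqrt2$ implies $p\cdot p'\le0$ and $|w\cdot w'|\le-p\cdot p'$ (Lemma~\ref{lem:OSLemma}). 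The second inequality is strict if $\dist>\sqrt2$ and the lines are not parallel.

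Four enters through linear dependence in $\R^3$: $\sum_ia_iw_i=0$ for some $a\ne0$. Reversing the direction of $\ell_i$ flips $w_i$, so you may assume all $a_i\ge0$. Then
\[
0=\Bigl\|\sum_ia_iw_i\Bigr\|^2\ge\sum_ia_i^2-\sum_{i\ne j}|w_i\cdot w_j|\,a_ia_j\ge\Bigl\|\sum_ia_ip_i\Bigr\|^2\ge0,
\]
so every inequality is tight.

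\textbf{Finishing the strict case.} If all distances exceed $\sqrt2$, then $p_i\cdot p_k<0$ for $i\ne k$. So $p_k\cdot\sum_ia_ip_i=0$ forces every $a_k>0$. Tightness then forces every pair of lines to be parallel. That puts four contact points on one great circle with pairwise angles $>\frac\pi2$, which is impossible.

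The paper runs the same chain under $\dist\ge\sqrt2$ and classifies the equality cases as the family $\mathcal C_\beta$ (Theorem~\ref{thm:OSequality}). It concludes because every $\mathcal C_\beta$ has a pair at distance exactly $\sqrt2$. No computer is needed.
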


Theorem~\ref{thm:OS} proves the conjecture of Ogievetsky and Shlosman \cite{OSsix,OSPlatonic,OSMorse}.
We also find all configurations of four non-overlapping cylinders of radius $1+\sqrt2$ touching the
unit ball. Theorem~\ref{thm:OSequality} shows that they form a single one-parameter family. The ring of
five cylinders can be expanded (Remark~\ref{rem:five}). Hence $n=4$ is the largest $n$ for which the
ring cannot be expanded.

\subsection{Background}\label{subsec:background}
Kuperberg \cite{K} posed Question~\ref{Q:insertion} for $n=6$ in 1990 (see also
\cite[Section~2.4, Conjecture~16]{BMP}). In this case $\omega_6=1$, so the ball and the cylinders
have the same radius. Heppes and Szab\'o \cite{HS} proved that $(8,1)\in\Pcyl$. Bra\ss{} and Wenk
\cite{BW} improved this to $(7,1)\in\Pcyl$. Yardimci \cite{Yar} showed that seven cylinders of radius
$1.04965$ cannot touch the ball. Kuperberg's question was answered in \cite{MRsix,MRCode2026}, which
proves $(6,1)\in\Pcyl$.

Kuperberg \cite{K} also asked for the value of $R_n$ \cite[Section~2.4, Problem~17]{BMP}. Ogievetsky
and Shlosman conjectured that $R_4=1+\sqrt2$ \cite{OSsix,OSPlatonic,OSMorse}.

Firsching \cite{F} found six cylinders of radius $1.049659>\omega_6$ around the unit ball and gave
exact coordinates for them. His numerical search also found expansions of the rings of five and seven
cylinders. Ogievetsky and Shlosman \cite{OSsix,OSCm,OSO6,OSPlatonic} developed a general method for
deforming the ring. They found a configuration $C_{\mathfrak m}$ of six cylinders of radius
$\frac18\bigl(3+\sqrt{33}\bigr)=1.0930\ldots$ and proved that it is a local maximum \cite{OSCm}. They
showed that the ring can be expanded for every even $n\ge6$ \cite[Section~6]{OSsix}. The survey
\cite{OSunlocking} describes this circle of ideas.

Our upper bounds use the two-point bound. It is the Lov\'asz theta number \cite{Lovasz} of a Cayley
graph on the rotation group. De Laat and Vallentin \cite{dLV} set up theta numbers for such infinite
packing graphs. For packings of spherical caps the two-point bound is the linear programming bound of
Delsarte, Goethals, and Seidel \cite{DGS}. For packings on the rotation group it was first worked out
by de Oliveira Filho, Spomer, and Vallentin \cite{SOV}. They parametrize $\SO(3)$ by Euler angles and
use Bochner's theorem \cite{Bochner} to describe the functions of positive type. We use quaternions
instead. Then every hypothesis of the bound becomes a statement about integers
(Section~\ref{sec:quat}). The surveys \cite{OV,VICM} and the thesis \cite{Spomer} describe the history
of these methods.

\subsection{Methods}\label{subsec:methodNonMembership}
To prove $(n,\omega_n)\notin\Pcyl$ we construct $n+1$ cylinders of radius $\omega_n$ touching $B$.
After the rescaling of Section~\ref{sec:reduction}, this means $n+1$ lines tangent to the unit sphere
with pairwise distances at least $2\sin\frac\pi n$. For large $n$ we use the \emph{pinwheel} of
Section~\ref{sec:pinwheel}. It consists of two groups of tangent lines at latitudes $\pm\varphi$,
tilted by an angle $\delta$ and staggered in longitude. This is the family through which Ogievetsky
and Shlosman unlock the ring \cite[Section~6]{OSsix}. We fix one rational pair
$(\sin\varphi,\tan\delta)$, so that every inequality becomes a comparison of rational numbers.

\begin{theorem}\label{thm:B}
For every odd $n\ge19$ and every even $n\ge38$ we have $(n,\omega_n)\notin\Pcyl$.
\end{theorem}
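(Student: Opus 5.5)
We construct $n+1$ lines tangent to the unit sphere whose pairwise distances are all at least $s_n:=2\sin\frac\pi n$. By the rescaling of Section~\ref{sec:reduction} (shrink the cylinders to their axes, scaled by $1/(1+\omega_n)$), this gives $n+1$ non-overlapping cylinders of radius $\omega_n$ touching $B$, so $(n,\omega_n)\notin\Pcyl$. The construction is the pinwheel. Put $m$ lines in an upper group and $m'$ in a lower group, with $m+m'=n+1$: $m=m'=(n+1)/2$ for odd $n$, and $m=n/2+1$, $m'=n/2$ (or one extra line placed suitably) for even $n$. A line of the upper group is tangent to the sphere at the point of latitude $\varphi$ and longitude $2\pi k/m$, and it is tilted by $\delta$ from the horizontal direction within the tangent plane. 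The lower group is the mirror image under a rotation by $\pi$ about a horizontal axis, staggered in longitude by half a step.

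First I would write the distance between two skew lines as $|\det(p_1-p_2,u_1,u_2)|/|u_1\times u_2|$. The rotational symmetry then reduces the pairwise distances to two one-parameter families: $\Dsame(j)$ for lines in the same group $j$ steps apart, and $\Dcross(j)$ for lines in opposite groups at longitude offset $(j+\frac12)\cdot2\pi/m$. Each is an explicit trigonometric expression in $\sin\varphi$, $\tan\delta$ and $\theta_j=2\pi j/m$. Next I would fix one rational pair $(\sin\varphi,\tan\delta)$ that works for all large $n$ at once. This works because as $n\to\infty$ the nearest-neighbour distances are governed by the step $2\pi/m\approx4\pi/n$, while the requirement is $s_n\approx2\pi/n$. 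Splitting the ring into two tilted layers roughly doubles the angular room, which is why insertion becomes easy for large $n$. Concretely, I would expand $\Dsame(1)$ and $\Dcross(0)$ to first order in $1/m$ and choose $\varphi,\delta$ so that both leading coefficients exceed $2\pi\cdot\frac{m}{n}\cdot\frac1{m}$ with some margin. The tilt is needed so that each line clears its neighbours in the other group.

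Then I would turn the asymptotic statement into an exact one valid for all $n$ in the stated range. Write $x=\cos(2\pi/m)$, $y=\sin(2\pi/m)$, or better use $t=\tan(\pi/(2m))$ to make everything rational. The squared distances $\Dsame(j)^2$ and $\Dcross(j)^2$ become rational functions of $t$ and of the fixed rationals. The condition $\Dsame(j)^2\ge s_n^2$ becomes a polynomial inequality. To handle $\sin(\pi/n)$, I would bound it above by $\pi/n$ and then by a rational upper bound for $\pi$, and bound $t$ from both sides by Taylor polynomials with rational coefficients. All inequalities then become sign conditions on explicit polynomials in $1/n$, which can be checked by comparing rational coefficients for $n\ge19$ (odd) or $n\ge38$ (even).

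The main obstacle is controlling all $j$ uniformly, not just nearest neighbours. For $j$ large the chord grows, so I would show that $\Dsame(j)$ and $\Dcross(j)$ are bounded below by a monotone function of $\min(\theta_j,2\pi-\theta_j)$. For example, I would bound the distance below by the distance between the tangency points' projections minus tilt corrections, so that only finitely many small $j$ need the sharp estimate. The borderline cases near the thresholds $n=19$ and $n=38$, where the margin is thinnest, I would check directly with exact rational arithmetic. The even case is worse because the groups are unbalanced, and I expect this to be the reason for the larger threshold.
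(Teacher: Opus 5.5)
Your overall plan is the paper's: a pinwheel with one fixed rational pair $(\sin\varphi,\tan\delta)$, same-group minimum at adjacent lines, Taylor bounds for large $n$, and exact checks near the thresholds. But the cross-group step fails as you set it up.

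\textbf{Cross-group pairs.} Your half-turn configuration is the paper's pinwheel with offset $\psi=\gamma/2$. Put $S=\sin\varphi$, $T=\tan\delta$ (tilt measured from the meridian) and $c_0=(1-S^2)(1+T^2)$. Then $\Dcross(\Delta)=2|w|/\sqrt{w^2+c_0}$ with $w=\tan\frac\Delta2+ST$. This vanishes at $\Delta_0=-2\arctan(ST)\neq0$. The tilt is needed because $\Dsame\propto T$, but it does not make lines clear the other group; it only moves the collision away from $\Delta=0$. Three consequences follow.
\begin{itemize}
\item The half-step pairs are harmless: as $n\to\infty$, $\Dcross(\gamma/2)\to2ST/\sqrt{S^2T^2+c_0}>0$, which is not $O(1/m)$.
\item $\Dcross$ has no lower bound that is monotone in the offset. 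The dangerous pairs have offset near $\Delta_0$, which is about $|\Delta_0|/\gamma$ lattice steps from $0$. That number grows linearly in $n$, so there is no reduction to finitely many small $j$.
\item With the stagger frozen at $\gamma/2$, whether the lattice misses the arc where $\Dcross<\tau^*$ (an arc of length close to $\gamma$) depends on the fractional part of $\Delta_0/\gamma$. This fails for many $n$. With the paper's values $S=\frac9{20}$, $T=\frac{31}{64}$ it fails already at $n=19$: the pair at offset $-\gamma/2$ is at distance about $0.12<2\sin\frac\pi{19}\approx0.33$.
\end{itemize}
The missing idea is Lemma~\ref{lem:arc}. The set $\{\Delta:\Dcross(\Delta)<\tau^*\}$ is a single open arc $J$. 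One imposes $|J|\le\gamma$, which is \eqref{eq:cross}, and takes the $N$-dependent offset $\psi^*$ of \eqref{eq:kappa}, which centers $J$ in a gap of the lattice.

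Switching to the reflection in the equator would make the cross distance even in $\Delta$, so a half-step stagger would then be natural. But that variant cannot beat the ring for large $n$. There $\Dsame(\gamma)\sim2\Phi\cdot\frac\gamma2$ and the nearest cross pairs are at distance $\sim K\cdot\frac\gamma2$, where $\Phi=\frac{(1-S^2)T}{\sqrt{(S^2+T^2)(1+T^2)}}$ and $K=\frac{S(1+T^2)}{\sqrt{1-S^2}\sqrt{S^2+T^2}}$. Both factors would essentially have to be at least $\frac Nn>1$. Yet with $u=S^2$, $v=T^2$ one has $(2\Phi K)^2=\frac{4u(1-u)v(1+v)}{(u+v)^2}\le1$, because $(u+v)^2-4u(1-u)v(1+v)=(u-v+2uv)^2$.

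\textbf{The even case.} With $m=\frac n2+1$ and $m'=\frac n2$, the cross-group offsets are $\psi+2\pi(jm-im')/(mm')$. Since $\gcd(m,m')=1$, they exhaust the progression $\psi+\frac{2\pi}{mm'}\Z$, whose mesh is below $8\pi/n^2$. One of them lies within $4\pi/n^2$ of $\Delta_0$, so for every $\psi$ some cross pair is at distance $O(n^{-2})$. The alternative ``one extra line placed suitably'' is not specified.

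The paper instead proves more: a balanced pinwheel of $N=n+2$ lines (two groups of $\frac n2+1$) at $\theta=\frac\pi n$, i.e.\ $s=2$ in Lemmas~\ref{lem:tailstar2} and~\ref{lem:tailarc}. The leading-order same-group condition becomes $2\Phi\ge\frac{n+2}n$ instead of $2\Phi\ge\frac{n+1}n$. For the frozen parameters $2\Phi\approx1.052$. This, not any imbalance of the groups, is why the even threshold $38$ is twice the odd threshold $19$.
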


The remaining $19$ values $n\in\{9,10,\dots,18\}\cup\{20,22,\dots,36\}$ are treated in
Section~\ref{sec:window} by explicit configurations with rational data.

To prove $(n,\omega_n)\in\Pcyl$ we use the two-point bound on the rotation group $\SO(3)$
\cite{SOV,VICM}. A finite list of rational data, called a \emph{certificate}, proves each
case $4\le n\le7$ of Theorem~\ref{thm:A}. The case $n=3$ follows from Theorem~\ref{thm:OS}, since
$\omega_3>1+\sqrt2$. The certificates and the configurations are checked in exact arithmetic by a
standalone verifier \cite{MRVer}. The proof of Theorem~\ref{thm:OS} is elementary and does not use a
computer.

\subsection{Organization} 
Section~\ref{sec:reduction} replaces the cylinders by lines tangent to the unit sphere.
Section~\ref{sec:OSC} contains the elementary proof of the Ogievetsky--Shlosman conjecture
(Theorem~\ref{thm:OS}). Sections~\ref{sec:pinwheel}--\ref{sec:window} construct $n+1$ cylinders of
radius $\omega_n$ for every $n\ge9$. This proves that insertion is possible for these $n$
(Theorem~\ref{thm:Ge9}). Sections~\ref{sec:DBSequence}--\ref{sec:cover} prove that insertion is
impossible for $3\le n\le7$ (Theorem~\ref{thm:A}). This part uses the two-point bound and rational
certificates. Section~\ref{sec:open} lists open problems, including the case $n=8$.
Appendix~\ref{app:data} describes the deposited data and the verifier.

\section{The configuration space and the separation parameter}\label{sec:reduction}

Fix $\omega>0$ and let $B$ be the unit ball centered at $O$. A cylinder of radius $\omega$
touches $B$ if and only if its axis is tangent to the sphere of radius $1+\omega$ centered at $O$.
Two cylinders of radius $\omega$ are non-overlapping if and only if their axes are at distance at
least $2\omega$. The homothety with center $O$ and ratio $\frac1{1+\omega}$ maps the axes to lines
tangent to the unit sphere $\Sph^2$ and multiplies all distances by $\frac1{1+\omega}$. This
proves the following lemma, which we use throughout Sections~\ref{sec:pinwheel}--\ref{sec:window}.

\begin{lemma}\label{lem:dictionary}
Let $N\ge2$ and $\omega>0$. There exist $N$ pairwise non-overlapping cylinders of radius
$\omega$ touching $B$ if and only if there exist $N$ lines tangent to $\Sph^2$
with pairwise distances at least
$
\tau(\omega)=\frac{2\omega}{1+\omega}$.
\end{lemma}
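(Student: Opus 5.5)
The plan is to establish the two geometric equivalences stated just before the lemma, and then transport the configuration by the homothety $h$ with center $O$ and ratio $\frac1{1+\omega}$. Throughout, a cylinder of radius $\omega$ is the set $C(\ell)=\{x:\dist(x,\ell)\le\omega\}$ of points within $\omega$ of its axis $\ell$.

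\emph{Step 1: touching.} Let $d=\dist(O,\ell)$. Then $\dist(O,C(\ell))=\max(d-\omega,0)$, since the nearest point of $\ell$ to $O$ can be moved toward $O$ by up to $\omega$ while staying in $C(\ell)$. The cylinder meets $B$ if and only if $d\le 1+\omega$. It avoids the interior of $B$ if and only if every point of $C(\ell)$ has norm at least $1$, that is, if and only if $d-\omega\ge1$. Both conditions hold exactly when $d=1+\omega$, which says that $\ell$ is tangent to the sphere of radius $1+\omega$ about $O$.

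\emph{Step 2: non-overlap.} Take axes $\ell_1,\ell_2$ at distance $D$, attained by points $p_1\in\ell_1$ and $p_2\in\ell_2$. If the lines are parallel, any common perpendicular works.
\begin{itemize}[leftmargin=*]
\item If $D<2\omega$, the midpoint of $p_1p_2$ lies at distance $D/2<\omega$ from both axes, so it is an interior point of both cylinders and they overlap.
\item If $D\ge2\omega$ and some $x$ lay in both interiors, then $\dist(x,\ell_i)<\omega$ for $i=1,2$. The triangle inequality would give points of $\ell_1$ and $\ell_2$ at distance less than $2\omega$, a contradiction.
\end{itemize}
Hence the cylinders are non-overlapping if and only if $D\ge 2\omega$.

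\emph{Step 3: rescaling.} The homothety $h$ maps lines to lines, scales every distance by $\frac1{1+\omega}$, and maps the sphere of radius $1+\omega$ onto $\Sph^2$, preserving tangency because it scales $\dist(O,\ell)$. So $N$ axes tangent to the big sphere with pairwise distances at least $2\omega$ correspond bijectively, via $h$ and $h^{-1}$, to $N$ lines tangent to $\Sph^2$ with pairwise distances at least $\frac{2\omega}{1+\omega}=\tau(\omega)$. Combining Steps 1 and 2 then gives both directions of the lemma.

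No step is a real obstacle. The only points needing care are the formula for $\dist(O,C(\ell))$ in Step 1, which matters in the degenerate case $d\le\omega$, and handling parallel axes in Step 2, where a common perpendicular replaces the unique closest pair.
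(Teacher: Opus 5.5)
Your proposal is correct and follows the paper's argument. The paper's proof states without justification that touching means the axis is tangent to the sphere of radius $1+\omega$, and that non-overlap means the axes are at distance at least $2\omega$; it then applies the homothety of ratio $\frac{1}{1+\omega}$. You supply proofs of those two equivalences. One small fill-in for Step 1: your sentence there only justifies $\dist(O,C(\ell))\le\max(d-\omega,0)$, and the reverse inequality follows from the triangle inequality, since $\|x\|\ge d-\dist(x,\ell)\ge d-\omega$ for every $x\in C(\ell)$.
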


The map $\omega\mapsto\tau(\omega)$ is an increasing bijection of $(0,\infty)$ onto $(0,2)$. So for
$\omega>0$ and $\tau\in(0,2)$,
\begin{align}\label{eq:tauDef}
\tau=\frac{2\omega}{1+\omega}\quad\Longleftrightarrow\quad\omega=\frac{\tau}{2-\tau}.
\end{align}
For $\tau\in(0,2)$ we define
\begin{align}\label{eqn:NmaxFirstAppearance}
N_{\max}(\tau)=\mathcal N\left(\frac{\tau}{2-\tau}\right).
\end{align}

\begin{remark}\label{rem:ringTau}
Since $\frac{\omega_n}{1+\omega_n}=\sin\frac\pi n$, the ring of $n$ cylinders has separation
$\tau(\omega_n)=2\sin\frac\pi n$. This is the edge length of a regular $n$-gon inscribed in the unit
circle. Hence $N_{\max}(2\sin\frac\pi n)\ge n$.
\end{remark}

Let us recall the standard distance formula between lines (see, e.g., \cite[Section~2]{OSsix}).

\begin{lemma}\label{lem:linedist}
Let $\ell_i=\{\overrightarrow{p_i}+t \overrightarrow{v_i}:t\in\R\}$ for $i=1,2$. Then
\begin{align*}
\dist(\ell_1,\ell_2)=
\begin{cases}
\bigl| (\overrightarrow{p_2}-\overrightarrow{p_1})\cdot (\overrightarrow{v_1}\times \overrightarrow{v_2}) \bigr|\,/\,\|\overrightarrow{v_1}\times \overrightarrow{v_2}\|, & \text{if }\overrightarrow{v_1}\nparallel\overrightarrow{v_2},\\[3pt]
\bigl\|(\overrightarrow{p_2}-\overrightarrow{p_1})\times\overrightarrow{v_1}\bigr\|\,/\,\|\overrightarrow{v_1}\|, & \text{if }\overrightarrow{v_1}\parallel\overrightarrow{v_2}.
\end{cases}
\end{align*}
\end{lemma}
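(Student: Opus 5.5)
The formula is the standard distance between two lines, so the plan is to treat the skew and parallel cases separately and reduce each to an orthogonal projection onto a single direction. Put $\overrightarrow w=\overrightarrow{p_2}-\overrightarrow{p_1}$. Points of the two lines are $\overrightarrow{p_1}+s\overrightarrow{v_1}$ and $\overrightarrow{p_2}+t\overrightarrow{v_2}$, so
\begin{align*}
\dist(\ell_1,\ell_2)=\inf_{s,t\in\R}\bigl\|\overrightarrow w+t\overrightarrow{v_2}-s\overrightarrow{v_1}\bigr\| .
\end{align*}
This is the distance from $-\overrightarrow w$ to the linear subspace $V=\operatorname{span}(\overrightarrow{v_1},\overrightarrow{v_2})$. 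Equivalently, it is the norm of the component of $\overrightarrow w$ orthogonal to $V$, and the infimum is attained.

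\emph{Nonparallel case.} When $\overrightarrow{v_1}\nparallel\overrightarrow{v_2}$, the subspace $V$ is a plane, and its orthogonal complement is spanned by the nonzero vector $\overrightarrow n=\overrightarrow{v_1}\times\overrightarrow{v_2}$. The component of $\overrightarrow w$ orthogonal to $V$ is therefore its projection onto $\overrightarrow n$. That projection has length $|\overrightarrow w\cdot\overrightarrow n|/\|\overrightarrow n\|$, which is the first formula.

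\emph{Parallel case.} When $\overrightarrow{v_1}\parallel\overrightarrow{v_2}$, we have $V=\R\overrightarrow{v_1}$. The orthogonal component of $\overrightarrow w$ is then $\overrightarrow w-\frac{\overrightarrow w\cdot\overrightarrow{v_1}}{\|\overrightarrow{v_1}\|^2}\overrightarrow{v_1}$. Its norm equals $\|\overrightarrow w\|\sin\theta$, where $\theta$ is the angle between $\overrightarrow w$ and $\overrightarrow{v_1}$. By the identity $\|\overrightarrow w\times\overrightarrow{v_1}\|=\|\overrightarrow w\|\,\|\overrightarrow{v_1}\|\sin\theta$, this norm is $\|\overrightarrow w\times\overrightarrow{v_1}\|/\|\overrightarrow{v_1}\|$, the second formula.

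The only point needing any care is the reduction from distance between lines to projection onto $V^\perp$. It rests on the Pythagorean decomposition $\overrightarrow w=\overrightarrow w_V+\overrightarrow w_\perp$. For any $\overrightarrow u\in V$ this gives $\|\overrightarrow w+\overrightarrow u\|^2=\|\overrightarrow w_V+\overrightarrow u\|^2+\|\overrightarrow w_\perp\|^2$, which is minimized by taking $\overrightarrow u=-\overrightarrow w_V$. With that step in place, both cases follow immediately from the identities above.
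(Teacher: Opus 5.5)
Your proof is correct. The paper does not prove this lemma at all: it recalls it as the standard distance formula and refers to \cite[Section~2]{OSsix}. Your projection argument therefore supplies the omitted details, and it matches how the paper uses the formula later. Your starting expression $\inf_{s,t}\|\overrightarrow w+t\overrightarrow{v_2}-s\overrightarrow{v_1}\|$ is the form the paper writes in Lemma~\ref{lem:usc}. Your orthogonal-component expression $\overrightarrow w-\frac{\overrightarrow w\cdot\overrightarrow{v_1}}{\|\overrightarrow{v_1}\|^2}\overrightarrow{v_1}$ is the parallel-case formula in Lemma~\ref{lem:ratdist}.

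There is one small point to tidy. The step $\|\overrightarrow w_\perp\|=\|\overrightarrow w\|\sin\theta$ is undefined when $\overrightarrow w=0$, that is, when the two lines share a base point. You can avoid this by comparing squares directly:
\[
\|\overrightarrow w_\perp\|^2=\|\overrightarrow w\|^2-\frac{(\overrightarrow w\cdot\overrightarrow{v_1})^2}{\|\overrightarrow{v_1}\|^2},
\qquad
\|\overrightarrow w\times\overrightarrow{v_1}\|^2=\|\overrightarrow w\|^2\|\overrightarrow{v_1}\|^2-(\overrightarrow w\cdot\overrightarrow{v_1})^2 .
\]
The second of these is the identity the paper itself invokes just before Lemma~\ref{lem:ratdist}.

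You also assume $\overrightarrow{v_1},\overrightarrow{v_2}\neq0$ throughout. That is implicit in calling $\ell_i$ a line, but it is worth saying once.
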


Let $\overrightarrow{e_1}$, $\overrightarrow{e_2}$, and $\overrightarrow{e_3}$ be the standard basis vectors of $\R^3$.
Let $\ell_0=\{\overrightarrow{e_3}+t\overrightarrow{e_1}:t\in\R\}$ be the line tangent to
$\Sph^2$ at $\overrightarrow{e_3}$ with direction $\overrightarrow{e_1}$. The rotation group
$\SO(3)$ acts transitively on lines tangent to $\Sph^2$, so every such line is $g\cdot\ell_0$ for
some $g\in\SO(3)$, unique up to the stabilizer of $\ell_0$.

\begin{definition}\label{def:D}
For $\tau\in(0,2)$ we define the \emph{safe zone} $\Dom(\tau)$ as the subset of $\SO(3)$ given by
\[ \Dom(\tau)=\{g\in\SO(3):\dist(\ell_0,\,g\cdot\ell_0)\ge\tau\}.\]
\end{definition}

When $\tau$ is fixed we write $\Dom$ for $\Dom(\tau)$.

\begin{definition}\label{def:nonOverlappingX}
A set $X\subseteq \SO(3)$ will be called \emph{non-overlapping} if
$g^{-1}g'\in\Dom$ whenever $g$ and $g'$ are two distinct elements of $X$.
\end{definition}

By \eqref{eqn:NmaxFirstAppearance} and Lemma~\ref{lem:dictionary}, $N_{\max}(\tau)$ is the
largest number of lines tangent to $\Sph^2$ with pairwise distances at least $\tau$. Since
rotations are isometries, $\{g_1,\dots,g_N\}\subset\SO(3)$ is non-overlapping exactly when the
lines $g_1\cdot\ell_0,\dots,g_N\cdot\ell_0$ are distinct with pairwise distances at least
$\tau$. Hence $N_{\max}(\tau)$ is also the largest cardinality of a non-overlapping subset of
$\SO(3)$.

\begin{lemma}\label{lem:monotone}
If $0<\tau'\le\tau<2$, then $\Dom(\tau)\subseteq\Dom(\tau')$, and consequently
$N_{\max}(\tau)\le N_{\max}(\tau')$.
\end{lemma}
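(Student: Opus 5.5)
The first claim follows directly from Definition~\ref{def:D}. Take $g\in\Dom(\tau)$. Then $\dist(\ell_0,g\cdot\ell_0)\ge\tau\ge\tau'$, so $g\in\Dom(\tau')$. Hence $\Dom(\tau)\subseteq\Dom(\tau')$.

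For the second claim I would use the characterization recorded just before the lemma. There $N_{\max}(\tau)$ is identified with the largest cardinality of a non-overlapping subset of $\SO(3)$, where non-overlapping is taken with respect to $\Dom=\Dom(\tau)$ in Definition~\ref{def:nonOverlappingX}.

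Let $X\subseteq\SO(3)$ be non-overlapping for $\Dom(\tau)$. For distinct $g,g'\in X$ we have $g^{-1}g'\in\Dom(\tau)\subseteq\Dom(\tau')$. So $X$ is also non-overlapping for $\Dom(\tau')$, and taking suprema over $X$ gives $N_{\max}(\tau)\le N_{\max}(\tau')$. Equivalently, in the language of lines, $N$ tangent lines with pairwise distances at least $\tau$ also have pairwise distances at least $\tau'$.

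The one point to watch is whether $N_{\max}(\tau)$ is finite and attained. It is not needed here, because the inequality between suprema holds in $\mathbb N\cup\{\infty\}$.

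To also recover Lemma~\ref{lem:Pcylmono}, I would translate through \eqref{eq:tauDef}, using that $\omega\mapsto\tau(\omega)$ is increasing. Let $\omega'\ge\omega$. The pair $(n,\omega)$ lies in $\Pcyl$ exactly when $N_{\max}(\tau(\omega))\le n$. Since $\tau(\omega')\ge\tau(\omega)$, this gives $N_{\max}(\tau(\omega'))\le N_{\max}(\tau(\omega))\le n\le n'$.
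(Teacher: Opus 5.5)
Your proposal is correct and follows the paper's own proof: the inclusion comes straight from Definition~\ref{def:D}, and a set that is non-overlapping for $\tau$ is then non-overlapping for $\tau'$, which gives $N_{\max}(\tau)\le N_{\max}(\tau')$. Your extra remarks, that finiteness is not needed and the translation to Lemma~\ref{lem:Pcylmono} via \eqref{eq:tauDef}, are also sound.
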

\begin{proof}
The inclusion is immediate from Definition~\ref{def:D}. Hence a set that is non-overlapping for
$\tau$ is non-overlapping for $\tau'$, so $N_{\max}(\tau)\le N_{\max}(\tau')$.
\end{proof}

\begin{lemma}\label{lem:usc}
The function $g\mapsto\dist(\ell_0,g\cdot\ell_0)$ is upper semicontinuous on $\SO(3)$.
\end{lemma}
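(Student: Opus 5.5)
We have to show that $f(g)=\dist(\ell_0,g\cdot\ell_0)$ is upper semicontinuous on $\SO(3)$. The natural route is to write $f$ as an infimum of continuous functions, since an infimum of any family of continuous functions is upper semicontinuous. The distance between two lines is
\[
\dist(\ell_0,g\cdot\ell_0)=\inf_{s,t\in\R}\bigl\|(\overrightarrow{e_3}+s\overrightarrow{e_1})-g(\overrightarrow{e_3}+t\overrightarrow{e_1})\bigr\|,
\]
because $g\cdot\ell_0=\{g\overrightarrow{e_3}+t\,g\overrightarrow{e_1}:t\in\R\}$. For fixed $(s,t)$ the map $g\mapsto\|(\overrightarrow{e_3}+s\overrightarrow{e_1})-g(\overrightarrow{e_3}+t\overrightarrow{e_1})\|$ is continuous on $\SO(3)$, since matrix multiplication is continuous and the norm is continuous.

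The remaining step is the general fact. Let $f=\inf_\alpha f_\alpha$ with each $f_\alpha$ continuous. For any $c\in\R$,
\[
\{g:f(g)<c\}=\bigcup_\alpha\{g:f_\alpha(g)<c\},
\]
and this is a union of open sets, hence open. That is exactly upper semicontinuity of $f$. Equivalently, fix $g_0$ and $\varepsilon>0$, and choose $(s,t)$ with $f_{s,t}(g_0)<f(g_0)+\varepsilon$. By continuity of $f_{s,t}$, every $g$ near $g_0$ satisfies $f(g)\le f_{s,t}(g)<f(g_0)+2\varepsilon$, so $\limsup_{g\to g_0}f(g)\le f(g_0)$.

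No step is a real obstacle. The one point to note is why the statement is only upper and not full semicontinuity: $f$ genuinely fails to be continuous at rotations where $g\cdot\ell_0$ becomes parallel to $\ell_0$. There the cross product in Lemma~\ref{lem:linedist} vanishes and the value can drop, since nearby skew lines have distance given by the first formula, which need not converge to the parallel-line formula. For this reason the proof goes through the infimum characterization rather than the explicit formula of Lemma~\ref{lem:linedist}. The consequence used later is that $\Dom(\tau)=\{f\ge\tau\}$ is closed, hence compact in $\SO(3)$.
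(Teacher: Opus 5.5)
Your proof is correct and follows the paper's argument: write $\dist(\ell_0,g\cdot\ell_0)$ as an infimum over $(s,t)$ of the continuous functions $g\mapsto\bigl\|\overrightarrow{e_3}+s\overrightarrow{e_1}-g(\overrightarrow{e_3}+t\overrightarrow{e_1})\bigr\|$, and use that an infimum of continuous functions is upper semicontinuous. Your remark that the function can drop at parallel configurations is also right. For example, tangent lines in the plane $y=0$ near the tangent at $-\overrightarrow{e_3}$ meet $\ell_0$, so the distance jumps from $2$ to $0$; this is why only upper semicontinuity holds.
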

\begin{proof}
By definition
$\dist(\ell_0,g\cdot\ell_0)=\inf_{s,t\in\R}\bigl\|\overrightarrow{e_3}+t\overrightarrow{e_1}-g\cdot \overrightarrow{e_3}-s\,g\cdot \overrightarrow{e_1}\bigr\|$,
an infimum of a family of continuous functions of $g$, hence upper semicontinuous.
\end{proof}

\begin{corollary}\label{cor:attain}
For every integer $N\ge2$ the number
$\tau^*(N)=\sup\{\tau\in(0,2):N_{\max}(\tau)\ge N\}$
is attained, in the sense that $N_{\max}(\tau)\ge N$ holds if and only if $\tau\le\tau^*(N)$.
Consequently $R_N=\frac{\tau^*(N)}{2-\tau^*(N)}$ and $\mathcal N(\omega)\ge N$ if and only if
$\omega\le R_N$.
\end{corollary}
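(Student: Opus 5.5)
The plan is a compactness argument in $\SO(3)^N$, using the upper semicontinuity of Lemma~\ref{lem:usc}. The set $T=\{\tau\in(0,2):N_{\max}(\tau)\ge N\}$ is downward closed by Lemma~\ref{lem:monotone}. It is nonempty: for small $\tau$, take $N$ parallel tangent lines in one plane through the origin, touching the great circle at points spaced apart. Hence $T$ is an interval with left endpoint $0$, and $\tau^*(N)=\sup T\le 2$. The claim amounts to two facts. First, $\tau^*(N)<2$. Second, $\tau^*(N)\in T$, so that the sup is a maximum.

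\emph{Closedness of the supremum.} Take $\tau_k\uparrow\tau^*$ with $\tau_k\in T$. Choose non-overlapping sets $\{g^{(k)}_1,\dots,g^{(k)}_N\}$ for $\tau_k$. Since $\SO(3)^N$ is compact, pass to a subsequence converging to some $(g_1,\dots,g_N)$. For $i\ne j$, the function $(g,g')\mapsto\dist(g\cdot\ell_0,g'\cdot\ell_0)=\dist(\ell_0,g^{-1}g'\cdot\ell_0)$ is upper semicontinuous by Lemma~\ref{lem:usc}. Therefore
\[
\dist(g_i\cdot\ell_0,g_j\cdot\ell_0)\ge\limsup_k\dist\bigl(g^{(k)}_i\cdot\ell_0,g^{(k)}_j\cdot\ell_0\bigr)\ge\lim_k\tau_k=\tau^*.
\]
Because $\tau^*>0$, the limit lines are pairwise distinct. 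So $N_{\max}(\tau^*)\ge N$, provided $\tau^*<2$ so that $\tau^*$ lies in the domain $(0,2)$. The same estimate shows $\tau^*<2$. Two tangent lines to $\Sph^2$ lie within the ball of radius $\sqrt2$ near their closest points, which gives $\dist\le 2$ for any two tangent lines. Equality $2$ forces the lines to be tangent at antipodal points with perpendicular directions. Three tangent lines cannot be pairwise at distance $2$: the tangency points would have to be pairwise antipodal, which is impossible. So for $N\ge3$, the limit argument shows that $\tau^*=2$ cannot occur. For $N=2$ one would need $\tau^*(2)=2$ to be attained, and here the domain $(0,2)$ is the delicate point. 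I expect this boundary bookkeeping to be the main nuisance. I would handle it by the convention that $R_2=\infty$, or by restricting to $N\ge3$.

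\emph{Translation.} If $\tau\le\tau^*$, then $N_{\max}(\tau)\ge N_{\max}(\tau^*)\ge N$ by Lemma~\ref{lem:monotone}. Conversely, $\tau>\tau^*$ implies $\tau\notin T$ by the definition of the supremum. Now apply the increasing bijection \eqref{eq:tauDef} and the definition \eqref{eqn:NmaxFirstAppearance}. These give $\mathcal N(\omega)\ge N$ if and only if $\tau(\omega)\le\tau^*$, that is, if and only if $\omega\le\tau^*/(2-\tau^*)$. Finally, $\mathcal N(\omega)\ge N$ means $(N-1,\omega)\notin\Pcyl$, by the definition of $\mathcal N$ together with Lemma~\ref{lem:Pcylmono}. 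Hence the largest such $\omega$ is $R_N=\tau^*/(2-\tau^*)$.
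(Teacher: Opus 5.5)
Your proof is correct and is essentially the paper's argument: the paper lets the upper semicontinuous function $\Phi(g_1,\dots,g_N)=\min_{i\ne j}\dist(\ell_0,g_i^{-1}g_j\cdot\ell_0)$ attain its maximum $\hat\tau$ on the compact set $\SO(3)^N$ and shows $\tau^*(N)=\hat\tau$. That is your maximizing-sequence-plus-subsequence argument in packaged form. Your extra checks that $\tau^*(N)<2$ for $N\ge3$, and that $N=2$ is degenerate ($\tau^*(2)=2$, so only $R_2=\infty$ makes sense), cover points the paper leaves implicit. One small correction: distance $2$ forces antipodal tangency points but not perpendicular directions, since any two lines tangent at antipodal points are at distance exactly $2$. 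Your argument does not use that claim.
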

\begin{proof}
Let $\Phi(g_1,\dots,g_N)=\min_{i\ne j}\dist(\ell_0,g_i^{-1}g_j\cdot\ell_0)$. Each of the finitely
many entries is upper semicontinuous by Lemma~\ref{lem:usc}, and a minimum of finitely many
upper semicontinuous functions is upper semicontinuous. As $\SO(3)^N$ is compact, $\Phi$ attains
its maximum, say at $(\hat g_1,\dots,\hat g_N)$. Write $\hat\tau=\Phi(\hat g_1,\dots,\hat g_N)$.

If $\tau\le\hat\tau$ then $\{\hat g_1,\dots,\hat g_N\}$ is non-overlapping for $\tau$. Its
elements are distinct because $\tau>0$ and $\dist(\ell_0,\ell_0)=0$. So $N_{\max}(\tau)\ge N$.
If $\tau>\hat\tau$ and $X=\{g_1,\dots,g_N\}$ were non-overlapping for $\tau$, then
$\Phi(g_1,\dots,g_N)\ge\tau>\hat\tau$, contradicting maximality. Hence $\tau^*(N)=\hat\tau$ and the
stated equivalence holds. The last assertion follows by applying the increasing bijection
\eqref{eq:tauDef}.
\end{proof}

\section{Proof of the Ogievetsky--Shlosman conjecture}\label{sec:OSC}

Let $\ell$ be a line tangent to the unit sphere. Let $\overrightarrow{p}$ be the point of tangency,
let $\overrightarrow{v}$ be a unit vector parallel to $\ell$, and let
$\overrightarrow{w}=\overrightarrow{p}\times\overrightarrow{v}$. We call
$(\overrightarrow{p},\overrightarrow{v},\overrightarrow{w})$ an \emph{orthonormal system associated to}
$\ell$. The only other system associated to $\ell$ is
$(\overrightarrow{p},-\overrightarrow{v},-\overrightarrow{w})$. Passing from one to the other is called
\emph{reversing the direction} of $\ell$.

\begin{lemma}\label{lem:OSIdentity}
Let $\ell$ and $\ell'$ be lines tangent to the unit sphere. Let
$(\overrightarrow{p},\overrightarrow{v},\overrightarrow{w})$ and
$(\overrightarrow{p'},\overrightarrow{v'},\overrightarrow{w'})$ be their associated orthonormal systems.
Then
\begin{align}
2\left(\overrightarrow{p}\cdot\overrightarrow{p'}\right)^2-2\left(\overrightarrow{w}\cdot\overrightarrow{w'}\right)^2
&=\left(\overrightarrow{v}\cdot\overrightarrow{w'}-\overrightarrow{w}\cdot\overrightarrow{v'}\right)^2
+\left(\left(\overrightarrow{p'}-\overrightarrow{p}\right)\cdot\left(\overrightarrow{v}\times\overrightarrow{v'}\right)\right)^2
-2\left\|\overrightarrow{v}\times\overrightarrow{v'}\right\|^2. \label{eqn:OSIdentity}
\end{align}
\end{lemma}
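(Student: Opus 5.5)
The plan is a direct computation in the two orthonormal frames. Let $A$ be the $3\times3$ matrix whose rows are indexed by $\overrightarrow{p},\overrightarrow{v},\overrightarrow{w}$, whose columns are indexed by $\overrightarrow{p'},\overrightarrow{v'},\overrightarrow{w'}$, and whose entries are the corresponding dot products. Since both systems are orthonormal, $A$ is an orthogonal matrix, so every row and every column of $A$ has squared norm $1$. Both systems are right-handed by construction ($\overrightarrow{w}=\overrightarrow{p}\times\overrightarrow{v}$), so we also have the cyclic relations $\overrightarrow{v}\times\overrightarrow{w}=\overrightarrow{p}$ and $\overrightarrow{w}\times\overrightarrow{p}=\overrightarrow{v}$, and likewise for the primed system. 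Everything will be reduced to entries of $A$.

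First I rewrite the triple product on the right-hand side. Using $\det(\overrightarrow{a},\overrightarrow{b},\overrightarrow{c})=(\overrightarrow{a}\times\overrightarrow{b})\cdot\overrightarrow{c}$ and its cyclic invariance,
\[
\overrightarrow{p}\cdot(\overrightarrow{v}\times\overrightarrow{v'})=(\overrightarrow{p}\times\overrightarrow{v})\cdot\overrightarrow{v'}=\overrightarrow{w}\cdot\overrightarrow{v'}
\quad\text{and}\quad
\overrightarrow{p'}\cdot(\overrightarrow{v}\times\overrightarrow{v'})=\overrightarrow{v}\cdot(\overrightarrow{v'}\times\overrightarrow{p'})=-\overrightarrow{v}\cdot\overrightarrow{w'}.
\]
Hence
\[
(\overrightarrow{p'}-\overrightarrow{p})\cdot(\overrightarrow{v}\times\overrightarrow{v'})=-\bigl(\overrightarrow{v}\cdot\overrightarrow{w'}+\overrightarrow{w}\cdot\overrightarrow{v'}\bigr).
\]
I also use $\|\overrightarrow{v}\times\overrightarrow{v'}\|^2=1-(\overrightarrow{v}\cdot\overrightarrow{v'})^2$.

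Substituting these into the right-hand side, the cross terms of the two squares cancel. The right-hand side becomes
\[
2(\overrightarrow{v}\cdot\overrightarrow{w'})^2+2(\overrightarrow{w}\cdot\overrightarrow{v'})^2+2(\overrightarrow{v}\cdot\overrightarrow{v'})^2-2.
\]
The column of $A$ indexed by $\overrightarrow{v'}$ has unit norm, so $(\overrightarrow{v}\cdot\overrightarrow{v'})^2+(\overrightarrow{w}\cdot\overrightarrow{v'})^2-1=-(\overrightarrow{p}\cdot\overrightarrow{v'})^2$. The right-hand side therefore equals
\[
2(\overrightarrow{v}\cdot\overrightarrow{w'})^2-2(\overrightarrow{p}\cdot\overrightarrow{v'})^2.
\]
It remains to prove
\[
(\overrightarrow{p}\cdot\overrightarrow{p'})^2+(\overrightarrow{p}\cdot\overrightarrow{v'})^2=(\overrightarrow{v}\cdot\overrightarrow{w'})^2+(\overrightarrow{w}\cdot\overrightarrow{w'})^2.
\]
This follows by subtracting the unit-norm identity for the column of $A$ indexed by $\overrightarrow{w'}$ from the unit-norm identity for the row indexed by $\overrightarrow{p}$: the common term $(\overrightarrow{p}\cdot\overrightarrow{w'})^2$ cancels.

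The only delicate step is the sign and orientation bookkeeping in the two triple products. It relies on both frames being right-handed, and an error there would make the cross terms add instead of cancel. I will double-check these two evaluations against the case $\ell'=\ell$, where $A$ is the identity: then the right-hand side is $0+0-0$ and the left-hand side is $2-2=0$. I will also check the case where $\ell'$ is obtained from $\ell$ by reversing the direction, where both sides are again $0$.
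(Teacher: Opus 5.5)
Your proof is correct and is essentially the paper's argument without coordinates: the paper rotates so that $(\overrightarrow{p'},\overrightarrow{v'},\overrightarrow{w'})=(\overrightarrow{e_1},\overrightarrow{e_2},\overrightarrow{e_3})$, which makes the coordinates of $\overrightarrow{p},\overrightarrow{v},\overrightarrow{w}$ exactly the entries of your matrix $A$. It then evaluates the triple product as $-(\overrightarrow{v}\cdot\overrightarrow{w'}+\overrightarrow{w}\cdot\overrightarrow{v'})$ just as you do, and finishes with unit-norm relations for a row and a column of $A$ (the column of $\overrightarrow{p'}$ and the row of $\overrightarrow{w}$, where you use the column of $\overrightarrow{v'}$, the row of $\overrightarrow{p}$ and the column of $\overrightarrow{w'}$), so the difference is only bookkeeping.
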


\begin{proof}
Both sides of \eqref{eqn:OSIdentity} are invariant under rotations. Hence we may assume that
$\overrightarrow{p'}$, $\overrightarrow{v'}$, and $\overrightarrow{w'}$ are the coordinate vectors
$\overrightarrow{e_1}$, $\overrightarrow{e_2}$, and $\overrightarrow{e_3}$. Let
$\overrightarrow{p}=\langle p_1,p_2,p_3\rangle$, $\overrightarrow{v}=\langle v_1,v_2,v_3\rangle$, and
$\overrightarrow{w}=\langle w_1,w_2,w_3\rangle$. Then
$\overrightarrow{v}\times\overrightarrow{e_2}=\langle -v_3,0,v_1\rangle$. The second coordinate of
$\overrightarrow{p}\times\overrightarrow{v}$ is $w_2=p_3v_1-p_1v_3$. Therefore
$\left(\overrightarrow{e_1}-\overrightarrow{p}\right)\cdot\left(\overrightarrow{v}\times\overrightarrow{e_2}\right)=-(v_3+w_2)$.
The right-hand side of \eqref{eqn:OSIdentity} becomes
\[
(v_3-w_2)^2+(v_3+w_2)^2-2\left(v_1^2+v_3^2\right)=2w_2^2-2v_1^2.
\]
The coordinates of $\overrightarrow{e_1}$ in the orthonormal basis
$\{\overrightarrow{p},\overrightarrow{v},\overrightarrow{w}\}$ are $p_1$, $v_1$, and $w_1$. Hence
$p_1^2+v_1^2+w_1^2=1$. Since $\overrightarrow{w}$ is a unit vector, also $w_1^2+w_2^2+w_3^2=1$.
Subtracting the two equations gives $w_2^2-v_1^2=p_1^2-w_3^2$. This proves
\eqref{eqn:OSIdentity}, because $\overrightarrow{p}\cdot\overrightarrow{p'}=p_1$ and
$\overrightarrow{w}\cdot\overrightarrow{w'}=w_3$.
\end{proof}

\begin{lemma}\label{lem:OSLemma}
Let $\ell$ and $\ell'$ be lines tangent to the unit sphere. Let
$(\overrightarrow{p},\overrightarrow{v},\overrightarrow{w})$ and
$(\overrightarrow{p'},\overrightarrow{v'},\overrightarrow{w'})$ be their associated orthonormal systems.
If $\dist(\ell,\ell')\ge\sqrt2$, then $\overrightarrow{p}\cdot\overrightarrow{p'}\le0$ and
$\left|\overrightarrow{w}\cdot\overrightarrow{w'}\right|\le-\overrightarrow{p}\cdot\overrightarrow{p'}$.
The last inequality is an equality if and only if the following two conditions hold.
\begin{enumerate}[label=(\roman*)]
\item $\overrightarrow{v}\cdot\overrightarrow{w'}=\overrightarrow{w}\cdot\overrightarrow{v'}$.
\item $\ell\|\ell'$ or $\dist(\ell,\ell')=\sqrt2$.
\end{enumerate}
\end{lemma}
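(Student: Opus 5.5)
The plan is to read everything off the identity of Lemma~\ref{lem:OSIdentity}. I will show that its right-hand side is a sum of two nonnegative terms whenever $\dist(\ell,\ell')\ge\sqrt2$, and that these two terms vanish exactly under conditions (i) and (ii).

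First, the sign of $\overrightarrow{p}\cdot\overrightarrow{p'}$. The tangency points $\overrightarrow{p}\in\ell$ and $\overrightarrow{p'}\in\ell'$ satisfy $\|\overrightarrow{p}-\overrightarrow{p'}\|\ge\dist(\ell,\ell')\ge\sqrt2$. Since both are unit vectors, $\|\overrightarrow{p}-\overrightarrow{p'}\|^2=2-2\,\overrightarrow{p}\cdot\overrightarrow{p'}$. Hence $2-2\,\overrightarrow{p}\cdot\overrightarrow{p'}\ge2$, which gives $\overrightarrow{p}\cdot\overrightarrow{p'}\le0$.

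Next, the right-hand side of \eqref{eqn:OSIdentity}. Its first summand $\bigl(\overrightarrow{v}\cdot\overrightarrow{w'}-\overrightarrow{w}\cdot\overrightarrow{v'}\bigr)^2$ is always nonnegative. Call the remaining part
\[
T=\bigl((\overrightarrow{p'}-\overrightarrow{p})\cdot(\overrightarrow{v}\times\overrightarrow{v'})\bigr)^2-2\|\overrightarrow{v}\times\overrightarrow{v'}\|^2 .
\]
I split into two cases.
\begin{enumerate}[label=(\alph*)]
\item If $\ell\parallel\ell'$, then $\overrightarrow{v}\times\overrightarrow{v'}=0$, so $T=0$.
\item If $\ell\nparallel\ell'$, the first case of Lemma~\ref{lem:linedist}, with $\overrightarrow{v},\overrightarrow{v'}$ as direction vectors and $\overrightarrow{p},\overrightarrow{p'}$ as base points, gives
\[
T=\|\overrightarrow{v}\times\overrightarrow{v'}\|^2\bigl(\dist(\ell,\ell')^2-2\bigr).
\]
This is $\ge0$, because $\dist(\ell,\ell')\ge\sqrt2$. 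Since $\|\overrightarrow{v}\times\overrightarrow{v'}\|\ne0$, it equals $0$ if and only if $\dist(\ell,\ell')=\sqrt2$.
\end{enumerate}
In both cases $T\ge0$, and $T=0$ exactly when condition (ii) holds.

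Therefore $2(\overrightarrow{p}\cdot\overrightarrow{p'})^2-2(\overrightarrow{w}\cdot\overrightarrow{w'})^2\ge0$, so $|\overrightarrow{w}\cdot\overrightarrow{w'}|\le|\overrightarrow{p}\cdot\overrightarrow{p'}|=-\overrightarrow{p}\cdot\overrightarrow{p'}$, using the sign established above.

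For the equality case: since both sides are nonnegative, $|\overrightarrow{w}\cdot\overrightarrow{w'}|=-\overrightarrow{p}\cdot\overrightarrow{p'}$ holds iff the squares agree. By the identity, this happens iff the right-hand side of \eqref{eqn:OSIdentity} is zero. That right-hand side is a sum of two nonnegative terms, so it vanishes iff both terms vanish. The first term vanishes iff condition (i) holds, and $T$ vanishes iff condition (ii) holds.

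There is no real obstacle here. The only point needing care is the parallel case: there $T$ vanishes automatically, which is why (ii) is phrased as a disjunction.
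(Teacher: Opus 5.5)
Your proof is correct and follows the paper's argument essentially step for step. You get the sign of $\overrightarrow{p}\cdot\overrightarrow{p'}$ from $\|\overrightarrow{p}-\overrightarrow{p'}\|\ge\dist(\ell,\ell')$, show the last two terms of \eqref{eqn:OSIdentity} are nonnegative via Lemma~\ref{lem:linedist} (or vanish in the parallel case), and analyze equality term by term; your explicit remark that equality of the two nonnegative quantities is equivalent to equality of their squares makes precise a step the paper leaves implicit.
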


\begin{proof}
The points $\overrightarrow{p}$ and $\overrightarrow{p'}$ lie on $\ell$ and $\ell'$. Hence
$\left\|\overrightarrow{p}-\overrightarrow{p'}\right\|\ge\dist(\ell,\ell')\ge\sqrt2$. Since
$\left\|\overrightarrow{p}-\overrightarrow{p'}\right\|^2=2-2\,\overrightarrow{p}\cdot\overrightarrow{p'}$, this
means $\overrightarrow{p}\cdot\overrightarrow{p'}\le0$. If $\ell$ and $\ell'$ are not parallel, then
Lemma~\ref{lem:linedist} gives
$\left(\left(\overrightarrow{p'}-\overrightarrow{p}\right)\cdot\left(\overrightarrow{v}\times\overrightarrow{v'}\right)\right)^2
=\dist^2(\ell,\ell')\left\|\overrightarrow{v}\times\overrightarrow{v'}\right\|^2\ge2\left\|\overrightarrow{v}\times\overrightarrow{v'}\right\|^2$.
If they are parallel, then $\overrightarrow{v}\times\overrightarrow{v'}=0$. In both cases the last two
terms on the right-hand side of \eqref{eqn:OSIdentity} have a non-negative sum. The sum is zero if
and only if (ii) holds. The first term is zero if and only if (i) holds. Lemma~\ref{lem:OSIdentity} now
gives $\left(\overrightarrow{w}\cdot\overrightarrow{w'}\right)^2\le\left(\overrightarrow{p}\cdot\overrightarrow{p'}\right)^2$,
with equality if and only if (i) and (ii) hold. It remains to recall that
$\overrightarrow{p}\cdot\overrightarrow{p'}\le0$.
\end{proof}

For $\beta\in[0,\pi)$ let $\overrightarrow{u}=\langle\cos\beta,\sin\beta,0\rangle$ and
$\overrightarrow{u'}=\langle-\sin\beta,\cos\beta,0\rangle$. Let $\mathcal C_\beta$ be the configuration of
the four tangent lines
\begin{align}
\tfrac{1}{\sqrt2}\left(\pm\overrightarrow{e_1}+\overrightarrow{e_3}\right)+\R\,\overrightarrow{e_2}
\quad\text{and}\quad
\tfrac{1}{\sqrt2}\left(\pm\overrightarrow{u}-\overrightarrow{e_3}\right)+\R\,\overrightarrow{u'}.
\label{eqn:OSfamily}
\end{align}
It consists of two parallel pairs. The pairs lie on opposite sides of the plane $z=0$ and their
directions make the angle $\beta$.

\begin{lemma}\label{lem:Cbeta}
For $\beta\in(0,\pi)$ all six pairwise distances in $\mathcal C_\beta$ are equal to $\sqrt2$. The
configuration $\mathcal C_0$ is the ring of four lines.
\end{lemma}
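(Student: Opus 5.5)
The plan is to compute all six distances directly with Lemma~\ref{lem:linedist}, splitting them into the two within-pair distances and the four cross distances. Write $\ell_\pm=\tfrac1{\sqrt2}(\pm\overrightarrow{e_1}+\overrightarrow{e_3})+\R\,\overrightarrow{e_2}$ and $m_\pm=\tfrac1{\sqrt2}(\pm\overrightarrow{u}-\overrightarrow{e_3})+\R\,\overrightarrow{u'}$. Each of these lines is tangent to $\Sph^2$, since the base point is a unit vector orthogonal to the direction.

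\emph{Within-pair distances.} The lines $\ell_+$ and $\ell_-$ are parallel. Their base points differ by $\sqrt2\,\overrightarrow{e_1}$, which is orthogonal to $\overrightarrow{e_2}$, so the parallel case of Lemma~\ref{lem:linedist} gives distance $\sqrt2$. In the same way, the base points of $m_\pm$ differ by $\sqrt2\,\overrightarrow{u}$, which is orthogonal to $\overrightarrow{u'}$, so $\dist(m_+,m_-)=\sqrt2$.

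\emph{Cross distances.} Here we use the non-parallel case. First,
\[
\overrightarrow{e_2}\times\overrightarrow{u'}=\langle 0,1,0\rangle\times\langle-\sin\beta,\cos\beta,0\rangle=\langle0,0,\sin\beta\rangle,
\]
whose norm is $\sin\beta$. This is nonzero for $\beta\in(0,\pi)$, so the lines are indeed not parallel. Only the $\overrightarrow{e_3}$-component of the difference of base points then matters. For every choice of signs this component is $\tfrac1{\sqrt2}+\tfrac1{\sqrt2}=\sqrt2$ up to sign, because the $\pm\overrightarrow{e_1}$ and $\pm\overrightarrow{u}$ terms lie in the plane $z=0$. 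Hence each cross distance is $|\sqrt2\sin\beta|/\sin\beta=\sqrt2$. This settles all six distances at once, with no case analysis on the signs.

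\emph{The case $\beta=0$.} Here $\overrightarrow{u}=\overrightarrow{e_1}$ and $\overrightarrow{u'}=\overrightarrow{e_2}$, so all four lines are parallel to $\overrightarrow{e_2}$. Their points of tangency are $\tfrac1{\sqrt2}(\pm1,0,\pm1)$, which are the vertices of a square in the plane $y=0$ inscribed in the unit circle. This is precisely the ring configuration for $n=4$, i.e.\ lines perpendicular to a common plane through the vertices of a regular $4$-gon. Consistently, adjacent lines are at distance $\sqrt2=2\sin\frac\pi4$, as in Remark~\ref{rem:ringTau}.

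There is no real obstacle. The only point needing care is the non-parallel check in the cross case, which holds exactly because $\sin\beta\neq0$ on $(0,\pi)$.
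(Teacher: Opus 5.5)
Your proposal is correct and follows the paper's proof essentially step for step. Both compute the two parallel-pair distances from the base-point differences $\sqrt2\,\overrightarrow{e_1}$ and $\sqrt2\,\overrightarrow{u}$, both get the cross distances from $\overrightarrow{e_2}\times\overrightarrow{u'}=\sin\beta\,\overrightarrow{e_3}$ and the $\overrightarrow{e_3}$-coordinate $\pm\sqrt2$ of the base-point difference, and both identify $\mathcal C_0$ as four lines parallel to $\overrightarrow{e_2}$ whose tangency points form a square.
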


\begin{proof}
The lines of the first pair are parallel. The difference of their base points is
$\sqrt2\,\overrightarrow{e_1}$, which is orthogonal to $\overrightarrow{e_2}$. Hence their distance is
$\sqrt2$. The same holds for the second pair. Now take one line from each pair. Then
$\overrightarrow{e_2}\times\overrightarrow{u'}=\sin\beta\,\overrightarrow{e_3}$. The difference of the
base points has $\overrightarrow{e_3}$-coordinate $-\sqrt2$. By Lemma~\ref{lem:linedist} the distance
is $\sqrt2$ when $\sin\beta\neq0$. For $\beta=0$ all four lines are parallel to
$\overrightarrow{e_2}$, and their points of tangency form a square.
\end{proof}

\begin{theorem}\label{thm:OSequality}
Let $\ell_1$, $\ell_2$, $\ell_3$, $\ell_4$ be lines tangent to the unit sphere. Their pairwise
distances are at least $\sqrt2$ if and only if, after a rotation, they form $\mathcal C_\beta$ for
some $\beta\in[0,\pi)$.
\end{theorem}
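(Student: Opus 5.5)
The "if" direction is Lemma~\ref{lem:Cbeta}, since rotations preserve distances. For the converse I would attach orthonormal systems $(\overrightarrow{p_i},\overrightarrow{v_i},\overrightarrow{w_i})$ to the four lines. The plan is to show that every inequality in Lemma~\ref{lem:OSLemma} must be an equality for all six pairs, and then to read off the geometry from the equality conditions (i) and (ii). Lemma~\ref{lem:OSLemma} gives, for every pair $i\neq j$,
\[
\overrightarrow{p_i}\cdot\overrightarrow{p_j}\le 0,\qquad \bigl|\overrightarrow{w_i}\cdot\overrightarrow{w_j}\bigr|\le-\overrightarrow{p_i}\cdot\overrightarrow{p_j}.
\]
These are six pairwise bounds. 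The matching global bound comes from the positivity of the norm:
\[
0\le\Bigl\|\sum_i\overrightarrow{p_i}\Bigr\|^2=4+2\sum_{i<j}\overrightarrow{p_i}\cdot\overrightarrow{p_j},
\]
so $\sum_{i<j}(-\overrightarrow{p_i}\cdot\overrightarrow{p_j})\le2$.

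\textbf{Main step and main obstacle.} I need a reverse global inequality of the form
\[
\sum_{i<j}\bigl|\overrightarrow{w_i}\cdot\overrightarrow{w_j}\bigr|\ge 2,
\]
or a weighted version that pairs correctly with the $p$-bound. Then all inequalities close up, forcing $\sum_i\overrightarrow{p_i}=0$ and equality in Lemma~\ref{lem:OSLemma} for every pair. The naive estimate $\|\sum\varepsilon_i\overrightarrow{w_i}\|^2\ge0$ points the wrong way, so this is the step I expect to be hardest. My first attempt is to use the frame identity $\sum_i(\overrightarrow{p_i}\overrightarrow{p_i}^{T}+\overrightarrow{v_i}\overrightarrow{v_i}^{T}+\overrightarrow{w_i}\overrightarrow{w_i}^{T})=4I$. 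I would combine it with the Frobenius-norm comparison
\[
\sum_{i\neq j}(\overrightarrow{w_i}\cdot\overrightarrow{w_j})^2\le\sum_{i\neq j}(\overrightarrow{p_i}\cdot\overrightarrow{p_j})^2,
\]
which follows from Lemma~\ref{lem:OSLemma}, and with the bound $\|M\|_F^2\ge(\tr M)^2/3$ for $3\times3$ matrices. The aim is a squeeze that forces $P=\sum\overrightarrow{p_i}\overrightarrow{p_i}^{T}$ and $W=\sum\overrightarrow{w_i}\overrightarrow{w_i}^{T}$ into rigid shapes. If that fails, I would apply the norm-positivity trick to the mixed vectors $\overrightarrow{p_i}\pm\overrightarrow{w_i}$, with signs chosen to exploit $|\overrightarrow{w_i}\cdot\overrightarrow{w_j}|\le-\overrightarrow{p_i}\cdot\overrightarrow{p_j}$. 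A fallback is a direct case analysis on the sign pattern of the Gram matrix of $\overrightarrow{p_1},\dots,\overrightarrow{p_4}$. This matrix is a rank at most $3$ Gram matrix of unit vectors with nonpositive off-diagonal entries, and such matrices are quite restricted.

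\textbf{Reading off the configuration.} Once equality holds everywhere, condition (ii) says that each pair of lines is either parallel or at distance exactly $\sqrt2$. Condition (i) says $\overrightarrow{v_i}\cdot\overrightarrow{w_j}=\overrightarrow{w_i}\cdot\overrightarrow{v_j}$ for every pair. Together with $\sum_i\overrightarrow{p_i}=0$ and $|\overrightarrow{w_i}\cdot\overrightarrow{w_j}|=-\overrightarrow{p_i}\cdot\overrightarrow{p_j}$, I would show that the lines split into two pairs. Within each pair the lines are parallel with antipodal-type tangency geometry, namely $\overrightarrow{p_1}\cdot\overrightarrow{p_2}=0$ and common direction. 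After a rotation, the first pair is $\tfrac1{\sqrt2}(\pm\overrightarrow{e_1}+\overrightarrow{e_3})+\R\overrightarrow{e_2}$. The relation $\sum\overrightarrow{p_i}=0$ then places the second pair symmetrically below the plane $z=0$, and the equal-distance condition for cross pairs, computed as in Lemma~\ref{lem:Cbeta}, pins it down up to the angle $\beta$. The degenerate case $\beta=0$, where all four lines are parallel, reduces to four points on a great circle with mutual chords at least $\sqrt2$. These points form a square, which is $\mathcal C_0$.
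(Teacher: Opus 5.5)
\textbf{Gap.} Your proposal stops at the step you flag as the main obstacle, and none of the tools you list closes it. The unweighted target $\sum_{i<j}|\overrightarrow{w_i}\cdot\overrightarrow{w_j}|\ge2$ is not a general fact about four unit vectors in $\R^3$. For example, $\overrightarrow{e_1},\overrightarrow{e_2},\overrightarrow{e_3},\overrightarrow{e_1}$ give $1$. So it would have to come from the geometry, and neither the frame identity nor the Frobenius comparison supplies it.

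\textbf{Missing idea.} The decisive use of dimension three is that four vectors in $\R^3$ are linearly dependent. Take $\overrightarrow a\ne0$ with $\sum_i a_i\overrightarrow{w_i}=0$. Reversing every line with $a_i<0$ flips $\overrightarrow{w_i}$ but not the line, so you may assume all $a_i\ge0$. Write $W_{ij}=\overrightarrow{w_i}\cdot\overrightarrow{w_j}$ and $P_{ij}=\overrightarrow{p_i}\cdot\overrightarrow{p_j}$; these are Gram entries, not your frame operators. Then Lemma~\ref{lem:OSLemma} gives
\[
0=\Bigl\|\sum_i a_i\overrightarrow{w_i}\Bigr\|^2\ge\sum_i a_i^2-\sum_{i\ne j}\bigl|W_{ij}\bigr|a_ia_j\ge\sum_i a_i^2+\sum_{i\ne j}P_{ij}a_ia_j=\Bigl\|\sum_i a_i\overrightarrow{p_i}\Bigr\|^2\ge0 .
\]
This is the weighted version you wanted. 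The weights are not chosen in advance; they are forced by the dependence. The $w$-side is an equality to $0$, not merely $\ge0$, which is what turns your ``wrong direction'' into the right one.

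\textbf{Zero weights.} The chain gives equality only for pairs with $a_i,a_j>0$, so zero weights need separate work. If $a_k=0<a_i$, then $\sum_i a_iP_{ik}=0$ with nonpositive terms forces $P_{ik}=W_{ik}=0$. The case of two zero weights needs its own argument and ends with one more reversal.

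\textbf{Reading off the configuration.} What comes out is the \emph{signed} equality $W_{ij}=P_{ij}$ for all pairs. This is stronger than $|W_{ij}|=-P_{ij}$, and it drives the second half. It yields an orthogonal $Q$ with $Q\overrightarrow{p_i}=\overrightarrow{w_i}$, which becomes a rotation after reversing all four lines if necessary. From $\overrightarrow{p_i}\cdot Q\overrightarrow{p_i}=0$, the tangency points lie on at most two circles of latitude about the axis of $Q$, and the rotation angle is at least $\frac\pi2$. Condition~(i) becomes $\det(\overrightarrow{w_i},\overrightarrow{w_j},\overrightarrow{p_i}+\overrightarrow{p_j})=0$. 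This rules out angles strictly between $\frac\pi2$ and $\pi$, leaving the ring (angle $\frac\pi2$) and $\mathcal C_\beta$ (angle $\pi$, two antipodal pairs).

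Your ``reading off'' paragraph asserts the two-pair structure without any mechanism. With only the absolute-value equalities you would still need a consistent choice of orientations making $W=P$ before any such isometry could be built. The dependence argument provides that choice automatically.
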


\begin{proof}
The configurations $\mathcal C_\beta$ have the required distances by Lemma~\ref{lem:Cbeta}.
Conversely, assume that the pairwise distances are at least $\sqrt2$.
For each $i$ let
$\left(\overrightarrow{p_i},\overrightarrow{v_i},\overrightarrow{w_i}\right)$ be an associated
orthonormal system. Let $P$ and $W$ be the Gram matrices with entries
$P_{ij}=\overrightarrow{p_i}\cdot\overrightarrow{p_j}$ and $W_{ij}=\overrightarrow{w_i}\cdot\overrightarrow{w_j}$.
Four vectors in $\R^3$ are linearly dependent. Hence there is a non-zero vector
$\overrightarrow{a}\in\R^4$ with $\sum_ia_i\overrightarrow{w_i}=0$. Reversing the direction of $\ell_i$
does not change the line, and it replaces $\overrightarrow{w_i}$ by $-\overrightarrow{w_i}$. We
reverse every line with $a_i<0$, and we replace each such $a_i$ by $-a_i$. The relation
$\sum_ia_i\overrightarrow{w_i}=0$ still holds, and now $a_i\ge0$ for all $i$. Lemma~\ref{lem:OSLemma}
gives
\begin{align}
0&=\left\|\sum_{i}a_i\overrightarrow{w_i}\right\|^2
=\sum_{i}a_i^2+\sum_{i\neq j}W_{ij}a_ia_j
\ge\sum_{i}a_i^2-\sum_{i\neq j}\left|W_{ij}\right|a_ia_j\nonumber\\
&\ge\sum_{i}a_i^2+\sum_{i\neq j}P_{ij}a_ia_j
=\left\|\sum_{i}a_i\overrightarrow{p_i}\right\|^2\ge0.
\label{eqn:OSchain}
\end{align}
Hence all inequalities in \eqref{eqn:OSchain} are equalities.

Let us prove the following claim:
\begin{align} & \text{The matrices }W\text{ and }P\text{ are equal, or become equal after reversing one more line.} \label{eqn:ClaimInOSProof} \end{align}
The equalities in \eqref{eqn:OSchain} give $\sum_ia_i\overrightarrow{p_i}=0$. They also give
$W_{ij}=-\left|W_{ij}\right|=P_{ij}$ whenever $a_i>0$ and $a_j>0$. Now let $a_k=0$. Then
$\sum_ia_iP_{ik}=\overrightarrow{p_k}\cdot\sum_ia_i\overrightarrow{p_i}=\overrightarrow{p_k}\cdot0=0$, and all
terms $a_iP_{ik}$ are non-positive. Hence $P_{ik}=0$ whenever $a_i>0$. Lemma~\ref{lem:OSLemma} gives
$W_{ik}=0=P_{ik}$. The diagonal entries of $W$ and $P$ are all equal to $1$. So far we know that
$W_{ij}=P_{ij}$ for every pair $i\neq j$ in which at least one of $a_i$ and $a_j$ is positive. Since
$\overrightarrow{a}\neq0$ and $\overrightarrow{w_i}\neq0$, at least two of the $a_i$ are positive. Hence
there is at most one pair $(i,j)$ with $a_i=a_j=0$. If there is none, the claim
\eqref{eqn:ClaimInOSProof} is proved. Otherwise we may assume that $a_1,a_2>0$ and $a_3=a_4=0$.
Then $a_1\overrightarrow{p_1}=-a_2\overrightarrow{p_2}$. Taking lengths gives $a_1=a_2$, and hence
$\overrightarrow{p_2}=-\overrightarrow{p_1}$. In the same way $\sum_ia_i\overrightarrow{w_i}=0$ gives
$\overrightarrow{w_2}=-\overrightarrow{w_1}$. Therefore
$\overrightarrow{v_2}=\overrightarrow{w_2}\times\overrightarrow{p_2}=\overrightarrow{w_1}\times\overrightarrow{p_1}=\overrightarrow{v_1}$.
Let $k\in\{3,4\}$. We showed above that $W_{1k}=P_{1k}=0$ and $W_{2k}=P_{2k}=0$. So the inequality of
Lemma~\ref{lem:OSLemma} is an equality for the pairs $(1,k)$ and $(2,k)$, and condition (i) gives
\begin{align*}
\overrightarrow{v_1}\cdot\overrightarrow{w_k}=\overrightarrow{w_1}\cdot\overrightarrow{v_k}
=-\overrightarrow{w_2}\cdot\overrightarrow{v_k}=-\overrightarrow{v_2}\cdot\overrightarrow{w_k}
=-\overrightarrow{v_1}\cdot\overrightarrow{w_k}.
\end{align*}
Hence $\overrightarrow{v_1}\cdot\overrightarrow{w_k}=\overrightarrow{w_1}\cdot\overrightarrow{v_k}=0$. The
vector $\overrightarrow{w_1}$ is orthogonal to $\overrightarrow{v_k}$ and, since $W_{1k}=0$, also to
$\overrightarrow{w_k}$. Therefore $\overrightarrow{p_k}=\pm\overrightarrow{w_1}$. The vector
$\overrightarrow{w_k}$ is orthogonal to $\overrightarrow{v_1}$ and $\overrightarrow{w_1}$, so
$\overrightarrow{w_k}=\pm\overrightarrow{p_1}$. Then
$\overrightarrow{v_k}=\overrightarrow{w_k}\times\overrightarrow{p_k}=\pm\overrightarrow{p_1}\times\overrightarrow{w_1}=\pm\overrightarrow{v_1}$.
If $\overrightarrow{p_3}=\overrightarrow{p_4}$, then $\ell_3$ and $\ell_4$ have the same point of tangency
and parallel directions, so they coincide. Hence $P_{34}=-1$ and $W_{34}=\pm1$. Reversing $\ell_4$ if
necessary gives $W_{34}=P_{34}$. This proves the claim \eqref{eqn:ClaimInOSProof}.

From now on $W=P$. The arrays of vectors $(\overrightarrow{p_i})_{i=1}^4$ and $(\overrightarrow{w_i})_{i=1}^4$ have the
same Gram matrix. Hence the map $\sum_ix_i\overrightarrow{p_i}\mapsto\sum_ix_i\overrightarrow{w_i}$ is
well defined and preserves lengths. It extends to an orthogonal matrix $Q$ with
$Q\overrightarrow{p_i}=\overrightarrow{w_i}$ for all $i$. If $\det Q=-1$, we reverse the directions of all four lines and
replace $Q$ by $-Q$. This does not change $W$. So $Q$ is a rotation by an angle $\varphi\in[0,\pi]$ about a unit vector, which
we may take to be $\overrightarrow{e_3}$. 
The matrix representation of $Q$ is \begin{align*} Q&=\left[\begin{array}{ccc}\cos\varphi& -\sin\varphi& 0\\ \sin\varphi&\cos\varphi&0\\0&0&1\end{array}\right].
\end{align*}
In particular $\sin\varphi\ge0$. For every unit vector $\overrightarrow{x}$ it is straightforward to verify that the quadratic form associated with $Q$ is
\begin{align*}
\overrightarrow{x}\cdot Q\overrightarrow{x}&=\cos\varphi+(1-\cos\varphi)\left(\overrightarrow{x}\cdot\overrightarrow{e_3}\right)^2.
\end{align*}

Since $\overrightarrow{p_i}\cdot Q\overrightarrow{p_i}=\overrightarrow{p_i}\cdot\overrightarrow{w_i}=0$,
we get $\varphi\neq0$, $\cos\varphi\le0$, and
\begin{align*}
\overrightarrow{p_i}=\left\langle r\cos\alpha_i,\ r\sin\alpha_i,\ \varepsilon_ih\right\rangle,
\quad h=\sqrt{\tfrac{-\cos\varphi}{1-\cos\varphi}},\quad r=\sqrt{\tfrac{1}{1-\cos\varphi}},\quad \varepsilon_i\in\{1,-1\}.
\end{align*}
Since $W=P$ and $P_{ij}\leq 0$, all pairs satisfy $\left|W_{ij}\right|=-P_{ij}$. By Lemma~\ref{lem:OSLemma},
$\overrightarrow{v_i}\cdot\overrightarrow{w_j}=\overrightarrow{w_i}\cdot\overrightarrow{v_j}$ for all
$i\neq j$. Since $\overrightarrow{v_i}=\overrightarrow{w_i}\times\overrightarrow{p_i}$ and
$\overrightarrow{v_j}=\overrightarrow{w_j}\times\overrightarrow{p_j}$, we have
\begin{align*}
\overrightarrow{v_i}\cdot\overrightarrow{w_j}-\overrightarrow{w_i}\cdot\overrightarrow{v_j}
=\det\left(\overrightarrow{w_i},\overrightarrow{p_i},\overrightarrow{w_j}\right)
-\det\left(\overrightarrow{w_i},\overrightarrow{w_j},\overrightarrow{p_j}\right)
=-\det\left(\overrightarrow{w_i},\overrightarrow{w_j},\overrightarrow{p_i}+\overrightarrow{p_j}\right).
\end{align*}
Hence the equation $\overrightarrow{v_i}\cdot\overrightarrow{w_j}=\overrightarrow{w_i}\cdot\overrightarrow{v_j}$
is equivalent to
$\det\left(\overrightarrow{w_i},\overrightarrow{w_j},\overrightarrow{p_i}+\overrightarrow{p_j}\right)=0$.
Let $\delta=\alpha_i-\alpha_j$. Since
$\overrightarrow{w_i}=Q\overrightarrow{p_i}=\left\langle r\cos(\alpha_i+\varphi),\ r\sin(\alpha_i+\varphi),\ \varepsilon_ih\right\rangle$,
expanding the determinant gives
\begin{align*}
\det\left(\overrightarrow{w_i},\overrightarrow{w_j},\overrightarrow{p_i}+\overrightarrow{p_j}\right)
&=
\begin{vmatrix}
r\cos(\alpha_i+\varphi) & r\cos(\alpha_j+\varphi) & r(\cos\alpha_i+\cos\alpha_j)\\
r\sin(\alpha_i+\varphi) & r\sin(\alpha_j+\varphi) & r(\sin\alpha_i+\sin\alpha_j)\\
\varepsilon_ih & \varepsilon_jh & (\varepsilon_i+\varepsilon_j)h
\end{vmatrix}\\
&=r^2h\cdot\begin{cases}
-2\varepsilon_i(1-\cos\varphi)\sin\delta, & \text{if }\varepsilon_i=\varepsilon_j,\\
-2\varepsilon_i\sin\varphi\,(1+\cos\delta), & \text{if }\varepsilon_i\neq\varepsilon_j.
\end{cases}
\end{align*}
Note that $r>0$ and $1-\cos\varphi>0$.
If $\cos\varphi=0$, then $h=0$. All $\overrightarrow{p_i}$ and $\overrightarrow{w_i}$ are orthogonal to
$\overrightarrow{e_3}$, so all lines are parallel to $\overrightarrow{e_3}$. The four points
$\overrightarrow{p_i}$ lie on a great circle, and their pairwise angles are at least $\frac\pi2$. Hence
consecutive points are at angle exactly $\frac\pi2$. This is the ring $\mathcal C_0$.

If $-1<\cos\varphi<0$, then $h>0$ and $\sin\varphi>0$. Take two lines at the same height. The
determinant vanishes, so $\sin\delta=0$. Also $\delta\not\equiv0\pmod{2\pi}$, since otherwise the two
lines coincide. So $\delta\equiv\pi\pmod{2\pi}$. Take two lines at different heights. The determinant
vanishes, so $\cos\delta=-1$. Hence every two of the four angles $\alpha_i$ differ by $\pi$ modulo
$2\pi$. This is impossible for three angles.

If $\cos\varphi=-1$, then $h=r=\frac1{\sqrt2}$ and $\sin\varphi=0$. Two lines at the same height again
satisfy $\delta\equiv\pi\pmod{2\pi}$. Hence each of the two circles contains exactly two of the points
$\overrightarrow{p_i}$, and they are antipodal on that circle. After a rotation about
$\overrightarrow{e_3}$, the points are $\frac1{\sqrt2}\left(\pm\overrightarrow{e_1}+\overrightarrow{e_3}\right)$
and $\frac1{\sqrt2}\left(\pm\overrightarrow{u}-\overrightarrow{e_3}\right)$. Here $Q$ is the rotation by
$\pi$ about $\overrightarrow{e_3}$. For $\overrightarrow{x}=\langle x_1,x_2,x_3\rangle$ we get
$Q\overrightarrow{x}\times\overrightarrow{x}=2x_3\left\langle-x_2,x_1,0\right\rangle$. So the directions
are $\pm\overrightarrow{e_2}$ and $\pm\overrightarrow{u'}$. This is $\mathcal C_\beta$.
\end{proof}

\begin{proof}[Proof of Theorem~\ref{thm:OS}]
Suppose that four non-overlapping cylinders of radius $\omega'>1+\sqrt2$ touch the unit ball. By
Lemma~\ref{lem:dictionary} there are four tangent lines with pairwise distances at least
$\tau(\omega')>\sqrt2$. By Theorem~\ref{thm:OSequality} they form a configuration $\mathcal C_\beta$.
By Lemma~\ref{lem:Cbeta} two of them are at distance exactly $\sqrt2$. This is a contradiction.
\end{proof}

\begin{remark}
The homothety of Lemma~\ref{lem:dictionary} maps configurations of cylinders bijectively to
configurations of tangent lines. Hence Theorem~\ref{thm:OSequality} proves the conjecture of
Ogievetsky and Shlosman \cite{OSsix,OSPlatonic} and \cite[Section~3.1]{OSMorse} in full. They conjectured that the four-cylinder configurations of radius
$1+\sqrt2$ form a circle modulo rotations, and that no configuration of larger radius exists. The
family $\mathcal C_\beta$ is the motion found in \cite[Section~6.3]{OSsix}. One parallel pair stays
fixed and the other pair rotates about the common perpendicular $\overrightarrow{e_3}$. As $\beta\to\pi$, the configuration
$\mathcal C_\beta$ tends to $\mathcal C_0$. Measure the angle from the direction of the first
pair to the direction of the second pair about the axis that joins the pairs. This angle is $\beta$,
and it does not change under rotations. Hence distinct $\beta\in[0,\pi)$ give configurations that are not rotations of
each other, and the family is a circle.
\end{remark}

\begin{remark}\label{rem:five}
The ring of five cylinders can be expanded. A numerical configuration with this property was found
by Firsching \cite{F}. The standalone verifier \cite{MRVer} confirms it in exact arithmetic for a
rational configuration of five tangent lines (Appendix~\ref{app:data}).
\end{remark}

\section{The pinwheel and its distance formulas}\label{sec:pinwheel}

\begin{definition}[Pinwheel]\label{def:pinwheel}
Let $N=2k\ge6$ be even and set $\gamma=\frac{2\pi}k=\frac{4\pi}N$. For a latitude
$\mu\in(-\frac\pi2,\frac\pi2)$ and longitude $\lambda$, let
\[
x(\mu,\lambda)=(\cos \mu\cos\lambda,\ \cos \mu\sin\lambda,\ \sin \mu)\in\Sph^2,
\]
and let $\overrightarrow{u}=\langle-\sin \mu\cos\lambda,\,-\sin \mu\sin\lambda,\,\cos \mu\rangle$ and
$\overrightarrow{t}=\langle-\sin\lambda,\,\cos\lambda,\,0\rangle$ be the unit tangent vectors at $x(\mu,\lambda)$ pointing north
and east. Given parameters $\varphi\in(0,\frac\pi2)$, $\delta\in(0,\frac\pi2)$ and
$\psi\in\R$, the pinwheel $P(N;\varphi,\delta,\psi)$ is the family of $N$ lines
tangent to $\Sph^2$ consisting of the \emph{up group}
\begin{align*}
\ell_j^{+}:\quad\text{tangent point }x(\varphi,\ j\gamma),\quad
\text{direction } \overrightarrow{v}=\cos\delta\,\overrightarrow{u}+\sin\delta\,\overrightarrow{t},\quad j=0,\dots,k-1,
\end{align*}
and the \emph{down group}
\begin{align*}
\ell_j^{-}:\quad\text{tangent point }x(-\varphi,\ j\gamma+\psi),\quad
\text{direction } \overrightarrow{v}=\cos\delta\,\overrightarrow{u}+\sin\delta\,\overrightarrow{t},\quad j=0,\dots,k-1.
\end{align*}
\end{definition}

Up to a rotation about the vertical axis and a reflection, $P(N;\varphi,\delta,\psi)$ is the
configuration $C_{N/2}(\varphi,\varkappa,\delta)$ of Ogievetsky and Shlosman \cite[Section~6]{OSsix}
with $\psi=\frac\gamma2+2\varkappa$. We index by the number of lines rather than by
the number of lines in one group.

\begin{figure}[t]
\centering
\colorlet{clrN}{green!40!black}
\colorlet{clrS}{blue!50}
\begin{tikzpicture}[scale=3.2, line cap=round, line join=round, >=stealth, dot/.style={circle, fill, inner sep=1.05pt}]
\draw[line width=0.5pt] (0,0) circle (1);
\draw[thin] plot[smooth] coordinates {(-1,0) (-0.9994,-0.0109) (-0.9975,-0.0218) (-0.9944,-0.0327) (-0.99,-0.0435) (-0.9845,-0.0543) (-0.9777,-0.065) (-0.9696,-0.0756) (-0.9604,-0.0861) (-0.95,-0.0965) (-0.9383,-0.1068) (-0.9256,-0.117) (-0.9116,-0.127) (-0.8965,-0.1369) (-0.8804,-0.1466) (-0.8631,-0.1561) (-0.8447,-0.1654) (-0.8253,-0.1745) (-0.8048,-0.1834) (-0.7834,-0.1921) (-0.761,-0.2005) (-0.7376,-0.2087) (-0.7133,-0.2166) (-0.6881,-0.2242) (-0.6621,-0.2316) (-0.6352,-0.2387) (-0.6076,-0.2454) (-0.5792,-0.2519) (-0.5501,-0.2581) (-0.5202,-0.2639) (-0.4898,-0.2694) (-0.4587,-0.2746) (-0.4271,-0.2794) (-0.3949,-0.2839) (-0.3622,-0.288) (-0.3291,-0.2918) (-0.2956,-0.2952) (-0.2617,-0.2983) (-0.2274,-0.3009) (-0.1929,-0.3032) (-0.1582,-0.3051) (-0.1232,-0.3067) (-0.0881,-0.3078) (-0.0529,-0.3086) (-0.0176,-0.309) (0.0176,-0.309) (0.0529,-0.3086) (0.0881,-0.3078) (0.1232,-0.3067) (0.1582,-0.3051) (0.1929,-0.3032) (0.2274,-0.3009) (0.2617,-0.2983) (0.2956,-0.2952) (0.3291,-0.2918) (0.3622,-0.288) (0.3949,-0.2839) (0.4271,-0.2794) (0.4587,-0.2746) (0.4898,-0.2694) (0.5202,-0.2639) (0.5501,-0.2581) (0.5792,-0.2519) (0.6076,-0.2454) (0.6352,-0.2387) (0.6621,-0.2316) (0.6881,-0.2242) (0.7133,-0.2166) (0.7376,-0.2087) (0.761,-0.2005) (0.7834,-0.1921) (0.8048,-0.1834) (0.8253,-0.1745) (0.8447,-0.1654) (0.8631,-0.1561) (0.8804,-0.1466) (0.8965,-0.1369) (0.9116,-0.127) (0.9256,-0.117) (0.9383,-0.1068) (0.95,-0.0965) (0.9604,-0.0861) (0.9696,-0.0756) (0.9777,-0.065) (0.9845,-0.0543) (0.99,-0.0435) (0.9944,-0.0327) (0.9975,-0.0218) (0.9994,-0.0109) (1,0)};
\draw[very thin, dashed] plot[smooth] coordinates {(1,0) (0.9994,0.0109) (0.9975,0.0218) (0.9944,0.0327) (0.99,0.0435) (0.9845,0.0543) (0.9777,0.065) (0.9696,0.0756) (0.9604,0.0861) (0.95,0.0965) (0.9383,0.1068) (0.9256,0.117) (0.9116,0.127) (0.8965,0.1369) (0.8804,0.1466) (0.8631,0.1561) (0.8447,0.1654) (0.8253,0.1745) (0.8048,0.1834) (0.7834,0.1921) (0.761,0.2005) (0.7376,0.2087) (0.7133,0.2166) (0.6881,0.2242) (0.6621,0.2316) (0.6352,0.2387) (0.6076,0.2454) (0.5792,0.2519) (0.5501,0.2581) (0.5202,0.2639) (0.4898,0.2694) (0.4587,0.2746) (0.4271,0.2794) (0.3949,0.2839) (0.3622,0.288) (0.3291,0.2918) (0.2956,0.2952) (0.2617,0.2983) (0.2274,0.3009) (0.1929,0.3032) (0.1582,0.3051) (0.1232,0.3067) (0.0881,0.3078) (0.0529,0.3086) (0.0176,0.309) (-0.0176,0.309) (-0.0529,0.3086) (-0.0881,0.3078) (-0.1232,0.3067) (-0.1582,0.3051) (-0.1929,0.3032) (-0.2274,0.3009) (-0.2617,0.2983) (-0.2956,0.2952) (-0.3291,0.2918) (-0.3622,0.288) (-0.3949,0.2839) (-0.4271,0.2794) (-0.4587,0.2746) (-0.4898,0.2694) (-0.5202,0.2639) (-0.5501,0.2581) (-0.5792,0.2519) (-0.6076,0.2454) (-0.6352,0.2387) (-0.6621,0.2316) (-0.6881,0.2242) (-0.7133,0.2166) (-0.7376,0.2087) (-0.761,0.2005) (-0.7834,0.1921) (-0.8048,0.1834) (-0.8253,0.1745) (-0.8447,0.1654) (-0.8631,0.1561) (-0.8804,0.1466) (-0.8965,0.1369) (-0.9116,0.127) (-0.9256,0.117) (-0.9383,0.1068) (-0.95,0.0965) (-0.9604,0.0861) (-0.9696,0.0756) (-0.9777,0.065) (-0.9845,0.0543) (-0.99,0.0435) (-0.9944,0.0327) (-0.9975,0.0218) (-0.9994,0.0109) (-1,0)};
\draw[line width=0.7pt, clrN] plot[smooth] coordinates {(-0.7622,0.6473) (-0.7698,0.6376) (-0.776,0.6278) (-0.781,0.6179) (-0.7846,0.608) (-0.787,0.598) (-0.788,0.588) (-0.7876,0.578) (-0.786,0.568) (-0.783,0.5581) (-0.7787,0.5482) (-0.7731,0.5383) (-0.7661,0.5286) (-0.7579,0.5189) (-0.7484,0.5093) (-0.7377,0.4999) (-0.7257,0.4906) (-0.7125,0.4815) (-0.698,0.4725) (-0.6824,0.4638) (-0.6657,0.4552) (-0.6478,0.4469) (-0.6288,0.4388) (-0.6088,0.4309) (-0.5878,0.4233) (-0.5657,0.416) (-0.5427,0.409) (-0.5188,0.4022) (-0.494,0.3958) (-0.4684,0.3897) (-0.442,0.3839) (-0.4148,0.3785) (-0.387,0.3734) (-0.3585,0.3687) (-0.3294,0.3643) (-0.2997,0.3603) (-0.2695,0.3567) (-0.2389,0.3535) (-0.2079,0.3506) (-0.1765,0.3482) (-0.1448,0.3462) (-0.1129,0.3445) (-0.0808,0.3433) (-0.0485,0.3425) (-0.0162,0.3421) (0.0162,0.3421) (0.0485,0.3425) (0.0808,0.3433) (0.1129,0.3445) (0.1448,0.3462) (0.1765,0.3482) (0.2079,0.3506) (0.2389,0.3535) (0.2695,0.3567) (0.2997,0.3603) (0.3294,0.3643) (0.3585,0.3687) (0.387,0.3734) (0.4148,0.3785) (0.442,0.3839) (0.4684,0.3897) (0.494,0.3958) (0.5188,0.4022) (0.5427,0.409) (0.5657,0.416) (0.5878,0.4233) (0.6088,0.4309) (0.6288,0.4388) (0.6478,0.4469) (0.6657,0.4552) (0.6824,0.4638) (0.698,0.4725) (0.7125,0.4815) (0.7257,0.4906) (0.7377,0.4999) (0.7484,0.5093) (0.7579,0.5189) (0.7661,0.5286) (0.7731,0.5383) (0.7787,0.5482) (0.783,0.5581) (0.786,0.568) (0.7876,0.578) (0.788,0.588) (0.787,0.598) (0.7846,0.608) (0.781,0.6179) (0.776,0.6278) (0.7698,0.6376) (0.7622,0.6473)};
\draw[very thin, dashed, clrN] plot[smooth] coordinates {(0.7622,0.6473) (0.756,0.6543) (0.7491,0.6611) (0.7415,0.6679) (0.7333,0.6747) (0.7245,0.6813) (0.715,0.6879) (0.7049,0.6944) (0.6942,0.7008) (0.6829,0.707) (0.671,0.7132) (0.6585,0.7193) (0.6454,0.7252) (0.6318,0.7311) (0.6176,0.7368) (0.6029,0.7423) (0.5876,0.7478) (0.5719,0.7531) (0.5556,0.7582) (0.5389,0.7632) (0.5217,0.768) (0.504,0.7727) (0.4859,0.7772) (0.4674,0.7816) (0.4484,0.7858) (0.4291,0.7898) (0.4094,0.7936) (0.3894,0.7972) (0.369,0.8007) (0.3482,0.804) (0.3272,0.8071) (0.3059,0.8099) (0.2843,0.8126) (0.2625,0.8151) (0.2404,0.8174) (0.2182,0.8195) (0.1957,0.8214) (0.1731,0.8231) (0.1503,0.8246) (0.1274,0.8258) (0.1044,0.8269) (0.0813,0.8277) (0.0581,0.8284) (0.0349,0.8288) (0.0116,0.829) (-0.0116,0.829) (-0.0349,0.8288) (-0.0581,0.8284) (-0.0813,0.8277) (-0.1044,0.8269) (-0.1274,0.8258) (-0.1503,0.8246) (-0.1731,0.8231) (-0.1957,0.8214) (-0.2182,0.8195) (-0.2404,0.8174) (-0.2625,0.8151) (-0.2843,0.8126) (-0.3059,0.8099) (-0.3272,0.8071) (-0.3482,0.804) (-0.369,0.8007) (-0.3894,0.7972) (-0.4094,0.7936) (-0.4291,0.7898) (-0.4484,0.7858) (-0.4674,0.7816) (-0.4859,0.7772) (-0.504,0.7727) (-0.5217,0.768) (-0.5389,0.7632) (-0.5556,0.7582) (-0.5719,0.7531) (-0.5876,0.7478) (-0.6029,0.7423) (-0.6176,0.7368) (-0.6318,0.7311) (-0.6454,0.7252) (-0.6585,0.7193) (-0.671,0.7132) (-0.6829,0.707) (-0.6942,0.7008) (-0.7049,0.6944) (-0.715,0.6879) (-0.7245,0.6813) (-0.7333,0.6747) (-0.7415,0.6679) (-0.7491,0.6611) (-0.756,0.6543) (-0.7622,0.6473)};
\draw[line width=0.7pt, clrS] plot[smooth] coordinates {(-0.7622,-0.6473) (-0.756,-0.6543) (-0.7491,-0.6611) (-0.7415,-0.6679) (-0.7333,-0.6747) (-0.7245,-0.6813) (-0.715,-0.6879) (-0.7049,-0.6944) (-0.6942,-0.7008) (-0.6829,-0.707) (-0.671,-0.7132) (-0.6585,-0.7193) (-0.6454,-0.7252) (-0.6318,-0.7311) (-0.6176,-0.7368) (-0.6029,-0.7423) (-0.5876,-0.7478) (-0.5719,-0.7531) (-0.5556,-0.7582) (-0.5389,-0.7632) (-0.5217,-0.768) (-0.504,-0.7727) (-0.4859,-0.7772) (-0.4674,-0.7816) (-0.4484,-0.7858) (-0.4291,-0.7898) (-0.4094,-0.7936) (-0.3894,-0.7972) (-0.369,-0.8007) (-0.3482,-0.804) (-0.3272,-0.8071) (-0.3059,-0.8099) (-0.2843,-0.8126) (-0.2625,-0.8151) (-0.2404,-0.8174) (-0.2182,-0.8195) (-0.1957,-0.8214) (-0.1731,-0.8231) (-0.1503,-0.8246) (-0.1274,-0.8258) (-0.1044,-0.8269) (-0.0813,-0.8277) (-0.0581,-0.8284) (-0.0349,-0.8288) (-0.0116,-0.829) (0.0116,-0.829) (0.0349,-0.8288) (0.0581,-0.8284) (0.0813,-0.8277) (0.1044,-0.8269) (0.1274,-0.8258) (0.1503,-0.8246) (0.1731,-0.8231) (0.1957,-0.8214) (0.2182,-0.8195) (0.2404,-0.8174) (0.2625,-0.8151) (0.2843,-0.8126) (0.3059,-0.8099) (0.3272,-0.8071) (0.3482,-0.804) (0.369,-0.8007) (0.3894,-0.7972) (0.4094,-0.7936) (0.4291,-0.7898) (0.4484,-0.7858) (0.4674,-0.7816) (0.4859,-0.7772) (0.504,-0.7727) (0.5217,-0.768) (0.5389,-0.7632) (0.5556,-0.7582) (0.5719,-0.7531) (0.5876,-0.7478) (0.6029,-0.7423) (0.6176,-0.7368) (0.6318,-0.7311) (0.6454,-0.7252) (0.6585,-0.7193) (0.671,-0.7132) (0.6829,-0.707) (0.6942,-0.7008) (0.7049,-0.6944) (0.715,-0.6879) (0.7245,-0.6813) (0.7333,-0.6747) (0.7415,-0.6679) (0.7491,-0.6611) (0.756,-0.6543) (0.7622,-0.6473)};
\draw[very thin, dashed, clrS] plot[smooth] coordinates {(0.7622,-0.6473) (0.7698,-0.6376) (0.776,-0.6278) (0.781,-0.6179) (0.7846,-0.608) (0.787,-0.598) (0.788,-0.588) (0.7876,-0.578) (0.786,-0.568) (0.783,-0.5581) (0.7787,-0.5482) (0.7731,-0.5383) (0.7661,-0.5286) (0.7579,-0.5189) (0.7484,-0.5093) (0.7377,-0.4999) (0.7257,-0.4906) (0.7125,-0.4815) (0.698,-0.4725) (0.6824,-0.4638) (0.6657,-0.4552) (0.6478,-0.4469) (0.6288,-0.4388) (0.6088,-0.4309) (0.5878,-0.4233) (0.5657,-0.416) (0.5427,-0.409) (0.5188,-0.4022) (0.494,-0.3958) (0.4684,-0.3897) (0.442,-0.3839) (0.4148,-0.3785) (0.387,-0.3734) (0.3585,-0.3687) (0.3294,-0.3643) (0.2997,-0.3603) (0.2695,-0.3567) (0.2389,-0.3535) (0.2079,-0.3506) (0.1765,-0.3482) (0.1448,-0.3462) (0.1129,-0.3445) (0.0808,-0.3433) (0.0485,-0.3425) (0.0162,-0.3421) (-0.0162,-0.3421) (-0.0485,-0.3425) (-0.0808,-0.3433) (-0.1129,-0.3445) (-0.1448,-0.3462) (-0.1765,-0.3482) (-0.2079,-0.3506) (-0.2389,-0.3535) (-0.2695,-0.3567) (-0.2997,-0.3603) (-0.3294,-0.3643) (-0.3585,-0.3687) (-0.387,-0.3734) (-0.4148,-0.3785) (-0.442,-0.3839) (-0.4684,-0.3897) (-0.494,-0.3958) (-0.5188,-0.4022) (-0.5427,-0.409) (-0.5657,-0.416) (-0.5878,-0.4233) (-0.6088,-0.4309) (-0.6288,-0.4388) (-0.6478,-0.4469) (-0.6657,-0.4552) (-0.6824,-0.4638) (-0.698,-0.4725) (-0.7125,-0.4815) (-0.7257,-0.4906) (-0.7377,-0.4999) (-0.7484,-0.5093) (-0.7579,-0.5189) (-0.7661,-0.5286) (-0.7731,-0.5383) (-0.7787,-0.5482) (-0.783,-0.5581) (-0.786,-0.568) (-0.7876,-0.578) (-0.788,-0.588) (-0.787,-0.598) (-0.7846,-0.608) (-0.781,-0.6179) (-0.776,-0.6278) (-0.7698,-0.6376) (-0.7622,-0.6473)};
\draw[->, thin] (0,0) -- (0,1.2364);
\node[anchor=base west, inner sep=1pt] at (-0.02,1.2864) {$z$};
\draw[line width=0.35pt, clrN] plot[smooth] coordinates {(-0.3794,-0.9252) (-0.4061,-0.9135) (-0.4322,-0.9004) (-0.4576,-0.886) (-0.4824,-0.8703) (-0.5064,-0.8533) (-0.5297,-0.8351) (-0.5522,-0.8156) (-0.574,-0.795) (-0.5948,-0.7731) (-0.6148,-0.7502) (-0.6339,-0.7261) (-0.6521,-0.7009) (-0.6693,-0.6748) (-0.6855,-0.6476) (-0.7007,-0.6195) (-0.7149,-0.5904) (-0.728,-0.5605) (-0.7401,-0.5298) (-0.751,-0.4983) (-0.7609,-0.4661) (-0.7696,-0.4331) (-0.7773,-0.3996) (-0.7837,-0.3654) (-0.789,-0.3308) (-0.7932,-0.2956) (-0.7962,-0.26) (-0.798,-0.224) (-0.7986,-0.1877) (-0.7981,-0.1511) (-0.7964,-0.1143) (-0.7935,-0.0773) (-0.7895,-0.0402) (-0.7843,-0.003) (-0.7779,0.0341) (-0.7704,0.0713) (-0.7618,0.1083) (-0.752,0.1451) (-0.7411,0.1817) (-0.7292,0.2181) (-0.7162,0.2542) (-0.7021,0.2898) (-0.687,0.3251) (-0.6708,0.3598) (-0.6537,0.3941) (-0.6357,0.4277) (-0.6166,0.4608) (-0.5967,0.4931) (-0.5759,0.5248) (-0.5543,0.5556) (-0.5318,0.5856) (-0.5086,0.6148) (-0.4846,0.6431) (-0.4599,0.6704) (-0.4345,0.6968) (-0.4085,0.7221) (-0.3819,0.7463) (-0.3547,0.7695) (-0.327,0.7915) (-0.2989,0.8124) (-0.2702,0.832) (-0.2412,0.8505) (-0.2119,0.8676) (-0.1822,0.8836) (-0.1522,0.8982) (-0.122,0.9115) (-0.0917,0.9234) (-0.0612,0.934) (-0.0306,0.9432) (0,0.9511)};
\draw[line width=1pt, clrN] (-0.8514,0.2619) -- (-0.4073,0.616);
\draw[->, line width=0.65pt, clrN] (-0.6293,0.439) -- (-0.5015,0.6636);
\node[dot, clrN] at (-0.6293,0.439) {};
\node[anchor=base west, inner sep=1pt, clrN] at (-0.3673,0.666) {$\ell_1^{+}$};
\draw[line width=0.35pt, clrN] plot[smooth] coordinates {(-0.0489,-0.9988) (-0.0549,-0.9977) (-0.0609,-0.9949) (-0.0667,-0.9904) (-0.0724,-0.9843) (-0.0781,-0.9765) (-0.0835,-0.9671) (-0.0889,-0.956) (-0.0941,-0.9434) (-0.0991,-0.9292) (-0.104,-0.9134) (-0.1087,-0.8961) (-0.1132,-0.8773) (-0.1175,-0.8571) (-0.1216,-0.8354) (-0.1256,-0.8123) (-0.1293,-0.7878) (-0.1328,-0.762) (-0.136,-0.735) (-0.1391,-0.7067) (-0.1419,-0.6772) (-0.1445,-0.6466) (-0.1468,-0.6149) (-0.1489,-0.5822) (-0.1507,-0.5485) (-0.1523,-0.5139) (-0.1537,-0.4784) (-0.1547,-0.4421) (-0.1555,-0.4051) (-0.1561,-0.3674) (-0.1564,-0.3291) (-0.1564,-0.2903) (-0.1562,-0.2509) (-0.1557,-0.2112) (-0.1549,-0.171) (-0.1539,-0.1306) (-0.1527,-0.09) (-0.1511,-0.0492) (-0.1493,-0.0084) (-0.1473,0.0325) (-0.145,0.0733) (-0.1425,0.114) (-0.1398,0.1545) (-0.1368,0.1947) (-0.1336,0.2347) (-0.1301,0.2742) (-0.1264,0.3133) (-0.1226,0.3518) (-0.1185,0.3897) (-0.1142,0.427) (-0.1097,0.4636) (-0.1051,0.4994) (-0.1003,0.5344) (-0.0953,0.5685) (-0.0901,0.6016) (-0.0848,0.6337) (-0.0794,0.6648) (-0.0738,0.6947) (-0.0681,0.7235) (-0.0622,0.7511) (-0.0563,0.7774) (-0.0503,0.8024) (-0.0442,0.8261) (-0.038,0.8483) (-0.0318,0.8692) (-0.0255,0.8886) (-0.0192,0.9065) (-0.0128,0.9229) (-0.0064,0.9378) (0,0.9511)};
\draw[line width=1pt, clrN] (-0.3089,0.1171) -- (0.0623,0.573);
\draw[->, line width=0.65pt, clrN] (-0.1233,0.345) -- (-0.0982,0.5887);
\node[dot, clrN] at (-0.1233,0.345) {};
\node[anchor=base west, inner sep=1pt, clrN] at (0.1023,0.613) {$\ell_2^{+}$};
\draw[line width=0.35pt, clrN] plot[smooth] coordinates {(0.2115,-0.9774) (0.2327,-0.972) (0.2535,-0.965) (0.2739,-0.9565) (0.2938,-0.9465) (0.3133,-0.9349) (0.3323,-0.9218) (0.3508,-0.9073) (0.3687,-0.8913) (0.386,-0.8739) (0.4026,-0.855) (0.4187,-0.8348) (0.434,-0.8133) (0.4487,-0.7904) (0.4627,-0.7663) (0.4759,-0.741) (0.4883,-0.7144) (0.5,-0.6868) (0.5108,-0.658) (0.5209,-0.6281) (0.5301,-0.5973) (0.5384,-0.5655) (0.5459,-0.5328) (0.5525,-0.4992) (0.5582,-0.4648) (0.5631,-0.4297) (0.567,-0.3939) (0.57,-0.3575) (0.5721,-0.3205) (0.5733,-0.283) (0.5736,-0.245) (0.5729,-0.2066) (0.5713,-0.1679) (0.5688,-0.129) (0.5654,-0.0898) (0.5611,-0.0505) (0.5559,-0.0111) (0.5498,0.0284) (0.5428,0.0677) (0.535,0.107) (0.5262,0.1461) (0.5167,0.185) (0.5063,0.2235) (0.4951,0.2617) (0.4831,0.2995) (0.4703,0.3368) (0.4568,0.3736) (0.4425,0.4097) (0.4276,0.4452) (0.4119,0.48) (0.3956,0.5141) (0.3787,0.5473) (0.3611,0.5796) (0.343,0.611) (0.3243,0.6414) (0.3051,0.6707) (0.2854,0.6991) (0.2652,0.7262) (0.2446,0.7522) (0.2237,0.7771) (0.2023,0.8006) (0.1807,0.8229) (0.1587,0.8439) (0.1365,0.8635) (0.1141,0.8817) (0.0915,0.8985) (0.0688,0.9139) (0.0459,0.9277) (0.023,0.9402) (0,0.9511)};
\draw[line width=1pt, clrN] (0.407,0.1293) -- (0.497,0.6428);
\draw[->, line width=0.65pt, clrN] (0.452,0.3861) -- (0.3602,0.6214);
\node[dot, clrN] at (0.452,0.3861) {};
\node[anchor=base west, inner sep=1pt, clrN] at (0.517,0.6928) {$\ell_3^{+}$};
\draw[line width=0.35pt, clrS] plot[smooth] coordinates {(-0.5886,-0.8085) (-0.6133,-0.7897) (-0.6373,-0.7699) (-0.6604,-0.7491) (-0.6827,-0.7275) (-0.7042,-0.7049) (-0.7248,-0.6814) (-0.7445,-0.657) (-0.7632,-0.6318) (-0.781,-0.6059) (-0.7978,-0.5791) (-0.8135,-0.5517) (-0.8283,-0.5235) (-0.842,-0.4947) (-0.8547,-0.4653) (-0.8663,-0.4352) (-0.8768,-0.4047) (-0.8862,-0.3736) (-0.8944,-0.342) (-0.9016,-0.31) (-0.9076,-0.2777) (-0.9125,-0.2449) (-0.9162,-0.2119) (-0.9188,-0.1786) (-0.9202,-0.1451) (-0.9205,-0.1114) (-0.9196,-0.0776) (-0.9175,-0.0436) (-0.9143,-0.0096) (-0.9099,0.0244) (-0.9044,0.0584) (-0.8978,0.0923) (-0.89,0.1261) (-0.8811,0.1597) (-0.8711,0.1931) (-0.86,0.2263) (-0.8478,0.2592) (-0.8346,0.2918) (-0.8203,0.324) (-0.8049,0.3558) (-0.7886,0.3871) (-0.7713,0.418) (-0.7529,0.4483) (-0.7337,0.4781) (-0.7135,0.5073) (-0.6924,0.5358) (-0.6705,0.5637) (-0.6477,0.5908) (-0.6241,0.6172) (-0.5997,0.6429) (-0.5745,0.6677) (-0.5487,0.6917) (-0.5221,0.7148) (-0.4949,0.737) (-0.4671,0.7583) (-0.4386,0.7786) (-0.4096,0.798) (-0.3801,0.8163) (-0.3502,0.8336) (-0.3198,0.8499) (-0.2889,0.8651) (-0.2578,0.8792) (-0.2263,0.8922) (-0.1945,0.9041) (-0.1624,0.9148) (-0.1302,0.9244) (-0.0978,0.9328) (-0.0653,0.9401) (-0.0326,0.9462) (0,0.9511)};
\draw[line width=1pt, clrS] (-0.6466,-0.8041) -- (-0.8041,-0.5573);
\draw[->, line width=0.65pt, clrS] (-0.7254,-0.6807) -- (-0.8727,-0.5051);
\node[dot, clrS] at (-0.7254,-0.6807) {};
\node[anchor=base west, inner sep=1pt, clrS] at (-0.7566,-0.8841) {$\ell_1^{-}$};
\draw[line width=0.35pt, clrS] plot[smooth] coordinates {(-0.1301,-0.9915) (-0.1449,-0.9887) (-0.1595,-0.9843) (-0.1739,-0.9783) (-0.1879,-0.9706) (-0.2017,-0.9614) (-0.2151,-0.9506) (-0.2282,-0.9382) (-0.2409,-0.9242) (-0.2532,-0.9087) (-0.265,-0.8918) (-0.2765,-0.8733) (-0.2875,-0.8534) (-0.298,-0.8322) (-0.308,-0.8095) (-0.3175,-0.7855) (-0.3265,-0.7602) (-0.3349,-0.7337) (-0.3428,-0.7059) (-0.3501,-0.677) (-0.3569,-0.6469) (-0.3631,-0.6158) (-0.3686,-0.5837) (-0.3736,-0.5506) (-0.3779,-0.5167) (-0.3816,-0.4818) (-0.3847,-0.4462) (-0.3872,-0.4098) (-0.389,-0.3728) (-0.3902,-0.3351) (-0.3907,-0.2969) (-0.3906,-0.2582) (-0.3898,-0.219) (-0.3884,-0.1796) (-0.3864,-0.1398) (-0.3837,-0.0997) (-0.3804,-0.0596) (-0.3765,-0.0193) (-0.3719,0.021) (-0.3667,0.0613) (-0.3609,0.1015) (-0.3546,0.1415) (-0.3476,0.1813) (-0.3401,0.2207) (-0.332,0.2599) (-0.3233,0.2985) (-0.3142,0.3367) (-0.3045,0.3744) (-0.2943,0.4114) (-0.2836,0.4477) (-0.2725,0.4833) (-0.2609,0.5181) (-0.2488,0.5521) (-0.2364,0.5851) (-0.2236,0.6172) (-0.2104,0.6483) (-0.1968,0.6783) (-0.183,0.7071) (-0.1688,0.7348) (-0.1544,0.7613) (-0.1397,0.7866) (-0.1247,0.8105) (-0.1096,0.8331) (-0.0943,0.8543) (-0.0788,0.8742) (-0.0632,0.8925) (-0.0475,0.9094) (-0.0317,0.9248) (-0.0159,0.9387) (0,0.9511)};
\draw[line width=1pt, clrS] (-0.3978,-0.9345) -- (-0.218,-0.6849);
\draw[->, line width=0.65pt, clrS] (-0.3079,-0.8097) -- (-0.3704,-0.6604);
\node[dot, clrS] at (-0.3079,-0.8097) {};
\node[anchor=base west, inner sep=1pt, clrS] at (-0.3978,-1.0645) {$\ell_2^{-}$};
\draw[line width=0.35pt, clrS] plot[smooth] coordinates {(0.1178,-0.993) (0.1315,-0.9906) (0.1449,-0.9865) (0.1581,-0.9808) (0.1711,-0.9734) (0.1837,-0.9645) (0.1961,-0.9539) (0.2081,-0.9418) (0.2198,-0.9281) (0.2311,-0.9129) (0.2421,-0.8961) (0.2526,-0.8779) (0.2628,-0.8582) (0.2725,-0.8372) (0.2817,-0.8147) (0.2905,-0.7909) (0.2988,-0.7657) (0.3066,-0.7393) (0.3139,-0.7117) (0.3206,-0.6829) (0.3269,-0.653) (0.3326,-0.622) (0.3377,-0.5899) (0.3423,-0.5569) (0.3464,-0.523) (0.3498,-0.4882) (0.3527,-0.4526) (0.355,-0.4162) (0.3567,-0.3792) (0.3578,-0.3415) (0.3583,-0.3033) (0.3583,-0.2645) (0.3576,-0.2254) (0.3563,-0.1858) (0.3545,-0.146) (0.3521,-0.1059) (0.3491,-0.0656) (0.3455,-0.0252) (0.3413,0.0152) (0.3366,0.0556) (0.3313,0.0959) (0.3255,0.1361) (0.3191,0.176) (0.3122,0.2156) (0.3048,0.2549) (0.2969,0.2937) (0.2885,0.3321) (0.2796,0.3699) (0.2702,0.4071) (0.2604,0.4437) (0.2502,0.4795) (0.2396,0.5145) (0.2285,0.5486) (0.2171,0.5819) (0.2053,0.6142) (0.1932,0.6454) (0.1808,0.6756) (0.1681,0.7047) (0.1551,0.7326) (0.1418,0.7593) (0.1283,0.7848) (0.1146,0.8089) (0.1007,0.8317) (0.0866,0.8532) (0.0724,0.8732) (0.0581,0.8918) (0.0436,0.9089) (0.0291,0.9245) (0.0146,0.9385) (0,0.9511)};
\draw[line width=1pt, clrS] (0.0743,-0.9748) -- (0.4904,-0.6509);
\draw[->, line width=0.65pt, clrS] (0.2824,-0.8129) -- (0.3398,-0.6642);
\node[dot, clrS] at (0.2824,-0.8129) {};
\node[anchor=base west, inner sep=1pt, clrS] at (0.0443,-1.1048) {$\ell_3^{-}$};
\draw[thin, clrN] plot[smooth] coordinates {(-0.5457,0.5859) (-0.5437,0.585) (-0.5417,0.5841) (-0.5398,0.5831) (-0.5379,0.5821) (-0.536,0.581) (-0.5342,0.5798) (-0.5324,0.5786) (-0.5306,0.5774) (-0.5289,0.5761) (-0.5272,0.5747) (-0.5256,0.5733) (-0.524,0.5718) (-0.5224,0.5703) (-0.5209,0.5687) (-0.5194,0.5671) (-0.518,0.5654) (-0.5166,0.5637) (-0.5152,0.5619) (-0.5139,0.5601) (-0.5127,0.5582) (-0.5115,0.5563) (-0.5103,0.5543) (-0.5092,0.5523) (-0.5081,0.5502) (-0.5071,0.5481) (-0.5061,0.546) (-0.5052,0.5438) (-0.5043,0.5416) (-0.5035,0.5393)};
\node[anchor=base west, inner sep=1pt, clrN] at (-0.6515,0.6736) {$\overrightarrow{u_1}^{\,+}$};
\draw[thin, clrS] plot[smooth] coordinates {(0.3199,-0.7157) (0.323,-0.7153) (0.3262,-0.715) (0.3293,-0.7147) (0.3324,-0.7145) (0.3355,-0.7143) (0.3385,-0.7141) (0.3415,-0.714) (0.3445,-0.7139) (0.3475,-0.7139) (0.3505,-0.7138) (0.3534,-0.7139) (0.3563,-0.714) (0.3592,-0.7141) (0.362,-0.7142) (0.3648,-0.7144) (0.3676,-0.7146) (0.3704,-0.7149) (0.3731,-0.7152) (0.3757,-0.7156) (0.3784,-0.7159) (0.381,-0.7164) (0.3835,-0.7168) (0.3861,-0.7173) (0.3885,-0.7179) (0.391,-0.7184) (0.3934,-0.719) (0.3957,-0.7197) (0.398,-0.7204) (0.4003,-0.7211)};
\node[anchor=base west, inner sep=1pt, clrS] at (0.1798,-0.6842) {$\overrightarrow{u_3}^{\,-}$};
\end{tikzpicture}
\caption{A few of the lines of a pinwheel configuration. Three tangent points lie
on the circle of latitude $\varphi$ (dark green) and three on the circle of
latitude $-\varphi$ (light blue). Each meridian is drawn in the colour of the
group of its tangent point. The two groups are staggered in longitude by the
free offset $\psi$. At each tangent point a short piece of the corresponding
tangent line is drawn. The arrow at each tangent point is the north-pointing
unit tangent vector, i.e.\ the direction of increasing latitude. Note that it
points towards the north pole on the southern circle as well. Every line is
obtained by rotating this vector eastward by the same angle $\delta$ within the
tangent plane. The small arcs at $\ell_1^{+}$ and $\ell_3^{-}$ mark this angle.}
\label{fig:pinwheel}
\end{figure}

\subsection{The two distance formulas} 
By the rotational symmetry about the $z$-axis it suffices to compute the distance between a
tangent line at latitude $\varphi$ and longitude $0$ and a tangent line at latitude
$\pm\varphi$ and longitude $\Delta$.

Let $\varphi\in(0,\frac\pi2)$ and $\Delta\in[0,2\pi)$. Let $P_1$ be the point of the unit sphere
with latitude $\varphi$ and longitude $0$, and $P_2^{\pm}$ the point with latitude $\pm\varphi$
and longitude $\Delta$, so that
\begin{align*}
\overrightarrow{p_1}&=\langle \cos\varphi,0,\sin\varphi\rangle,\\
\overrightarrow{p_2}^{\pm}&=\langle \cos\Delta\cos\varphi,\sin\Delta\cos\varphi,\pm\sin\varphi\rangle.
\end{align*}

Let $\overrightarrow{u_1}$ and $\overrightarrow{u_2}^{+}$ be the unit tangent vectors at $P_1$ and $P_2^+$ pointing north, namely
\begin{align*}
\overrightarrow{u_1} &=\langle -\sin \varphi,0,\cos\varphi\rangle;\\
\overrightarrow{u_2}^{+}&=\langle -\cos\Delta\sin \varphi,-\sin\Delta\sin \varphi,\cos\varphi\rangle.
\end{align*}
Let $\overrightarrow{t_2}=\langle -\sin\Delta,\cos\Delta,0\rangle$ be the unit tangent vector at $P_2^+$ pointing east. Then $\{\overrightarrow{t_2},\overrightarrow{u_2}^+\}$ is an orthonormal basis of the tangent plane at $P_2^+$. For $\delta\in(0,\frac\pi2)$ let
\begin{align}
\overrightarrow{v_2}^+&=\cos\delta\overrightarrow{u_2}^++\sin\delta\overrightarrow{t_2}\nonumber \\&=
\langle 
-\sin\delta\sin\Delta
-\cos\delta\cos\Delta\sin\varphi,
\sin\delta\cos\Delta-\cos\delta\sin\Delta\sin\varphi,\cos\delta\cos\varphi
\rangle. \label{eqn:vectorV2P}
\end{align}

Setting $\Delta=0$ in \eqref{eqn:vectorV2P} gives the corresponding vector at $P_1$,
\begin{align}
\overrightarrow{v_1}&=\langle  -\sin\varphi\cos\delta,
\sin\delta,
\cos\varphi\cos\delta
\rangle,
\label{eqn:vectorV1}
\end{align}
and replacing $\varphi$ by $-\varphi$ in \eqref{eqn:vectorV2P} gives the corresponding vector at $P_2^-$,
\begin{align} 
\overrightarrow{v_2}^-&=\langle -\sin\delta\sin\Delta+\sin\varphi\cos\delta\cos\Delta,
\sin\delta\cos\Delta+\sin\varphi\cos\delta\sin\Delta,
\cos\varphi\cos\delta
\rangle.
\label{eqn:vectorV2M}
\end{align}

Let $\ell^+(0)$, $\ell^+(\Delta)$ and $\ell^-(\Delta)$ be the lines through $P_1$, $P_2^+$ and $P_2^-$ with directions $\overrightarrow{v_1}$, $\overrightarrow{v_2}^+$ and $\overrightarrow{v_2}^-$.
The argument records the longitude.
The lines of Definition~\ref{def:pinwheel} are $\ell_j^+=\ell^+(j\gamma)$ and $\ell_j^-=\ell^-(j\gamma+\psi)$.

\begin{figure}[t]
\centering
\begin{tikzpicture}[scale=3.4, line cap=round, line join=round, >={stealth}, dot/.style={circle, fill, inner sep=1.1pt}]
\draw[line width=0.5pt] (0,0) circle (1);
\draw[semithick] plot[smooth] coordinates {(-1,0) (-0.9994,-0.0109) (-0.9975,-0.0218) (-0.9944,-0.0327) (-0.99,-0.0435) (-0.9845,-0.0543) (-0.9777,-0.065) (-0.9696,-0.0756) (-0.9604,-0.0861) (-0.95,-0.0965) (-0.9383,-0.1068) (-0.9256,-0.117) (-0.9116,-0.127) (-0.8965,-0.1369) (-0.8804,-0.1466) (-0.8631,-0.1561) (-0.8447,-0.1654) (-0.8253,-0.1745) (-0.8048,-0.1834) (-0.7834,-0.1921) (-0.761,-0.2005) (-0.7376,-0.2087) (-0.7133,-0.2166) (-0.6881,-0.2242) (-0.6621,-0.2316) (-0.6352,-0.2387) (-0.6076,-0.2454) (-0.5792,-0.2519) (-0.5501,-0.2581) (-0.5202,-0.2639) (-0.4898,-0.2694) (-0.4587,-0.2746) (-0.4271,-0.2794) (-0.3949,-0.2839) (-0.3622,-0.288) (-0.3291,-0.2918) (-0.2956,-0.2952) (-0.2617,-0.2983) (-0.2274,-0.3009) (-0.1929,-0.3032) (-0.1582,-0.3051) (-0.1232,-0.3067) (-0.0881,-0.3078) (-0.0529,-0.3086) (-0.0176,-0.309) (0.0176,-0.309) (0.0529,-0.3086) (0.0881,-0.3078) (0.1232,-0.3067) (0.1582,-0.3051) (0.1929,-0.3032) (0.2274,-0.3009) (0.2617,-0.2983) (0.2956,-0.2952) (0.3291,-0.2918) (0.3622,-0.288) (0.3949,-0.2839) (0.4271,-0.2794) (0.4587,-0.2746) (0.4898,-0.2694) (0.5202,-0.2639) (0.5501,-0.2581) (0.5792,-0.2519) (0.6076,-0.2454) (0.6352,-0.2387) (0.6621,-0.2316) (0.6881,-0.2242) (0.7133,-0.2166) (0.7376,-0.2087) (0.761,-0.2005) (0.7834,-0.1921) (0.8048,-0.1834) (0.8253,-0.1745) (0.8447,-0.1654) (0.8631,-0.1561) (0.8804,-0.1466) (0.8965,-0.1369) (0.9116,-0.127) (0.9256,-0.117) (0.9383,-0.1068) (0.95,-0.0965) (0.9604,-0.0861) (0.9696,-0.0756) (0.9777,-0.065) (0.9845,-0.0543) (0.99,-0.0435) (0.9944,-0.0327) (0.9975,-0.0218) (0.9994,-0.0109) (1,0)};
\draw[thin, dashed] plot[smooth] coordinates {(1,0) (0.9994,0.0109) (0.9975,0.0218) (0.9944,0.0327) (0.99,0.0435) (0.9845,0.0543) (0.9777,0.065) (0.9696,0.0756) (0.9604,0.0861) (0.95,0.0965) (0.9383,0.1068) (0.9256,0.117) (0.9116,0.127) (0.8965,0.1369) (0.8804,0.1466) (0.8631,0.1561) (0.8447,0.1654) (0.8253,0.1745) (0.8048,0.1834) (0.7834,0.1921) (0.761,0.2005) (0.7376,0.2087) (0.7133,0.2166) (0.6881,0.2242) (0.6621,0.2316) (0.6352,0.2387) (0.6076,0.2454) (0.5792,0.2519) (0.5501,0.2581) (0.5202,0.2639) (0.4898,0.2694) (0.4587,0.2746) (0.4271,0.2794) (0.3949,0.2839) (0.3622,0.288) (0.3291,0.2918) (0.2956,0.2952) (0.2617,0.2983) (0.2274,0.3009) (0.1929,0.3032) (0.1582,0.3051) (0.1232,0.3067) (0.0881,0.3078) (0.0529,0.3086) (0.0176,0.309) (-0.0176,0.309) (-0.0529,0.3086) (-0.0881,0.3078) (-0.1232,0.3067) (-0.1582,0.3051) (-0.1929,0.3032) (-0.2274,0.3009) (-0.2617,0.2983) (-0.2956,0.2952) (-0.3291,0.2918) (-0.3622,0.288) (-0.3949,0.2839) (-0.4271,0.2794) (-0.4587,0.2746) (-0.4898,0.2694) (-0.5202,0.2639) (-0.5501,0.2581) (-0.5792,0.2519) (-0.6076,0.2454) (-0.6352,0.2387) (-0.6621,0.2316) (-0.6881,0.2242) (-0.7133,0.2166) (-0.7376,0.2087) (-0.761,0.2005) (-0.7834,0.1921) (-0.8048,0.1834) (-0.8253,0.1745) (-0.8447,0.1654) (-0.8631,0.1561) (-0.8804,0.1466) (-0.8965,0.1369) (-0.9116,0.127) (-0.9256,0.117) (-0.9383,0.1068) (-0.95,0.0965) (-0.9604,0.0861) (-0.9696,0.0756) (-0.9777,0.065) (-0.9845,0.0543) (-0.99,0.0435) (-0.9944,0.0327) (-0.9975,0.0218) (-0.9994,0.0109) (-1,0)};
\draw[line width=1pt] plot[smooth] coordinates {(-0.7622,0.6473) (-0.7698,0.6376) (-0.776,0.6278) (-0.781,0.6179) (-0.7846,0.608) (-0.787,0.598) (-0.788,0.588) (-0.7876,0.578) (-0.786,0.568) (-0.783,0.5581) (-0.7787,0.5482) (-0.7731,0.5383) (-0.7661,0.5286) (-0.7579,0.5189) (-0.7484,0.5093) (-0.7377,0.4999) (-0.7257,0.4906) (-0.7125,0.4815) (-0.698,0.4725) (-0.6824,0.4638) (-0.6657,0.4552) (-0.6478,0.4469) (-0.6288,0.4388) (-0.6088,0.4309) (-0.5878,0.4233) (-0.5657,0.416) (-0.5427,0.409) (-0.5188,0.4022) (-0.494,0.3958) (-0.4684,0.3897) (-0.442,0.3839) (-0.4148,0.3785) (-0.387,0.3734) (-0.3585,0.3687) (-0.3294,0.3643) (-0.2997,0.3603) (-0.2695,0.3567) (-0.2389,0.3535) (-0.2079,0.3506) (-0.1765,0.3482) (-0.1448,0.3462) (-0.1129,0.3445) (-0.0808,0.3433) (-0.0485,0.3425) (-0.0162,0.3421) (0.0162,0.3421) (0.0485,0.3425) (0.0808,0.3433) (0.1129,0.3445) (0.1448,0.3462) (0.1765,0.3482) (0.2079,0.3506) (0.2389,0.3535) (0.2695,0.3567) (0.2997,0.3603) (0.3294,0.3643) (0.3585,0.3687) (0.387,0.3734) (0.4148,0.3785) (0.442,0.3839) (0.4684,0.3897) (0.494,0.3958) (0.5188,0.4022) (0.5427,0.409) (0.5657,0.416) (0.5878,0.4233) (0.6088,0.4309) (0.6288,0.4388) (0.6478,0.4469) (0.6657,0.4552) (0.6824,0.4638) (0.698,0.4725) (0.7125,0.4815) (0.7257,0.4906) (0.7377,0.4999) (0.7484,0.5093) (0.7579,0.5189) (0.7661,0.5286) (0.7731,0.5383) (0.7787,0.5482) (0.783,0.5581) (0.786,0.568) (0.7876,0.578) (0.788,0.588) (0.787,0.598) (0.7846,0.608) (0.781,0.6179) (0.776,0.6278) (0.7698,0.6376) (0.7622,0.6473)};
\draw[thin, dashed] plot[smooth] coordinates {(0.7622,0.6473) (0.756,0.6543) (0.7491,0.6611) (0.7415,0.6679) (0.7333,0.6747) (0.7245,0.6813) (0.715,0.6879) (0.7049,0.6944) (0.6942,0.7008) (0.6829,0.707) (0.671,0.7132) (0.6585,0.7193) (0.6454,0.7252) (0.6318,0.7311) (0.6176,0.7368) (0.6029,0.7423) (0.5876,0.7478) (0.5719,0.7531) (0.5556,0.7582) (0.5389,0.7632) (0.5217,0.768) (0.504,0.7727) (0.4859,0.7772) (0.4674,0.7816) (0.4484,0.7858) (0.4291,0.7898) (0.4094,0.7936) (0.3894,0.7972) (0.369,0.8007) (0.3482,0.804) (0.3272,0.8071) (0.3059,0.8099) (0.2843,0.8126) (0.2625,0.8151) (0.2404,0.8174) (0.2182,0.8195) (0.1957,0.8214) (0.1731,0.8231) (0.1503,0.8246) (0.1274,0.8258) (0.1044,0.8269) (0.0813,0.8277) (0.0581,0.8284) (0.0349,0.8288) (0.0116,0.829) (-0.0116,0.829) (-0.0349,0.8288) (-0.0581,0.8284) (-0.0813,0.8277) (-0.1044,0.8269) (-0.1274,0.8258) (-0.1503,0.8246) (-0.1731,0.8231) (-0.1957,0.8214) (-0.2182,0.8195) (-0.2404,0.8174) (-0.2625,0.8151) (-0.2843,0.8126) (-0.3059,0.8099) (-0.3272,0.8071) (-0.3482,0.804) (-0.369,0.8007) (-0.3894,0.7972) (-0.4094,0.7936) (-0.4291,0.7898) (-0.4484,0.7858) (-0.4674,0.7816) (-0.4859,0.7772) (-0.504,0.7727) (-0.5217,0.768) (-0.5389,0.7632) (-0.5556,0.7582) (-0.5719,0.7531) (-0.5876,0.7478) (-0.6029,0.7423) (-0.6176,0.7368) (-0.6318,0.7311) (-0.6454,0.7252) (-0.6585,0.7193) (-0.671,0.7132) (-0.6829,0.707) (-0.6942,0.7008) (-0.7049,0.6944) (-0.715,0.6879) (-0.7245,0.6813) (-0.7333,0.6747) (-0.7415,0.6679) (-0.7491,0.6611) (-0.756,0.6543) (-0.7622,0.6473)};
\draw[semithick] plot[smooth] coordinates {(-0.1426,-0.9898) (-0.1551,-0.9875) (-0.1673,-0.9842) (-0.1794,-0.98) (-0.1914,-0.9748) (-0.2031,-0.9686) (-0.2147,-0.9615) (-0.226,-0.9534) (-0.2371,-0.9444) (-0.248,-0.9344) (-0.2586,-0.9236) (-0.269,-0.9118) (-0.2791,-0.8991) (-0.2889,-0.8855) (-0.2985,-0.8711) (-0.3078,-0.8558) (-0.3167,-0.8396) (-0.3253,-0.8226) (-0.3337,-0.8048) (-0.3416,-0.7862) (-0.3493,-0.7669) (-0.3566,-0.7467) (-0.3635,-0.7259) (-0.3701,-0.7043) (-0.3764,-0.682) (-0.3822,-0.6591) (-0.3877,-0.6355) (-0.3928,-0.6112) (-0.3975,-0.5864) (-0.4018,-0.561) (-0.4057,-0.535) (-0.4092,-0.5085) (-0.4124,-0.4815) (-0.4151,-0.4541) (-0.4174,-0.4261) (-0.4192,-0.3978) (-0.4207,-0.3691) (-0.4218,-0.34) (-0.4224,-0.3105) (-0.4226,-0.2808) (-0.4224,-0.2508) (-0.4218,-0.2205) (-0.4208,-0.19) (-0.4193,-0.1593) (-0.4175,-0.1285) (-0.4152,-0.0975) (-0.4125,-0.0665) (-0.4094,-0.0354) (-0.4059,-0.0042) (-0.402,0.0269) (-0.3977,0.0581) (-0.393,0.0891) (-0.388,0.1201) (-0.3825,0.151) (-0.3767,0.1817) (-0.3704,0.2123) (-0.3639,0.2426) (-0.3569,0.2727) (-0.3497,0.3025) (-0.342,0.332) (-0.334,0.3612) (-0.3257,0.39) (-0.3171,0.4185) (-0.3082,0.4465) (-0.2989,0.4741) (-0.2894,0.5013) (-0.2796,0.5279) (-0.2695,0.554) (-0.2591,0.5796) (-0.2485,0.6046) (-0.2376,0.629) (-0.2265,0.6527) (-0.2152,0.6759) (-0.2037,0.6983) (-0.1919,0.7201) (-0.18,0.7412) (-0.1679,0.7615) (-0.1556,0.7811) (-0.1432,0.7999) (-0.1307,0.8179) (-0.118,0.8351) (-0.1052,0.8515) (-0.0922,0.867) (-0.0792,0.8817) (-0.0661,0.8955) (-0.053,0.9084) (-0.0398,0.9205) (-0.0265,0.9316) (-0.0133,0.9418) (0,0.9511)};
\draw[thin, dashed] plot[smooth] coordinates {(0,-0.9511) (-0.005,-0.9543) (-0.01,-0.9574) (-0.015,-0.9604) (-0.0201,-0.9633) (-0.0251,-0.966) (-0.0301,-0.9686) (-0.0351,-0.971) (-0.0401,-0.9733) (-0.0451,-0.9755) (-0.05,-0.9775) (-0.055,-0.9794) (-0.06,-0.9812) (-0.065,-0.9828) (-0.0699,-0.9843) (-0.0749,-0.9856) (-0.0798,-0.9868) (-0.0847,-0.9879) (-0.0896,-0.9888) (-0.0945,-0.9896) (-0.0994,-0.9902) (-0.1043,-0.9908) (-0.1091,-0.9911) (-0.114,-0.9913) (-0.1188,-0.9914) (-0.1236,-0.9914) (-0.1284,-0.9912) (-0.1331,-0.9909) (-0.1379,-0.9904) (-0.1426,-0.9898)};
\draw[semithick] plot[smooth] coordinates {(0.1555,-0.9878) (0.1688,-0.9852) (0.182,-0.9817) (0.1949,-0.9771) (0.2077,-0.9717) (0.2203,-0.9652) (0.2326,-0.9578) (0.2447,-0.9495) (0.2566,-0.9402) (0.2682,-0.93) (0.2796,-0.9189) (0.2906,-0.9069) (0.3014,-0.894) (0.3119,-0.8802) (0.3221,-0.8655) (0.332,-0.85) (0.3416,-0.8337) (0.3508,-0.8165) (0.3596,-0.7986) (0.3681,-0.7798) (0.3763,-0.7603) (0.3841,-0.74) (0.3915,-0.719) (0.3985,-0.6973) (0.4051,-0.6749) (0.4114,-0.6519) (0.4172,-0.6282) (0.4226,-0.6039) (0.4276,-0.579) (0.4322,-0.5535) (0.4363,-0.5275) (0.44,-0.5009) (0.4433,-0.4739) (0.4462,-0.4464) (0.4486,-0.4184) (0.4506,-0.3901) (0.4521,-0.3614) (0.4532,-0.3323) (0.4538,-0.3029) (0.454,-0.2731) (0.4537,-0.2432) (0.453,-0.2129) (0.4519,-0.1825) (0.4503,-0.1519) (0.4483,-0.1211) (0.4458,-0.0902) (0.4429,-0.0593) (0.4395,-0.0283) (0.4357,0.0028) (0.4315,0.0339) (0.4269,0.0649) (0.4218,0.0958) (0.4163,0.1267) (0.4105,0.1574) (0.4042,0.188) (0.3975,0.2184) (0.3904,0.2486) (0.383,0.2785) (0.3751,0.3082) (0.3669,0.3376) (0.3584,0.3666) (0.3494,0.3952) (0.3402,0.4235) (0.3306,0.4514) (0.3206,0.4788) (0.3104,0.5058) (0.2999,0.5322) (0.289,0.5581) (0.2779,0.5835) (0.2665,0.6083) (0.2549,0.6325) (0.2429,0.6561) (0.2308,0.679) (0.2184,0.7013) (0.2058,0.7229) (0.193,0.7437) (0.1801,0.7639) (0.1669,0.7832) (0.1536,0.8019) (0.1401,0.8197) (0.1265,0.8367) (0.1127,0.8529) (0.0989,0.8682) (0.0849,0.8827) (0.0709,0.8964) (0.0568,0.9091) (0.0427,0.921) (0.0285,0.9319) (0.0142,0.942) (0,0.9511)};
\draw[thin, dashed] plot[smooth] coordinates {(0,-0.9511) (0.0055,-0.9543) (0.0109,-0.9574) (0.0164,-0.9604) (0.0219,-0.9632) (0.0274,-0.9659) (0.0328,-0.9685) (0.0383,-0.9709) (0.0437,-0.9732) (0.0492,-0.9753) (0.0546,-0.9773) (0.06,-0.9791) (0.0655,-0.9808) (0.0709,-0.9824) (0.0763,-0.9838) (0.0817,-0.9851) (0.087,-0.9862) (0.0924,-0.9872) (0.0978,-0.988) (0.1031,-0.9887) (0.1084,-0.9893) (0.1137,-0.9897) (0.119,-0.99) (0.1243,-0.9901) (0.1296,-0.9901) (0.1348,-0.9899) (0.14,-0.9896) (0.1452,-0.9892) (0.1504,-0.9886) (0.1555,-0.9878)};
\draw[->, thin] (0,0) -- (-0.6128,-0.4061);
\node[anchor=base west, inner sep=1pt] at (-0.6628,-0.5061) {$x$};
\draw[->, thin] (0,0) -- (1.4048,-0.2024);
\node[anchor=base west, inner sep=1pt] at (1.4548,-0.2224) {$y$};
\draw[->, thin] (0,0) -- (0,1.2839);
\node[anchor=base west, inner sep=1pt] at (-0.02,1.3339) {$z$};
\draw[very thin] (0,0) -- (-0.4226,-0.2801);
\draw[very thin] (0,0) -- (0.454,-0.2753);
\draw[very thin] (0,0) -- (0.3577,0.3686);
\draw[very thin, dotted] (0,0) -- (-0.333,0.3648);
\draw[thin] plot[smooth] coordinates {(-0.1268,-0.084) (-0.1204,-0.0849) (-0.114,-0.0858) (-0.1075,-0.0865) (-0.101,-0.0873) (-0.0944,-0.088) (-0.0877,-0.0887) (-0.081,-0.0893) (-0.0743,-0.0898) (-0.0675,-0.0903) (-0.0607,-0.0908) (-0.0538,-0.0912) (-0.0469,-0.0916) (-0.04,-0.0919) (-0.0331,-0.0921) (-0.0261,-0.0924) (-0.0192,-0.0925) (-0.0122,-0.0926) (-0.0052,-0.0927) (0.0017,-0.0927) (0.0087,-0.0927) (0.0157,-0.0926) (0.0227,-0.0924) (0.0296,-0.0923) (0.0366,-0.092) (0.0435,-0.0917) (0.0504,-0.0914) (0.0572,-0.091) (0.0641,-0.0906) (0.0709,-0.0901) (0.0776,-0.0895) (0.0844,-0.089) (0.091,-0.0883) (0.0977,-0.0877) (0.1042,-0.0869) (0.1108,-0.0862) (0.1172,-0.0853) (0.1236,-0.0845) (0.1299,-0.0836) (0.1362,-0.0826)};
\node[anchor=base west, inner sep=1pt] at (-0.0323,-0.0859) {$\Delta$};
\draw[thin] plot[smooth] coordinates {(0.1907,-0.1156) (0.1906,-0.1088) (0.1906,-0.102) (0.1904,-0.0951) (0.1902,-0.0882) (0.19,-0.0813) (0.1897,-0.0744) (0.1893,-0.0674) (0.1889,-0.0604) (0.1884,-0.0534) (0.1879,-0.0464) (0.1873,-0.0393) (0.1867,-0.0323) (0.186,-0.0252) (0.1853,-0.0182) (0.1845,-0.0111) (0.1837,-0.004) (0.1828,0.003) (0.1818,0.0101) (0.1808,0.0172) (0.1798,0.0242) (0.1786,0.0313) (0.1775,0.0383) (0.1763,0.0454) (0.175,0.0524) (0.1737,0.0594) (0.1723,0.0664) (0.1709,0.0734) (0.1695,0.0803) (0.168,0.0872) (0.1664,0.0941) (0.1648,0.101) (0.1631,0.1079) (0.1614,0.1147) (0.1597,0.1215) (0.1579,0.1282) (0.156,0.1349) (0.1542,0.1416) (0.1522,0.1482) (0.1503,0.1548)};
\node[anchor=base west, inner sep=1pt] at (0.0803,0.0007) {$\varphi$};
\draw[->, line width=0.9pt] (-0.333,0.3648) -- (-0.1899,0.8719);
\node[anchor=base west, inner sep=1pt] at (-0.2099,0.9019) {$\overrightarrow{u_1}$};
\draw[->, line width=0.9pt] (-0.333,0.3648) -- (0.0585,0.7509);
\node[anchor=base west, inner sep=1pt] at (0.0185,0.6309) {$\overrightarrow{v_1}$};
\draw[thin] plot[smooth] coordinates {(-0.255,0.6414) (-0.2496,0.6406) (-0.2442,0.6396) (-0.2389,0.6386) (-0.2336,0.6374) (-0.2284,0.6361) (-0.2232,0.6348) (-0.218,0.6333) (-0.2129,0.6317) (-0.2079,0.63) (-0.2029,0.6282) (-0.1979,0.6263) (-0.193,0.6243) (-0.1881,0.6222) (-0.1833,0.62) (-0.1786,0.6177) (-0.1739,0.6153) (-0.1693,0.6128) (-0.1648,0.6102) (-0.1603,0.6075) (-0.1559,0.6047) (-0.1515,0.6018) (-0.1472,0.5988) (-0.143,0.5958) (-0.1389,0.5926) (-0.1349,0.5893) (-0.1309,0.586) (-0.127,0.5826) (-0.1232,0.579) (-0.1195,0.5754)};
\node[anchor=base west, inner sep=1pt] at (-0.2503,0.5436) {$\delta$};
\draw[->, line width=0.9pt] (0.3577,0.3686) -- (0.204,0.874);
\node[anchor=base west, inner sep=1pt] at (0.204,0.874) {$\overrightarrow{u_2}^{\,+}$};
\draw[->, line width=0.9pt] (0.3577,0.3686) -- (0.8478,0.4457);
\node[anchor=base west, inner sep=1pt] at (0.7978,0.3457) {$\overrightarrow{t_2}$};
\draw[->, line width=0.9pt] (0.3577,0.3686) -- (0.4957,0.8345);
\node[anchor=base west, inner sep=1pt] at (0.4657,0.6345) {$\overrightarrow{v_2}^{\,+}$};
\draw[thin] plot[smooth] coordinates {(0.2739,0.6442) (0.2792,0.645) (0.2846,0.6457) (0.29,0.6463) (0.2954,0.6467) (0.3008,0.6471) (0.3063,0.6473) (0.3117,0.6474) (0.3172,0.6474) (0.3228,0.6473) (0.3283,0.6471) (0.3338,0.6468) (0.3394,0.6464) (0.3449,0.6459) (0.3505,0.6452) (0.3561,0.6445) (0.3616,0.6436) (0.3672,0.6426) (0.3727,0.6415) (0.3783,0.6404) (0.3838,0.6391) (0.3894,0.6377) (0.3949,0.6362) (0.4004,0.6345) (0.4059,0.6328) (0.4114,0.631) (0.4168,0.6291) (0.4222,0.6271) (0.4276,0.6249) (0.433,0.6227)};
\node[anchor=base west, inner sep=1pt] at (0.3303,0.5592) {$\delta$};
\node[dot] at (-0.333,0.3648) {};
\node[anchor=base west, inner sep=1pt] at (-0.453,0.4148) {$P_1$};
\node[dot] at (0.3577,0.3686) {};
\node[anchor=base west, inner sep=1pt] at (0.4277,0.2986) {$P_2^{+}$};
\node[dot] at (0.3577,-0.8025) {};
\node[anchor=base west, inner sep=1pt] at (0.3777,-0.8125) {$P_2^{-}$};
\end{tikzpicture}
\caption{The points $P_1$, $P_2^{\pm}$ and the tangent frames from
Lemma~\ref{lem:distances}. The thick circle is the latitude circle at latitude
$\varphi$ containing $P_1$ and $P_2^{+}$. The meridians through $P_1$ and through
$P_2^{\pm}$ and the equator are drawn with hidden parts dashed. The vector $\overrightarrow{u_1}$ lies
along the meridian through $P_1$, as $\overrightarrow{u_2}^{\,+}$ does along the meridian through
$P_2^{+}$. The vectors
$\overrightarrow{u_1}$ and $\overrightarrow{u_2}^{\,+}$ point in the direction of increasing latitude,
$\overrightarrow{t_2}$ points eastward, and $\overrightarrow{v_1}$, $\overrightarrow{v_2}^{\,+}$ are obtained by
rotating $\overrightarrow{u_1}$, $\overrightarrow{u_2}^{\,+}$ by the angle $\delta$ within the respective
tangent planes, as in \eqref{eqn:vectorV2P}. The vector $\overrightarrow{v_2}^{\,-}$ at
$P_2^{-}$ (not drawn) is obtained from \eqref{eqn:vectorV2M}.}
\label{fig:lemmaDistances}
\end{figure}

 \begin{lemma}
 \label{lem:distances} Assume that $\varphi\in (0,\frac{\pi}{2})$, $\Delta\in [0,2\pi)$, 
 and $\delta\in (0,\frac{\pi}{2})$. The squares of the distances between $\ell^+(0)$ and $\ell^{\pm}(\Delta)$ satisfy
 \begin{align}
 \dist^2(\ell^+(0),\ell^+(\Delta))&= 
 \frac{4\cos^4\varphi\tan^2\delta \sin^2\frac{\Delta}2 }{
\left(\tan^2\delta+ \sin^2\varphi\right)
\cdot 
\left[ 1+\tan^2\delta -\sin^2\frac{\Delta}{2} \left(\tan^2\delta+ \sin^2\varphi\right)\right]
};
\label{eq:lemmaDistl1l2Plus}\\
 \dist^2(\ell^+(0),\ell^-(\Delta))&= 
 \frac{4\left(\sin\varphi\tan\delta +\tan\frac{\Delta}{2}\right)^2}{
 \cos^2\varphi(1+\tan^2\delta) + \left(    \sin\varphi\tan\delta+ \tan\frac{\Delta}{2}
\right)^2}\quad\text{for }\Delta\neq\pi,
 \label{eq:lemmaDistl1l2Minus}
 \end{align}
 while $\dist(\ell^+(0),\ell^-(\Delta))=2$ for $\Delta=\pi$.
 \end{lemma}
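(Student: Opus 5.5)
The plan is to apply Lemma~\ref{lem:linedist} directly. For non-parallel directions it gives $\dist^2=\bigl((\overrightarrow{p_2}-\overrightarrow{p_1})\cdot(\overrightarrow{v_1}\times\overrightarrow{v_2})\bigr)^2/\|\overrightarrow{v_1}\times\overrightarrow{v_2}\|^2$. I will use $\|\overrightarrow{v_1}\times\overrightarrow{v_2}\|^2=1-(\overrightarrow{v_1}\cdot\overrightarrow{v_2})^2$, since both vectors are unit. So I need two scalars for each of the two cases $\pm$: the triple product $T^\pm=(\overrightarrow{p_2}^{\pm}-\overrightarrow{p_1})\cdot(\overrightarrow{v_1}\times\overrightarrow{v_2}^{\pm})$ and the inner product $c^\pm=\overrightarrow{v_1}\cdot\overrightarrow{v_2}^{\pm}$. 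Both are explicit trigonometric polynomials in $\varphi,\delta,\Delta$ read off from \eqref{eqn:vectorV2P}, \eqref{eqn:vectorV1}, \eqref{eqn:vectorV2M}. After that, the work is simplification into the stated rational forms. I will divide numerator and denominator by $\cos^2\delta$ (respectively $\cos^4\delta$) to pass to $\tan\delta$, and write $\cos\Delta=1-2\sin^2\frac\Delta2$, $\sin\Delta=2\sin\frac\Delta2\cos\frac\Delta2$.

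For the same-latitude case I expect $c^+=1-2\sin^2\frac\Delta2\,(\sin^2\delta+\cos^2\delta\sin^2\varphi)$. This is the rotation-angle cosine, and I will check it by a short expansion. Then $1-(c^+)^2=4s^2(\cdot)\bigl(1-s^2(\cdot)\bigr)$ with $s=\sin\frac\Delta2$ and $(\cdot)=\sin^2\delta+\cos^2\delta\sin^2\varphi$. After dividing by $\cos^2\delta$ this matches the factors $(\tan^2\delta+\sin^2\varphi)$ and $[1+\tan^2\delta-s^2(\tan^2\delta+\sin^2\varphi)]$ in \eqref{eq:lemmaDistl1l2Plus}. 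For the triple product, the difference $\overrightarrow{p_2}^+-\overrightarrow{p_1}=\cos\varphi\langle\cos\Delta-1,\sin\Delta,0\rangle$ is horizontal, so only the first two components of $\overrightarrow{v_1}\times\overrightarrow{v_2}^+$ matter. I expect the result to factor as $T^+=\pm 4\cos^2\varphi\cos\delta\sin\delta\,s^2$, possibly after using the symmetry under $\Delta\mapsto-\Delta$. Squaring and combining the factors should give \eqref{eq:lemmaDistl1l2Plus}: the stray factor $s^4/s^2$ cancels against the $4s^2$ in the denominator, leaving the $4\sin^2\frac\Delta2$ in the numerator. I will handle the degenerate case $\Delta=0$ separately, since there the lines coincide and both sides vanish.

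For the opposite-latitude case the same scheme applies, but the cleanest normalization is with $t=\tan\frac\Delta2$. I expect $T^-$ to be proportional to $\cos\varphi\cos\delta\cdot(\sin\varphi\sin\delta\cos\frac\Delta2+\cos\delta\sin\frac\Delta2)$ times a factor of $\cos\frac\Delta2$. Likewise I expect $1-(c^-)^2$ to become, after dividing by $\cos^2\delta\cos^2\frac\Delta2$, the sum $\cos^2\varphi(1+\tan^2\delta)+(\sin\varphi\tan\delta+t)^2$, up to the common factors that cancel. A useful check is $\varphi\to0$: there both tangent points are on the equator and the formula must reduce to the distance between two tilted equatorial tangent lines.

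The main obstacle is this algebraic factorization, especially recognizing $1-(c^-)^2$ as the stated sum. My first attempt will be a change of frame: rotate about the $z$-axis by $-\Delta/2$ so the two points sit symmetrically at longitudes $\mp\Delta/2$. Then the components have parity under $y\mapsto-y$ and the expansions halve. I will also treat separately the cases where the formula in Lemma~\ref{lem:linedist} switches to the parallel branch; these are $\Delta=0$ for the first formula, and possibly special $\Delta$ for the second, where continuity of both sides justifies the identity. Finally, at $\Delta=\pi$ we have $\overrightarrow{p_2}^-=-\overrightarrow{p_1}$, so both lines lie in parallel tangent planes at antipodal points. Their distance is therefore $2$, which is consistent with the limit $t\to\infty$ of \eqref{eq:lemmaDistl1l2Minus}.
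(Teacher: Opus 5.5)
Your overall route is sound: apply Lemma~\ref{lem:linedist} and compute $\|\overrightarrow{v_1}\times\overrightarrow{v_2}\|^2$ as $1-(\overrightarrow{v_1}\cdot\overrightarrow{v_2})^2$. Your same-latitude computation is correct, and your antipodal-tangent-planes argument at $\Delta=\pi$ is cleaner than evaluating the formula there. The cross case, however, has two genuine problems.

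First, your predicted form of $T^-$ is wrong, and so is the cancellation you are counting on. Put
$A=\sin\varphi\cos\delta\cos\frac\Delta2-\sin\delta\sin\frac\Delta2$,
$B=\sin\delta\sin\varphi\cos\frac\Delta2+\cos\delta\sin\frac\Delta2$ and
$C=\sin\varphi\cos\delta\sin\frac\Delta2+\sin\delta\cos\frac\Delta2$. A direct expansion gives
\[
1-c^-=2A^2,\qquad 1+c^-=2\bigl(\cos^2\delta\cos^2\varphi+C^2\bigr)=2\bigl(1-A^2\bigr),\qquad T^-=4AB .
\]
So the cofactor of $B$ in $T^-$ is $4A$, not a multiple of $\cos\varphi\cos\delta\cos\frac\Delta2$. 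Indeed $T^-$ must vanish whenever the directions are parallel, and $\cos\varphi\cos\delta\cos\frac\Delta2$ does not. The factor $A^2$ is what cancels between $(T^-)^2$ and $1-(c^-)^2$, leaving $\dist^2=4B^2/(\cos^2\delta\cos^2\varphi+C^2)$. The factor $\cos\frac\Delta2$ appears only when you then divide by $\cos^2\delta\cos^2\frac\Delta2$. The numerator becomes $4(\sin\varphi\tan\delta+t)^2$ and the denominator becomes $\cos^2\varphi(1+t^2)+(\tan\delta+t\sin\varphi)^2$. This equals the stated denominator, since both are $\cos^2\varphi+\tan^2\delta+t^2+2t\sin\varphi\tan\delta$.

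Second, the parallel case is not merely ``possibly'' present, and your continuity argument does not handle it. The equation $A=0$ has a solution for every $\varphi,\delta$, namely $\tan\frac\Delta2=\sin\varphi\cot\delta$, with $\Delta\in(0,\pi)$. The distance between lines is not continuous at parallel configurations; it is only upper semicontinuous, as in Lemma~\ref{lem:usc}. For example, two parallel lines at distance $d>0$ can be perturbed into intersecting lines. Continuity of the right-hand side therefore gives only that $\dist$ at that $\Delta$ is at least the formula's value.

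You need a direct computation in the parallel branch. At $A=0$ you have $c^-=1$, so $\overrightarrow{v_1}=\overrightarrow{v_2}^-$. Also $(\overrightarrow{p_2}^--\overrightarrow{p_1})\cdot\overrightarrow{v_1}=-2\cos\varphi\cos\frac\Delta2\,A=0$. Hence the distance is $\|\overrightarrow{p_2}^--\overrightarrow{p_1}\|$, whose square is $4(A^2+B^2)=4B^2$. This matches the formula because $\cos^2\delta\cos^2\varphi+C^2=1-A^2=1$. Equivalently, combine upper semicontinuity with the trivial bound $\dist\le\|\overrightarrow{p_2}^--\overrightarrow{p_1}\|$. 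Since this $\Delta$ lies in $(0,\pi)$, the $\tan\frac\Delta2$ form of the statement applies there as well.
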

\begin{proof}
If $\overrightarrow{v_1}$ and $\overrightarrow{v_2}^{\pm}$ are not parallel, then
\begin{align}
\dist(\ell^+(0),\ell^{\pm}(\Delta))&=\frac{\left|(\overrightarrow{p_2}^{\pm}-\overrightarrow{p_1})\cdot \left(\overrightarrow{v_1}\times\overrightarrow{v_2}^{\pm}\right)\right|}
{\left\|\overrightarrow{v_1}\times\overrightarrow{v_2}^{\pm}\right\|}. \label{eqn:distanceFormula}
\end{align}
Write $\overrightarrow{v_1}\times\overrightarrow{v_2}^{\pm}=\langle N_x^{\pm},N_y^{\pm},N_z^{\pm}\rangle$.
From \eqref{eqn:vectorV2P}--\eqref{eqn:vectorV2M},
\begin{align*}
N_x^{\pm}&=\left(\sin\delta(1-\cos\Delta)\pm\cos\delta\sin\varphi\sin\Delta\right)\cos\delta\cos\varphi,\\
N_y^{\pm}&=\left(\cos\delta\sin\varphi(1\mp \cos\Delta)-\sin\delta\sin\Delta \right)\cos\delta\cos\varphi,\\
N_z^{\pm}&=(-1\pm1)\sin\delta\cos\delta\sin\varphi\cos\Delta+\sin\Delta\left(\sin^2\delta\pm\cos^2\delta\sin^2\varphi\right).
\end{align*}

\emph{The line $\ell^+(\Delta)$.} Here $\overrightarrow{p_2}^+-\overrightarrow{p_1}=\cos\varphi\langle\cos\Delta-1,\sin\Delta,0\rangle$, and expanding,
\begin{align*}
\left(\overrightarrow{p_2}^+-\overrightarrow{p_1}\right)\cdot\left(\overrightarrow{v_1}\times\overrightarrow{v_2}^+\right)&=
-2\cos^2\varphi\sin\delta\cos\delta(1-\cos\Delta),\\
\left\|\overrightarrow{v_1}\times\overrightarrow{v_2}^+\right\|^2&=
\left(\sin^2\delta+\cos^2\delta\sin^2\varphi\right)\\
&\quad\cdot\left(2(1-\cos\Delta)\cos^2\delta\cos^2\varphi+\sin^2\Delta\left(\sin^2\delta+\cos^2\delta\sin^2\varphi\right)\right).
\end{align*}
For $\Delta\in(0,2\pi)$ the norm is positive, since its second factor is at least
$2(1-\cos\Delta)\cos^2\delta\cos^2\varphi>0$, so \eqref{eqn:distanceFormula} applies. For
$\Delta=0$ the lines coincide and both sides of \eqref{eq:lemmaDistl1l2Plus} vanish. Substituting,
dividing numerator and denominator by $\cos^2\delta$, and using
$1-\cos^2\Delta=(1-\cos\Delta)(1+\cos\Delta)$, $1-\cos\Delta=2\sin^2\frac\Delta2$ and
$\cos^2\varphi+\tan^2\delta+\sin^2\varphi=1+\tan^2\delta$ gives \eqref{eq:lemmaDistl1l2Plus}. The
bracket in its denominator is at least
$1+\tan^2\delta-(\tan^2\delta+\sin^2\varphi)=\cos^2\varphi>0$.

\emph{The line $\ell^-(\Delta)$.} Here
$\overrightarrow{p_2}^--\overrightarrow{p_1}=\langle\cos\varphi(\cos\Delta-1),\cos\varphi\sin\Delta,-2\sin\varphi\rangle$.
With the abbreviations
\begin{align}
A&=\sin\varphi\cos\delta\cos\tfrac{\Delta}{2}-\sin\delta\sin\tfrac{\Delta}{2},\nonumber\\
B&=\sin\delta\sin\varphi\cos\tfrac{\Delta}{2}+\cos\delta\sin\tfrac{\Delta}{2},\label{eqn:defA}\\
C&=\sin\varphi\cos\delta\sin\tfrac{\Delta}{2}+\sin\delta\cos\tfrac{\Delta}{2},\nonumber
\end{align}
a direct expansion, using the half-angle formulas, gives
\begin{align}\label{eqn:NormMinusFinal}
\left(\overrightarrow{p_2}^--\overrightarrow{p_1}\right)\cdot\left(\overrightarrow{v_1}\times\overrightarrow{v_2}^-\right)=4AB,\quad
N_z^-=-2AC,\quad (N_x^-)^2+(N_y^-)^2=4\cos^2\delta\cos^2\varphi\,A^2,
\end{align}
so $\|\overrightarrow{v_1}\times\overrightarrow{v_2}^-\|^2=4A^2\left[\cos^2\delta\cos^2\varphi+C^2\right]$.
The mixed terms in $A^2+C^2$ cancel, whence
\begin{align}\label{eqn:identityAC}
A^2+C^2=\sin^2\varphi\cos^2\delta+\sin^2\delta,\quad\text{i.e.}\quad \cos^2\delta\cos^2\varphi+A^2+C^2=1.
\end{align}
In particular $\cos^2\delta\cos^2\varphi+C^2\ge\cos^2\delta\cos^2\varphi>0$, and the two direction
vectors are parallel if and only if $A=0$.

If $A\ne0$, then \eqref{eqn:distanceFormula} and \eqref{eqn:NormMinusFinal} give
\begin{align}\label{eqn:distMinusMaster}
\dist^2(\ell^+(0),\ell^-(\Delta))=\frac{4B^2}{\cos^2\delta\cos^2\varphi+C^2}.
\end{align}
For $\Delta=\pi$ (where $A=-\sin\delta\ne0$) we have $B=\cos\delta$, $C=\sin\varphi\cos\delta$, and
\eqref{eqn:distMinusMaster} equals $4$, proving the last claim. For $\Delta\ne\pi$, dividing
numerator and denominator of \eqref{eqn:distMinusMaster} by $\cos^2\delta\cos^2\frac\Delta2>0$ and
using $\tan^2\delta=\cos^2\varphi\tan^2\delta+\sin^2\varphi\tan^2\delta$ turns the denominator into
$\cos^2\varphi(1+\tan^2\delta)+(\sin\varphi\tan\delta+\tan\frac\Delta2)^2$, which is
\eqref{eq:lemmaDistl1l2Minus}.

If $A=0$, the parallel unit vectors $\overrightarrow{v_1},\overrightarrow{v_2}^-$ have the same
positive third coordinate $\cos\delta\cos\varphi$, so they are equal. Moreover
$(\overrightarrow{p_2}^--\overrightarrow{p_1})\cdot\overrightarrow{v_1}=-2\cos\varphi\cos\frac\Delta2\,A=0$,
so the distance between the two parallel lines is
$\|\overrightarrow{p_2}^--\overrightarrow{p_1}\|$, whose square
$2\cos^2\varphi(1-\cos\Delta)+4\sin^2\varphi=4(A^2+B^2)=4B^2$ agrees with
\eqref{eqn:distMinusMaster}, since \eqref{eqn:identityAC} with $A=0$ gives
$\cos^2\delta\cos^2\varphi+C^2=1$. Finally $A=0$ forces
$\tan\frac\Delta2=\sin\varphi\cot\delta>0$, so $\Delta\ne\pi$ and the reduction to
\eqref{eq:lemmaDistl1l2Minus} applies verbatim.
\end{proof}

\subsection{The formulas in $S$ and $T$}
Throughout the pinwheel construction we abbreviate
\[
S=\sin\varphi\in(0,1),\quad T=\tan\delta>0,\quad c_0=(1-S^2)(1+T^2).
\]
By Lemma~\ref{lem:distances}, every distance in the pinwheel is an algebraic function of $S$ and
$T$. In Section~\ref{sec:proofB} we choose $S$ and $T$ rational, so that the inequalities we need
become comparisons of rational numbers.

For $\Delta\in\R$ let $\Dsame(\Delta)=\dist(\ell^+(0),\ell^+(\Delta))$ and
$\Dcross(\Delta)=\dist(\ell^+(0),\ell^-(\Delta))$. With
$\cos^4\varphi=(1-S^2)^2$, \eqref{eq:lemmaDistl1l2Plus} reads
\begin{align}\label{eq:Dsame}
\Dsame(\Delta)^2
=\frac{4\sin^2\frac\Delta2\,(1-S^2)^2T^2}
{(S^2+T^2)\bigl(1+T^2-\sin^2\frac\Delta2\,(S^2+T^2)\bigr)},
\end{align}
and the bracket in the denominator is at least $1-S^2>0$, as shown in the proof of
Lemma~\ref{lem:distances}. For $\Delta\ne\pi$ write $w=\tan\frac\Delta2+ST$. The identity
$\cos^2\varphi(1+\tan^2\delta)=c_0$ turns \eqref{eq:lemmaDistl1l2Minus} into
\begin{align}\label{eq:Dcross}
\Dcross(\Delta)=\frac{2|w|}{\sqrt{w^2+c_0}},\quad \Delta\ne\pi.
\end{align}
By Lemma~\ref{lem:distances}, $\Dcross(\pi)=2$. This is also the limit of \eqref{eq:Dcross} as
$|w|\to\infty$. Both formulas are
$2\pi$-periodic in $\Delta$. For \eqref{eq:Dcross} we take $\Delta\in(-\pi,\pi]$, on which
$\Delta\mapsto\tan\frac\Delta2$ is an increasing bijection of $(-\pi,\pi)$ onto $\R$.

The two formulas cover all pairs of pinwheel lines. A rotation about the $z$-axis carries two up
lines with longitudes $\lambda_1,\lambda_2$ to the pair $(\ell^+(0),\ell^+(\Delta))$ with
$\Delta=\lambda_2-\lambda_1$, and carries an up line and a down line to the pair $(\ell^+(0),\ell^-(\Delta))$ with
$\Delta$ equal to the longitude of the down line minus that of the up line. The reflection
$(x,y,z)\mapsto(x,y,-z)$ carries two down lines to two lines of up type with the same longitudes
and tilt $-\delta$, and \eqref{eq:Dsame} depends on $\delta$ only through $T^2$. Hence two lines
of the same group with longitudes $\lambda_1,\lambda_2$ are at distance
$\Dsame(\lambda_2-\lambda_1)$, and an up line and a down line whose longitudes differ by $\Delta$
are at distance $\Dcross(\Delta)$.

\begin{lemma}\label{lem:mono}
As a function of $x=\sin^2\frac\Delta2\in(0,1]$, the right-hand side of \eqref{eq:Dsame} is
strictly increasing. Consequently, within either group of $P(N;\varphi,\delta,\psi)$ the
minimum distance is attained by adjacent lines and equals $\Dsame(\gamma)$.
\end{lemma}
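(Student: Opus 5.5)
Write $a=(1-S^2)^2T^2>0$, $b=S^2+T^2>0$ and $c=1+T^2$. By \eqref{eq:Dsame} the right-hand side becomes, as a function of $x=\sin^2\frac\Delta2$,
\[
f(x)=\frac{4a\,x}{b\,(c-b x)}.
\]
The plan is to show that $f$ is strictly increasing on $(0,1]$ directly. For this we use that $c-bx\ge c-b=1-S^2>0$ for $x\in(0,1]$, as already noted after \eqref{eq:Dsame}. On $(0,1]$ the numerator $4ax$ is positive and strictly increasing, and the factor $c-bx$ is positive and strictly decreasing. Hence $f$ is strictly increasing.

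One can also compute $f'(x)=\frac{4ac}{b(c-bx)^2}>0$, which gives the same conclusion. The positivity of $c-bx$ on the whole interval is the only point that needs care, because it rules out a pole inside $(0,1]$. This is the step I would state explicitly; the rest of the argument is routine.

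For the consequence, consider two lines of the same group. By the discussion preceding the lemma, their distance is $\Dsame(\lambda_2-\lambda_1)$ with $\lambda_2-\lambda_1=m\gamma$ for some $m\in\{1,\dots,k-1\}$. (For the down group, the common offset $\psi$ cancels in the difference of longitudes.) The quantity $x=\sin^2\frac{m\gamma}{2}=\sin^2\frac{m\pi}{k}$ lies in $(0,1]$, and it is symmetric under $m\mapsto k-m$.

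On $1\le m\le k/2$ the angle $\frac{m\pi}{k}$ lies in $(0,\frac\pi2]$, where $\sin^2$ is increasing. So the smallest value of $x$ is $\sin^2\frac\pi k$, attained at $m=1$ and $m=k-1$, that is, by adjacent lines. Since $\Dsame\ge0$ and $f$ is strictly increasing in $x$, the minimum of $\Dsame^2$, and hence of $\Dsame$, over the group is $\Dsame(\gamma)$. Here $k\ge3$, so $\gamma\in(0,\pi)$ and this value is positive.
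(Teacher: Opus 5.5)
Your proof is correct and follows essentially the same route as the paper. You write the right-hand side of \eqref{eq:Dsame} as a linear-over-linear function of $x$ whose denominator stays bounded below by a positive multiple of $1-S^2$ on $(0,1]$, and you then minimize $\sin^2\frac{j\gamma}{2}$ over $1\le j\le k-1$, with the minimum at $j\in\{1,k-1\}$.
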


\begin{proof}
The right-hand side is $\frac{Ax}{B-Cx}$ with $A=4(1-S^2)^2T^2>0$, $B=(S^2+T^2)(1+T^2)$,
$C=(S^2+T^2)^2$, and $B-Cx\ge(S^2+T^2)(1-S^2)>0$ on $[0,1]$ by the bound recorded after \eqref{eq:Dsame}. Such a
function is strictly increasing. Within a group the longitude differences are
$\Delta=j\gamma$, $1\le j\le k-1$, and $\tfrac{j\gamma}2\in[\tfrac\gamma2,\pi-\tfrac\gamma2]$,
so $\sin^2\frac{j\gamma}2\ge\sin^2\frac\gamma2$, with equality for $j\in\{1,k-1\}$.
\end{proof}

\subsection{Criterion in $S$ and $T$}\label{subsection:arc}

By Definition~\ref{def:pinwheel}, the longitude of $\ell_i^-$ minus the longitude of $\ell_j^+$ is
\[
\Delta_{ij}=\psi+(i-j)\gamma\pmod{2\pi},
\]
so the cross-group longitude differences form the set
\begin{align}\label{eq:lattice}
\Bigl\{\psi+j\gamma \bmod 2\pi:\
j=0,1,\dots,k-1\Bigr\},
\end{align}
an arithmetic progression of step $\gamma$ on the circle whose offset $\psi$ we are free to
choose.

\begin{lemma}\label{lem:arc}
Let $\tau^*=2\sin\theta$ with $\theta\in(0,\frac\pi2)$, and set
\begin{align}\label{eq:ustar}
u^*=\sqrt{c_0}\,\frac{\tau^*}{\sqrt{4-\tau^{*2}}}=\sqrt{c_0}\,\tan\theta .
\end{align}
Let
\[
J=\Bigl(2\arctan(-ST-u^*),\ 2\arctan(-ST+u^*)\Bigr).
\]
Then for $\Delta\in(-\pi,\pi]$ we have $\Dcross(\Delta)<\tau^*$ if and only if $\Delta\in J$. The
open interval $J$ has length
\[
|J|=2\bigl[\arctan(ST+u^*)-\arctan(ST-u^*)\bigr].
\]
Let
\begin{align}\label{eq:kappa}
\psi^*=\tfrac\gamma2-\arctan(ST-u^*)-\arctan(ST+u^*).
\end{align}
If $|J|\le\gamma$ and $\psi=\psi^*$, then the lattice \eqref{eq:lattice} is disjoint from $J$.
Hence every cross-group distance in $P(N;\varphi,\delta,\psi^*)$ is at least $\tau^*$.
\end{lemma}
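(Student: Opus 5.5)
The plan is to work entirely with the closed form \eqref{eq:Dcross} and the substitution $w=\tan\frac\Delta2+ST$. On $(-\pi,\pi)$ the map $\Delta\mapsto w$ is an increasing bijection onto $\R$. Since $\Dcross(\Delta)=2|w|/\sqrt{w^2+c_0}$ and both sides are nonnegative, squaring shows that $\Dcross(\Delta)<\tau^*$ is equivalent to $4w^2<\tau^{*2}(w^2+c_0)$. Because $\tau^*<2$, this is the same as
\[
w^2<\frac{c_0\tau^{*2}}{4-\tau^{*2}}=u^{*2}.
\]
With $\tau^*=2\sin\theta$ we have $4-\tau^{*2}=4\cos^2\theta$, so $u^*=\sqrt{c_0}\tan\theta$ as in \eqref{eq:ustar}. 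Hence $\Dcross(\Delta)<\tau^*$ if and only if $-u^*<\tan\frac\Delta2+ST<u^*$. Applying the increasing map $2\arctan$ gives exactly $\Delta\in J$.

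The remaining endpoint is $\Delta=\pi$. There $\Dcross(\pi)=2>\tau^*$ by Lemma~\ref{lem:distances}. Also $\pi\notin J$, because $J$ is an open subinterval of $(-\pi,\pi)$. So the equivalence holds on all of $(-\pi,\pi]$. The length formula follows by subtracting the endpoints and using that $\arctan$ is odd, which gives $|J|=2[\arctan(ST+u^*)-\arctan(ST-u^*)]$.

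For the lattice statement, compute the midpoint of $J$. It is
\[
m=\arctan(ST-u^*)\cdot(-1)+\dots=-\arctan(ST+u^*)-\arctan(ST-u^*),
\]
so $\psi^*=m+\frac\gamma2$ by \eqref{eq:kappa}. The lattice points $\psi^*+j\gamma$, taken modulo $2\pi$, therefore lie at $m+\frac\gamma2+j\gamma$. Because $k\gamma=2\pi$, this set is invariant under translation by $\gamma$ and is a full coset of $\gamma\Z$ on the circle.

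The point of this coset closest to $m$, measured in circular distance, is at distance exactly $\frac\gamma2$. Every point of $J$ lies within distance $\frac{|J|}2\le\frac\gamma2$ of $m$, and strictly less because $J$ is open. Hence no lattice point lies in $J$.

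I would state this carefully on the circle: reduce each lattice point to its representative in $(-\pi,\pi]$ and check that it is not in $J$. This works because $J\subset(-\pi,\pi)$ has length less than $2\pi$, so membership in $J$ of a representative in $(-\pi,\pi]$ is the same as membership of the corresponding point of the circle in the image of $J$. By the first part, every cross-group difference $\Delta_{ij}$, reduced to $(-\pi,\pi]$, then gives a distance of at least $\tau^*$.

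The main obstacle is only bookkeeping: getting the midpoint sign right, and making sure the circle arc of $J$ does not wrap around. The latter holds because $J\subset(-\pi,\pi)$ already.
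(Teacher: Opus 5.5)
Your proposal is correct and follows essentially the same route as the paper. You reduce $\Dcross(\Delta)<\tau^*$ to $|w|<u^*$ by squaring where the paper uses monotonicity of $t\mapsto 2t/\sqrt{t^2+c_0}$, pull back through $\Delta\mapsto\tan\frac\Delta2$, treat $\Delta=\pi$ separately, and place the lattice at circular distance at least $\frac\gamma2$ from the midpoint of $J$; your remark that $k\gamma=2\pi$ makes the lattice a full coset of $\gamma\Z$ spells out what the paper uses implicitly, and your sloppily written midpoint line still ends at the correct value $\psi^*-\frac\gamma2$.
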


\begin{proof}
The function $t\mapsto\frac{2t}{\sqrt{t^2+c_0}}$ is strictly increasing on $[0,\infty)$, and
$u^*$ of \eqref{eq:ustar} is the value at which it equals $\tau^*$. Hence
$\Dcross(\Delta)<\tau^*$ if and only if $|w|<u^*$. Now $|w|<u^*$ means $\tan\frac\Delta2\in(-ST-u^*,\,-ST+u^*)$, and as
$\Delta\mapsto\tan\frac\Delta2$ is an increasing bijection from $(-\pi,\pi)$ onto $\R$ (and
$\Dcross(\pi)=2\ge\tau^*$), the failure set is exactly the interval $J$. By the oddness of
$\arctan$,
\[
|J|=2\bigl[\arctan(u^*-ST)+\arctan(u^*+ST)\bigr]
=2\bigl[\arctan(ST+u^*)-\arctan(ST-u^*)\bigr].
\]
The interval $J$ is centered at
$c=\arctan(-ST-u^*)+\arctan(-ST+u^*)=\psi^*-\frac\gamma2$. With $\psi=\psi^*$ the lattice
\eqref{eq:lattice} is $\{c+\frac\gamma2+j\gamma\}$, whose distance (on the circle) from the
center $c$ is at least $\frac\gamma2\ge\frac{|J|}2$. Since $J$ is open, no lattice point lies
in $J$.
\end{proof}

Fix an even $N\ge6$, parameters $S\in(0,1)$, $T>0$, and a target $\tau^*=2\sin\theta$ with
$\theta\in(0,\frac\pi2)$. By Lemma~\ref{lem:mono}, all same-group distances are at least
$\tau^*$ if and only if $\Dsame(\frac{4\pi}N)^2\ge\tau^{*2}$. Multiplied out, this
\emph{same-group inequality} reads
\begin{align}
\sin^2\tfrac{2\pi}N\,(1-S^2)^2T^2
&\ge \sin^2\theta\Bigl[(S^2+T^2)(1+T^2)-\sin^2\tfrac{2\pi}N\,(S^2+T^2)^2\Bigr].
\tag{same-group}\label{eq:same}
\end{align}
Lemma~\ref{lem:arc} shows that all cross-group distances are at least
$\tau^*$ once the offset is chosen by \eqref{eq:kappa}, provided the interval $J$ on which
$\Dcross<\tau^*$ is no longer than $\frac{4\pi}N$. This is the \emph{cross-group inequality}
\begin{align}
\arctan\bigl(ST+\sqrt{c_0}\tan\theta\bigr)-\arctan\bigl(ST-\sqrt{c_0}\tan\theta\bigr)
&\le \frac{2\pi}N .
\tag{cross-group}\label{eq:cross}
\end{align}

\begin{theorem}\label{thm:criterion}
Let $N=2k\ge6$ be even, $S\in(0,1)$, $T>0$, and $\tau^*=2\sin\theta$ with
$\theta\in(0,\frac\pi2)$. If \eqref{eq:same} and \eqref{eq:cross} hold, then the pinwheel
$P(N;\varphi,\delta,\psi^*)$ with $\sin\varphi=S$, $\tan\delta=T$ and $\psi^*$ as in
\eqref{eq:kappa} consists of $N$ lines tangent to $\Sph^2$ with all pairwise distances at least
$\tau^*$. Consequently, by Lemma~\ref{lem:dictionary}, at least $N$ pairwise non-overlapping
cylinders of radius $\frac{\sin\theta}{1-\sin\theta}$ touch the unit ball.
\end{theorem}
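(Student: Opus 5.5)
The plan is to assemble the theorem from Lemma~\ref{lem:mono}, Lemma~\ref{lem:arc} and Lemma~\ref{lem:dictionary}. The pairs of lines in $P(N;\varphi,\delta,\psi^*)$ fall into two types: same-group pairs and up/down pairs. The discussion after \eqref{eq:Dcross} shows that two lines of the same group with longitude difference $\Delta$ are at distance $\Dsame(\Delta)$. An up line and a down line whose longitudes differ by $\Delta$ are at distance $\Dcross(\Delta)$. So it suffices to bound each of these two functions from below by $\tau^*$ at the relevant values of $\Delta$.

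\emph{Same-group pairs.} By Lemma~\ref{lem:mono}, the minimum distance within a group is $\Dsame(\gamma)$ with $\gamma=\frac{4\pi}{N}$, so it is enough to show $\Dsame(\gamma)^2\ge\tau^{*2}=4\sin^2\theta$. Substitute \eqref{eq:Dsame} with $\sin^2\frac\gamma2=\sin^2\frac{2\pi}N$. The denominator
\[
(S^2+T^2)\bigl(1+T^2-\sin^2\tfrac{2\pi}N(S^2+T^2)\bigr)
\]
is positive by the bound recorded after \eqref{eq:Dsame}. Clearing it and dividing by $4$ turns the required inequality into exactly \eqref{eq:same}.

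\emph{Cross-group pairs.} Set $u^*=\sqrt{c_0}\tan\theta$ as in \eqref{eq:ustar}. Then the left side of \eqref{eq:cross} equals $\frac{|J|}{2}$, so \eqref{eq:cross} says precisely that $|J|\le\gamma$. Lemma~\ref{lem:arc} then applies with $\psi=\psi^*$. The lattice \eqref{eq:lattice}, reduced to $(-\pi,\pi]$, misses $J$, so $\Dcross(\Delta_{ij})\ge\tau^*$ for every up/down pair. The lemma's characterization uses only $\Dcross\ge\tau^*$ off $J$, including the value $\Dcross(\pi)=2$. Since $\Dcross$ is $2\pi$-periodic, reducing modulo $2\pi$ is harmless.

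\emph{Conclusion.} Combining the two cases, all pairwise distances are at least $\tau^*>0$. This also makes the $N$ lines distinct: two same-group lines have different tangent points, and the positive distance rules out any coincidence between an up line and a down line. By Lemma~\ref{lem:dictionary}, with $\omega=\frac{\tau^*}{2-\tau^*}=\frac{\sin\theta}{1-\sin\theta}$ from \eqref{eq:tauDef}, there are $N$ non-overlapping cylinders of radius $\omega$ touching $B$.

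The only step needing care is bookkeeping rather than a real obstacle. One must check that the cross-group differences lie in the domain of Lemma~\ref{lem:arc} after reduction, and that the identification $|J|/2=\text{LHS of \eqref{eq:cross}}$ uses the same $u^*$.
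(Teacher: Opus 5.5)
Your proposal is correct and follows the paper's proof essentially step for step. Same-group pairs are handled by Lemma~\ref{lem:mono} together with clearing the positive denominator of \eqref{eq:Dsame} to obtain \eqref{eq:same}; cross-group pairs by reading \eqref{eq:cross} as $|J|\le\gamma$ and invoking Lemma~\ref{lem:arc} with $\psi=\psi^*$; and the cylinder statement follows from Lemma~\ref{lem:dictionary} with $\omega=\frac{\sin\theta}{1-\sin\theta}$. Your remarks on periodicity and on the distinctness of the $N$ lines (which follows from all distances being at least $\tau^*>0$) spell out points the paper leaves implicit.
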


\begin{proof}
Same-group pairs: by Lemma~\ref{lem:mono} the minimum is $\Dsame(\gamma)$ with
$\gamma=\frac{4\pi}N$, and \eqref{eq:same} is exactly $\Dsame(\gamma)^2\ge\tau^{*2}$ after
multiplying out the (positive) denominator of \eqref{eq:Dsame} at
$\sin^2\frac\gamma2=\sin^2\frac{2\pi}N$. Cross-group pairs: \eqref{eq:cross} is $|J|\le\gamma$,
so Lemma~\ref{lem:arc} applies. The cylinder statement follows since
$\tau^*=2\sin\theta$ corresponds under \eqref{eq:tauDef} to
$\omega=\frac{\sin\theta}{1-\sin\theta}$.
\end{proof}

\begin{remark}\label{rem:exact}
Taking $\theta=\frac\pi{N-1}$, the cylinders in Theorem~\ref{thm:criterion} have radius exactly
$\omega_{N-1}$, and $N=(N-1)+1$ of them touch $B$. Taking $\theta=\frac\pi{N-2}$ gives $N=(N-2)+2$
cylinders of radius exactly $\omega_{N-2}$. These are the two cases used in Section~\ref{sec:proofB}.
\end{remark}

\section{The pinwheel for large $n$: proof of Theorem~\ref{thm:B}}\label{sec:proofB}

\subsection{The frozen parameters}\label{subsec:frozen}
From now on
$S=\frac9{20}$, $T=\frac{31}{64}$,
so that all derived constants are rational:
\begin{align*}
&ST=\frac{279}{1280},\quad
S^2T^2=\frac{77841}{1638400},\quad
c_0=(1-S^2)(1+T^2)=\frac{1613183}{1638400}.
\end{align*}
We will need
\begin{align}\label{eq:mu2120}
&2\Phi>\tfrac{21}{20}\quad\text{and}\quad \tfrac{21}{20}\sqrt{c_0}<1+S^2T^2, \\
&\text{where }\Phi\text{ is defined by }\Phi^2=\frac{(1-S^2)^2T^2}{(S^2+T^2)(1+T^2)}=\frac{1564677136}{5658909425}.\nonumber
\end{align} 
After squaring, both inequalities in \eqref{eq:mu2120} are comparisons of rational numbers,
checked by the verifier (Table~\ref{tab:checks}).

\subsection{Elementary estimates}
Appendix~\ref{app:decimal} collects, with one-line proofs, the only analytic inequalities we
use:
\begin{enumerate}[label=(E\arabic*),leftmargin=3em]
\item\label{E:sin} $x-\frac{x^3}6\le\sin x\le x$ for $x\ge0$;
\item\label{E:tan} $\tan y\le y+y^3$ for $y\in(0,1]$;
\item\label{E:atan} $\arctan p-\arctan q=\arctan\frac{p-q}{1+pq}$ whenever $pq>-1$, and
$0\le\arctan z\le z$ for $z\ge0$;
\item\label{E:mono} $z\mapsto\dfrac{z}{1+S^2T^2-z^2}$ is strictly increasing on
$[0,\sqrt{1+S^2T^2})$.
\end{enumerate}

\subsection{The two tail lemmas}\label{subsection:twoTailLemmas}
Fix $s\in\{1,2\}$ and put $\theta=\theta(N)=\frac\pi{N-s}$ in
\eqref{eq:same}--\eqref{eq:cross}.

\begin{lemma}\label{lem:tailstar2}
Let $N\ge2s+4$ be even. If
\begin{align}\label{eq:tailstar2}
2\Bigl(1-\frac sN\Bigr)\Bigl(1-\frac{2\pi^2}{3N^2}\Bigr)\;\ge\;\frac1\Phi,
\end{align}
then \eqref{eq:same} holds with $\theta=\frac\pi{N-s}$. The left-hand side of
\eqref{eq:tailstar2} is strictly increasing in $N$, and \eqref{eq:tailstar2} holds for $s=1$,
$N=32$ and for $s=2$, $N=48$. Hence \eqref{eq:same} holds for all even $N\ge32$
(respectively $N\ge48$).
\end{lemma}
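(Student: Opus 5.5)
Write $a=\sin^2\frac{2\pi}N$, $b=\sin^2\theta$ with $\theta=\frac\pi{N-s}$, and $Q=(S^2+T^2)(1+T^2)$. Then \eqref{eq:same} reads $a(1-S^2)^2T^2\ge b\,[Q-a(S^2+T^2)^2]$. Since the subtracted term only helps, it suffices to prove $a(1-S^2)^2T^2\ge bQ$. Dividing by $Q$ and using the definition of $\Phi$ in \eqref{eq:mu2120}, this becomes $a\Phi^2\ge b$, that is,
\[
\sin\tfrac{2\pi}N\,\Phi\ \ge\ \sin\tfrac\pi{N-s}.
\]
Both sines are positive because $N\ge2s+4\ge6$.

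Next bound each side with \ref{E:sin}. For the left side, $\sin\frac{2\pi}N\ge\frac{2\pi}N\bigl(1-\frac{(2\pi/N)^2}{6}\bigr)=\frac{2\pi}N\bigl(1-\frac{2\pi^2}{3N^2}\bigr)$, and this factor is positive since $N\ge6$. For the right side, $\sin\frac\pi{N-s}\le\frac\pi{N-s}$. It is then enough to have $\frac{2\pi}N\bigl(1-\frac{2\pi^2}{3N^2}\bigr)\Phi\ge\frac\pi{N-s}$. Multiplying by $\frac{N-s}{\pi\Phi}>0$ turns this into exactly \eqref{eq:tailstar2}, which proves the first claim.

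Monotonicity is immediate. Both factors $1-\frac sN$ and $1-\frac{2\pi^2}{3N^2}$ are positive and strictly increasing in $N$ for $N\ge6$, so their product is strictly increasing.

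The remaining task is to check \eqref{eq:tailstar2} at $N=32$ for $s=1$ and at $N=48$ for $s=2$. I expect this numerical step to be the main obstacle, because the claim involves $\pi$ and $\Phi$, and to keep the proof checkable I would reduce it to rational comparisons. Use $\pi^2<\frac{987}{100}$, which follows from $\pi<3.1417$. Use $\frac1\Phi<\frac{20}{21}$, which is a consequence of $2\Phi>\frac{21}{20}$ from \eqref{eq:mu2120}. With these, it suffices to check $2\cdot\frac{31}{32}\bigl(1-\frac{2\cdot 9.87}{3\cdot1024}\bigr)\ge\frac{20}{21}$ and the analogous inequality at $N=48$.

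Both checks have a huge margin: the left-hand sides are about $1.92$, while $\frac{20}{21}\approx 0.95$. So any crude rational bound suffices, and the verification can be recorded as a rational comparison in Table~\ref{tab:checks}.

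Finally, monotonicity gives \eqref{eq:tailstar2}, and hence \eqref{eq:same}, for all even $N\ge32$ when $s=1$ and for all even $N\ge48$ when $s=2$.
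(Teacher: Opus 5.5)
Your reduction to $\sin\frac{2\pi}N\,\Phi\ge\sin\frac\pi{N-s}$, your two applications of \ref{E:sin}, and your monotonicity argument coincide with the paper's proof. The threshold check, however, rests on a false inequality. From $2\Phi>\frac{21}{20}$ you get $\Phi>\frac{21}{40}$, hence $\frac1\Phi<\frac{40}{21}\approx1.905$, not $\frac1\Phi<\frac{20}{21}$. The latter is impossible: $\Phi^2=\frac{1564677136}{5658909425}<1$ forces $\frac1\Phi>1$. In fact $\Phi\approx0.526$ and $\frac1\Phi\approx1.902$.

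So the ``huge margin'' is illusory. The true comparison is roughly $1.925$ against $1.902$ at $(s,N)=(1,32)$, and $1.911$ against $1.902$ at $(s,N)=(2,48)$. Your claim that any crude rational bound suffices is therefore wrong; a bound like $\Phi>\frac12$ makes both checks fail. A smaller slip of the same kind: $\pi<3.1417$ does not give $\pi^2<\frac{987}{100}$, because $3.1417^2>9.87$. Use $\pi<3.1416$ or $\pi<\frac{355}{113}$ instead.

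The repair is to redo the arithmetic with the correct bound $\frac1\Phi<\frac{40}{21}$. With $\pi^2<\frac{987}{100}$, the left-hand side of \eqref{eq:tailstar2} is at least $\frac{31}{16}\bigl(1-\frac{1974}{307200}\bigr)>1.92$ at $(1,32)$. At $(2,48)$ it is at least $\frac{23}{12}\bigl(1-\frac{1974}{691200}\bigr)>1.91$. Both exceed $\frac{40}{21}$. The paper does exactly this with the cruder $\pi^2<10$, obtaining $\frac{31}{16}\cdot\frac{763}{768}>1.92$ and $\frac{23}{12}\cdot\frac{1723}{1728}>1.91$. The slack against $\frac{40}{21}$ at $(2,48)$ is only about $0.006$, so this computation must actually be carried out. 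It is the bound on $\Phi$, not the one on $\pi$, that is delicate.
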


\begin{proof}
Discarding the subtracted term on the right of \eqref{eq:same} shows that
$\sin\frac{2\pi}N\,\Phi\ge\sin\theta$ is sufficient. By \ref{E:sin},
$\sin\frac{2\pi}N\ge\frac{2\pi}N\bigl(1-\frac{2\pi^2}{3N^2}\bigr)$ and
$\sin\theta\le\theta=\frac\pi{N-s}$, so \eqref{eq:tailstar2} suffices. The left side of
\eqref{eq:tailstar2} is a product of two positive factors, each strictly increasing in $N$. 
For the two threshold cases, \eqref{eq:mu2120} gives $\frac{1}{\Phi} <\frac{40}{21}<1.905$. Using $\pi^2<10$, the left-hand side of \eqref{eq:tailstar2} is at least $\frac{31}{16}\cdot\frac{763}{768}>1.92$ at $(s,N)=(1,32)$ and at least $\frac{23}{12}\cdot\frac{1723}{1728}>1.91$ at $(s,N)=(2,48)$.
\end{proof}

\begin{lemma}\label{lem:tailarc}
Let $N\ge2s+4$ be even, $\theta=\frac\pi{N-s}\le1$, and set
$\bar u=\sqrt{c_0}\,(\theta+\theta^3)$. If $\bar u^2<1+S^2T^2$ and
\begin{align}\label{eq:tailarc}
N\,\bar u\;\le\;\pi\bigl(1+S^2T^2-\bar u^2\bigr),
\end{align}
then \eqref{eq:cross} holds with $\theta=\frac\pi{N-s}$. The left-hand side of
\eqref{eq:tailarc} is strictly decreasing in $N$ and the right-hand side is increasing, so
\eqref{eq:tailarc}, once true, remains true for all larger $N$. It holds for $s=1$, $N=32$ and
for $s=2$, $N=48$, hence for all even $N\ge32$ (respectively $N\ge48$).
\end{lemma}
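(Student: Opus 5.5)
The plan is to bound the left-hand side of \eqref{eq:cross} from above using the elementary estimates, and to arrange that the resulting bound is at most $\frac{2\pi}N$ exactly when \eqref{eq:tailarc} holds.

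First, \ref{E:tan} with $y=\theta\le1$ gives $\tan\theta\le\theta+\theta^3$. Hence $u^*=\sqrt{c_0}\tan\theta\le\bar u$.

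Next, apply \ref{E:atan} with $p=ST+u^*$ and $q=ST-u^*$. Here $pq=S^2T^2-u^{*2}$, and $u^{*2}\le\bar u^2<1+S^2T^2$, so $pq>-1$. We get
\[
\arctan(ST+u^*)-\arctan(ST-u^*)=\arctan\frac{2u^*}{1+S^2T^2-u^{*2}}\le\frac{2u^*}{1+S^2T^2-u^{*2}}.
\]
The argument $z=u^*\ge0$ lies in $[0,\sqrt{1+S^2T^2})$. By \ref{E:mono} we may replace $u^*$ by the larger $\bar u$, which bounds the left side of \eqref{eq:cross} by $\frac{2\bar u}{1+S^2T^2-\bar u^2}$. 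This is at most $\frac{2\pi}N$ precisely when \eqref{eq:tailarc} holds, since the denominator is positive.

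For the monotonicity claims, note that $\theta=\frac\pi{N-s}$ decreases in $N$, so $\bar u$ decreases and $\pi(1+S^2T^2-\bar u^2)$ increases. For the left side, write
\[
N\bar u=\sqrt{c_0}\,\pi\Bigl(\frac N{N-s}\Bigr)\bigl(1+\theta^2\bigr).
\]
Both $\frac N{N-s}$ and $1+\theta^2$ decrease in $N$, and both are positive. So $N\bar u$ is strictly decreasing. Consequently, once \eqref{eq:tailarc} holds and $\bar u^2<1+S^2T^2$, both conditions persist for all larger $N$.

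The main obstacle is checking the two threshold cases $(s,N)=(1,32)$ and $(2,48)$ without a computer. I would bound $\pi$ by rationals, for example $3.14<\pi<3.1416$. At $N=32$, $s=1$ this gives $\theta=\pi/31\approx0.1013$, $\theta+\theta^3\approx0.1024$, and $\sqrt{c_0}\approx0.9923$, so $\bar u\approx0.1016$. Then $N\bar u\approx3.25$, while $\pi(1+0.0475-0.0103)\approx3.258$. The margin is thin, so I would use $\sqrt{c_0}<\frac{20}{21}(1+S^2T^2)$ from \eqref{eq:mu2120} and rational upper bounds for $\pi$ in $\bar u$ and lower bounds for $\pi$ on the right. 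If these are still too crude, I would square and compare rationals directly, as the verifier does (Table~\ref{tab:checks}).

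The case $N=48$, $s=2$ is easier: $\bar u\approx0.0678$, so $N\bar u\approx3.25$ against roughly $3.27$ on the right.
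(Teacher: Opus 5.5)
Your argument for the implication and for the monotonicity is the paper's proof step for step. You use \ref{E:tan} to get $u^*\le\bar u$, then \ref{E:atan} with $pq=S^2T^2-u^{*2}>-1$, then \ref{E:mono} and \eqref{eq:tailarc}, and for monotonicity the factorization $N\bar u=\pi\sqrt{c_0}\,\frac N{N-s}(1+\theta^2)$. The paper simply leaves the two threshold cases to the exact verifier (Table~\ref{tab:checks}).

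If you check the thresholds by hand, note two problems with your plan.

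First, the bound $\sqrt{c_0}<\frac{20}{21}(1+S^2T^2)$ from \eqref{eq:mu2120} is too weak. It overestimates $\sqrt{c_0}\approx0.99227$ by about $0.5\%$, while the margin in \eqref{eq:tailarc} is only about $0.25\%$. With it the check fails at both thresholds; at $(1,32)$ you get $3.268>3.258$ even with the exact value of $\pi$. A direct bound such as $\sqrt{c_0}<0.9923$ (valid since $9923^2\cdot1638400>1613183\cdot10^8$), together with $3.14<\pi<3.1416$, does work. It gives $3.2511<3.2567$ at $(1,32)$ and $3.2682<3.2746$ at $(2,48)$. This requires the upper bound for $\pi$ in every occurrence of $\bar u$, including $\bar u^2$ on the right, and the lower bound only for the explicit factor $\pi$.

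Second, your figures for $(2,48)$ are off. In fact $\bar u\approx0.0681$ and $N\bar u\approx3.268$ against $3.276$, so that case is no easier than $(1,32)$.
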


\begin{proof}
By \ref{E:tan}, $u^*=\sqrt{c_0}\tan\theta\le\bar u$, so $u^{*2}\le\bar u^2<1+S^2T^2$. By
\ref{E:atan} (the product $(ST+u^*)(ST-u^*)=S^2T^2-u^{*2}>-1$),
\begin{align*}
\arctan(ST+u^*)-\arctan(ST-u^*)&=\arctan\frac{2u^*}{1+S^2T^2-u^{*2}}
\le\frac{2u^*}{1+S^2T^2-u^{*2}}\\
&\le\frac{2\bar u}{1+S^2T^2-\bar u^2}\le\frac{2\pi}N,
\end{align*}
using \ref{E:mono} and \eqref{eq:tailarc}. For the monotonicity,
$N\bar u=\pi\sqrt{c_0}\,\frac N{N-s}\,\bigl(1+\theta^2\bigr)$ is a product of positive strictly
decreasing factors of $N$, while $\bar u$ itself decreases, so $\pi(1+S^2T^2-\bar u^2)$ increases.
The two threshold cases are in Table~\ref{tab:checks}.
\end{proof}

\subsection{The computer-checked inequalities}\label{subsec:checks}
Table~\ref{tab:checks} lists every numerical inequality that the pinwheel construction relies on.
In each of them, the ingredients $\pi$, $\sin$, $\tan$, $\arctan$ and $\sqrt{c_0}$ are replaced by rational bounds. Each bound is chosen on the side that makes the resulting inequality imply the original one. What remains is a comparison of two explicit rational numbers.
The standalone verifier \cite{MRVer}, invoked as
\texttt{cylie -i constructions}, performs all of them in exact arithmetic, with $512$-bit
overflow-checked integers and no floating point. The same run performs the checks
\ref{W:sphere}--\ref{W:sep} of Section~\ref{sec:window} and ends with the line
\texttt{ALL CONSTRUCTIONS VERIFIED}.

\begin{table}[t]
\begin{center}
\begin{tabular}{lll}
inequality & checked at & used in\\
\hline
\eqref{eq:same}, \eqref{eq:cross}, $\theta=\frac\pi{N-1}$ & even $N$, $20\le N\le30$ & Theorem~\ref{thm:B}\\
\eqref{eq:same}, \eqref{eq:cross}, $\theta=\frac\pi{N-2}$ & even $N$, $40\le N\le46$ & Theorem~\ref{thm:B}\\
\eqref{eq:tailarc} & $(s,N)=(1,32)$ and $(2,48)$ & Lemma~\ref{lem:tailarc}\\ 
\eqref{eq:mu2120} & independent of $N$ & Lemma~\ref{lem:tailstar2}\\
\end{tabular}
\end{center}
\caption{The numerical inequalities of the pinwheel construction, all checked by the verifier.}
\label{tab:checks}
\end{table}

\subsection{Proof of Theorem \ref{thm:B}}
Let $n\ge19$ be odd and $N=n+1$, so $N$ is even, $N\ge20$, and $\theta=\frac\pi n=\frac\pi{N-1}$.
If $20\le N\le30$, inequalities \eqref{eq:same} and \eqref{eq:cross} hold by Table~\ref{tab:checks}. 
If $N\ge32$, the inequalities hold by Lemmas~\ref{lem:tailstar2} and~\ref{lem:tailarc}. By
Theorem~\ref{thm:criterion} and Remark~\ref{rem:exact}, $N=n+1$ pairwise non-overlapping
cylinders of radius $\omega_n$ touch $B$, so $\mathcal N(\omega_n)\ge n+1$.

Let $n\ge38$ be even and $N=n+2$, so $N$ is even, $N\ge40$, and
$\theta=\frac\pi n=\frac\pi{N-2}$. If $40\le N\le46$, use Table~\ref{tab:checks}. If $N\ge48$,
use Lemmas~\ref{lem:tailstar2} and~\ref{lem:tailarc}. As before, $\mathcal N(\omega_n)\ge n+2$.
\qed

\section{The window: explicit configurations for $9\le n\le36$}\label{sec:window}
\begin{figure}[t]
\begin{tikzpicture}[scale=2.0, line cap=round, line join=round, dot/.style={circle, fill, inner sep=1.0pt}, odot/.style={circle, draw, inner sep=0.8pt, fill=white}]
\draw[line width=0.5pt] (0,0) circle (1);
\draw[thin] plot[smooth] coordinates {(-0.5608,-0.5474) (-0.4712,-0.5832) (-0.3765,-0.6125) (-0.2776,-0.6352) (-0.1756,-0.6509) (-0.0717,-0.6595) (0.0329,-0.6608) (0.1372,-0.6549) (0.2400,-0.6419) (0.3401,-0.6218) (0.4366,-0.5949) (0.5282,-0.5614) (0.6141,-0.5218) (0.6932,-0.4765) (0.7648,-0.4260) (0.8279,-0.3708) (0.8820,-0.3116) (0.9264,-0.2489) (0.9607,-0.1835) (0.9845,-0.1161) (0.9974,-0.0474) (1.0000,-0.0013)};
\draw[very thin, dashed] plot[smooth] coordinates {(0.9995,0.0218) (0.9905,0.0907) (0.9708,0.1587) (0.9404,0.2249) (0.8997,0.2887) (0.8491,0.3493) (0.7892,0.4060) (0.7207,0.4584) (0.6443,0.5057) (0.5608,0.5474) (0.4712,0.5832) (0.3765,0.6125) (0.2776,0.6352) (0.1756,0.6509) (0.0717,0.6595) (-0.0329,0.6608) (-0.1372,0.6549) (-0.2400,0.6419) (-0.3401,0.6218) (-0.4366,0.5949) (-0.5282,0.5614) (-0.6141,0.5218) (-0.6932,0.4765) (-0.7648,0.4260) (-0.8279,0.3708) (-0.8820,0.3116) (-0.9264,0.2489) (-0.9607,0.1835) (-0.9845,0.1161) (-0.9974,0.0474) (-1.0000,0.0013)};
\draw[thin] plot[smooth] coordinates {(-0.9995,-0.0218) (-0.9905,-0.0907) (-0.9708,-0.1587) (-0.9404,-0.2249) (-0.8997,-0.2887) (-0.8491,-0.3493) (-0.7892,-0.4060) (-0.7207,-0.4584) (-0.6443,-0.5057) (-0.5608,-0.5474) (-0.5608,-0.5474)};
\draw[thin] (-0.5608,-0.5474) -- (-0.6618,-0.6459);
\draw[thin] (0.5608,0.5474) -- (0.6618,0.6459);
\draw[very thin, dotted] (0.5608,0.5474) -- (-0.5608,-0.5474);
\draw[thin] (0.8279,-0.3708) -- (0.9769,-0.4376);
\draw[thin] (-0.8279,0.3708) -- (-0.9769,0.4376);
\draw[very thin, dotted] (-0.8279,0.3708) -- (0.8279,-0.3708);
\draw[thin] (0.0000,0.7502) -- (0.0000,0.8853);
\draw[thin] (0.0000,-0.7502) -- (0.0000,-0.8853);
\draw[very thin, dotted] (0.0000,-0.7502) -- (0.0000,0.7502);
\draw[very thin, dashed, red!80!black] (0.0191,-0.4232) -- (0.6207,-0.7610);
\draw[line width=1.0pt, red!80!black] (0.6384,-0.7709) -- (0.7269,-0.8206);
\node[odot, red!80!black] at (0.3730,-0.6219) {};
\draw[line width=1.0pt, red!80!black] (-0.5524,0.9910) -- (0.4714,0.9019);
\node[dot, red!80!black] at (-0.0405,0.9465) {};
\draw[line width=1.0pt, blue!80!black] (1.0980,0.0922) -- (0.5579,-0.8338);
\node[dot, blue!80!black] at (0.8279,-0.3708) {};
\draw[line width=1.0pt, green!55!black] (-0.5569,-0.9671) -- (0.0722,-0.5114);
\node[dot, green!55!black] at (-0.2423,-0.7392) {};
\draw[line width=1.0pt, green!55!black] (0.5537,0.9685) -- (0.6653,0.7526);
\draw[very thin, dashed, green!55!black] (0.6777,0.7286) -- (0.9878,0.1288);
\draw[line width=1.0pt, green!55!black] (1.0003,0.1048) -- (1.0499,0.0088);
\node[odot, green!55!black] at (0.8018,0.4887) {};
\draw[line width=1.0pt, red!80!black] (-0.0190,-1.1045) -- (-0.1252,-1.0015);
\draw[very thin, dashed, red!80!black] (-0.1429,-0.9844) -- (-0.7268,-0.4180);
\node[odot, red!80!black] at (-0.3729,-0.7613) {};
\draw[line width=1.0pt, red!80!black] (0.5523,0.2476) -- (-0.4715,0.6257);
\node[dot, red!80!black] at (0.0404,0.4367) {};
\draw[line width=1.0pt, blue!80!black] (-1.0980,0.3066) -- (-0.9495,0.3419);
\draw[very thin, dashed, blue!80!black] (-0.9360,0.3451) -- (-0.5579,0.4350);
\node[odot, blue!80!black] at (-0.8279,0.3708) {};
\draw[very thin, dashed, green!55!black] (0.5569,0.1156) -- (-0.0723,0.9454);
\node[odot, green!55!black] at (0.2423,0.5305) {};
\draw[line width=1.0pt, green!55!black] (-0.5537,-0.1170) -- (-1.0498,-0.4428);
\node[dot, green!55!black] at (-0.8018,-0.2799) {};
\end{tikzpicture}
\hfill
\begin{tikzpicture}[scale=2.0, line cap=round, line join=round, dot/.style={circle, fill, inner sep=1.0pt}, odot/.style={circle, draw, inner sep=0.8pt, fill=white}]
\draw[line width=0.5pt] (0,0) circle (1);
\draw[thin] plot[smooth] coordinates {(-0.5608,-0.5474) (-0.4712,-0.5832) (-0.3765,-0.6125) (-0.2776,-0.6352) (-0.1756,-0.6509) (-0.0717,-0.6595) (0.0329,-0.6608) (0.1372,-0.6549) (0.2400,-0.6419) (0.3401,-0.6218) (0.4366,-0.5949) (0.5282,-0.5614) (0.6141,-0.5218) (0.6932,-0.4765) (0.7648,-0.4260) (0.8279,-0.3708) (0.8820,-0.3116) (0.9264,-0.2489) (0.9607,-0.1835) (0.9845,-0.1161) (0.9974,-0.0474) (1.0000,-0.0013)};
\draw[very thin, dashed] plot[smooth] coordinates {(0.9995,0.0218) (0.9905,0.0907) (0.9708,0.1587) (0.9404,0.2249) (0.8997,0.2887) (0.8491,0.3493) (0.7892,0.4060) (0.7207,0.4584) (0.6443,0.5057) (0.5608,0.5474) (0.4712,0.5832) (0.3765,0.6125) (0.2776,0.6352) (0.1756,0.6509) (0.0717,0.6595) (-0.0329,0.6608) (-0.1372,0.6549) (-0.2400,0.6419) (-0.3401,0.6218) (-0.4366,0.5949) (-0.5282,0.5614) (-0.6141,0.5218) (-0.6932,0.4765) (-0.7648,0.4260) (-0.8279,0.3708) (-0.8820,0.3116) (-0.9264,0.2489) (-0.9607,0.1835) (-0.9845,0.1161) (-0.9974,0.0474) (-1.0000,0.0013)};
\draw[thin] plot[smooth] coordinates {(-0.9995,-0.0218) (-0.9905,-0.0907) (-0.9708,-0.1587) (-0.9404,-0.2249) (-0.8997,-0.2887) (-0.8491,-0.3493) (-0.7892,-0.4060) (-0.7207,-0.4584) (-0.6443,-0.5057) (-0.5608,-0.5474) (-0.5608,-0.5474)};
\draw[thin] (-0.5608,-0.5474) -- (-0.6618,-0.6459);
\draw[thin] (0.5608,0.5474) -- (0.6618,0.6459);
\draw[very thin, dotted] (0.5608,0.5474) -- (-0.5608,-0.5474);
\draw[line width=1.0pt, black] (0.9382,0.5716) -- (0.8850,0.4795);
\draw[very thin, dashed, black] (0.8744,0.4610) -- (0.5130,-0.1657);
\node[odot, black] at (0.7256,0.2030) {};
\draw[very thin, dashed, black] (0.2183,0.5572) -- (0.3565,0.9257);
\draw[line width=1.0pt, black] (0.3612,0.9384) -- (0.4089,1.0654);
\node[odot, black] at (0.3136,0.8113) {};
\draw[line width=1.0pt, black] (0.5228,-0.8215) -- (1.0489,-0.3685);
\node[dot, black] at (0.7858,-0.5950) {};
\draw[line width=1.0pt, black] (-1.1130,0.2454) -- (-0.8590,0.5161);
\draw[very thin, dashed, black] (-0.8420,0.5341) -- (-0.5880,0.8047);
\draw[line width=1.0pt, black] (-0.5710,0.8228) -- (-0.4355,0.9671);
\node[odot, black] at (-0.7743,0.6063) {};
\draw[line width=1.0pt, black] (-0.1392,0.2895) -- (0.8147,-0.1607);
\node[dot, black] at (0.3378,0.0644) {};
\draw[line width=1.0pt, black] (-0.5336,-0.9462) -- (-0.5852,-0.8239);
\draw[very thin, dashed, black] (-0.5955,-0.7994) -- (-0.9463,0.0322);
\node[odot, black] at (-0.7399,-0.4570) {};
\draw[line width=1.0pt, black] (-0.0749,-1.0856) -- (0.2195,-0.9764);
\draw[very thin, dashed, black] (0.2358,-0.9703) -- (0.4648,-0.8854);
\draw[line width=1.0pt, black] (0.4811,-0.8793) -- (0.5793,-0.8429);
\node[odot, black] at (0.2522,-0.9643) {};
\draw[line width=1.0pt, black] (-0.8785,0.1552) -- (-0.3795,1.0378);
\node[dot, black] at (-0.6290,0.5965) {};
\draw[line width=1.0pt, black] (0.1575,0.8159) -- (1.0381,0.3126);
\node[dot, black] at (0.5978,0.5642) {};
\draw[very thin, dashed, black] (-0.1025,-0.7622) -- (-0.4415,0.2697);
\node[odot, black] at (-0.2720,-0.2463) {};
\draw[line width=1.0pt, black] (-0.9417,-0.6030) -- (-0.1800,-0.4918);
\node[dot, black] at (-0.5609,-0.5474) {}; 
\end{tikzpicture}
\hfill
\begin{tikzpicture}[scale=2.0, line cap=round, line join=round, dot/.style={circle, fill, inner sep=1.0pt}, odot/.style={circle, draw, inner sep=0.8pt, fill=white}]
\draw[line width=0.5pt] (0,0) circle (1);
\draw[thin] plot[smooth] coordinates {(-0.7071,-0.4082) (-0.6293,-0.4487) (-0.5446,-0.4842) (-0.4540,-0.5144) (-0.3584,-0.5390) (-0.2588,-0.5577) (-0.1564,-0.5702) (-0.0523,-0.5766) (0.0523,-0.5766) (0.1564,-0.5702) (0.2588,-0.5577) (0.3584,-0.5390) (0.4540,-0.5144) (0.5446,-0.4842) (0.6293,-0.4487) (0.7071,-0.4082) (0.7771,-0.3633) (0.8387,-0.3144) (0.8910,-0.2621) (0.9336,-0.2069) (0.9659,-0.1494) (0.9877,-0.0903) (0.9986,-0.0302) (0.9998,-0.0101)};
\draw[very thin, dashed] plot[smooth] coordinates {(0.9998,0.0101) (0.9925,0.0704) (0.9744,0.1299) (0.9455,0.1880) (0.9063,0.2440) (0.8572,0.2974) (0.7986,0.3475) (0.7314,0.3938) (0.6561,0.4357) (0.5736,0.4729) (0.4848,0.5050) (0.3907,0.5315) (0.2924,0.5521) (0.1908,0.5667) (0.0872,0.5752) (-0.0175,0.5773) (-0.1219,0.5730) (-0.2250,0.5626) (-0.3256,0.5459) (-0.4226,0.5233) (-0.5150,0.4949) (-0.6018,0.4611) (-0.6820,0.4222) (-0.7547,0.3788) (-0.8192,0.3312) (-0.8746,0.2799) (-0.9205,0.2256) (-0.9563,0.1688) (-0.9816,0.1102) (-0.9962,0.0503) (-0.9998,0.0101)};
\draw[thin] plot[smooth] coordinates {(-0.9998,-0.0101) (-0.9925,-0.0704) (-0.9744,-0.1299) (-0.9455,-0.1880) (-0.9063,-0.2440) (-0.8572,-0.2974) (-0.7986,-0.3475) (-0.7314,-0.3938) (-0.7071,-0.4082)};
\draw[thin] (-0.7071,-0.4082) -- (-0.8344,-0.4817);
\draw[thin] (0.7071,0.4082) -- (0.8344,0.4817);
\draw[very thin, dotted] (0.7071,0.4082) -- (-0.7071,-0.4082);
\draw[thin] (0.7071,-0.4082) -- (0.8344,-0.4817);
\draw[thin] (-0.7071,0.4082) -- (-0.8344,0.4817);
\draw[very thin, dotted] (-0.7071,0.4082) -- (0.7071,-0.4082);
\draw[thin] (0.0000,0.8165) -- (0.0000,0.9635);
\draw[thin] (0.0000,-0.8165) -- (0.0000,-0.9635);
\draw[very thin, dotted] (0.0000,-0.8165) -- (0.0000,0.8165);
\draw[line width=1.0pt, black] (-0.4472,-0.9655) -- (-0.4472,-0.0673);
\node[dot, black] at (-0.4472,-0.5164) {};
\draw[line width=1.0pt, black] (-0.8944,0.1909) -- (-0.8944,-0.7073);
\node[dot, black] at (-0.8944,-0.2582) {};
\draw[line width=1.0pt, black] (0.8944,0.7073) -- (0.8944,0.4603);
\draw[very thin, dashed, black] (0.8944,0.4378) -- (0.8944,-0.1909);
\node[odot, black] at (0.8944,0.2582) {};
\draw[very thin, dashed, black] (0.4472,0.0673) -- (0.4472,0.8757);
\draw[line width=1.0pt, black] (0.4472,0.8981) -- (0.4472,0.9655);
\node[odot, black] at (0.4472,0.5164) {};
\draw[line width=1.0pt, black] (1.0597,0.0954) -- (0.2819,-0.3536);
\node[dot, black] at (0.6708,-0.1291) {};
\draw[line width=1.0pt, black] (-0.6125,0.8700) -- (0.1653,0.4210);
\node[dot, black] at (-0.2236,0.6455) {};
\draw[line width=1.0pt, black] (0.1653,-1.1282) -- (-0.0486,-1.0047);
\draw[very thin, dashed, black] (-0.0680,-0.9935) -- (-0.6125,-0.6792);
\node[odot, black] at (-0.2236,-0.9037) {};
\draw[line width=1.0pt, black] (0.2819,-0.8700) -- (1.0597,-0.4210);
\node[dot, black] at (0.6708,-0.6455) {};
\draw[line width=1.0pt, black] (0.6125,0.6792) -- (-0.1653,1.1282);
\node[dot, black] at (0.2236,0.9037) {};
\draw[very thin, dashed, black] (-0.2819,0.3536) -- (-0.9819,-0.0505);
\draw[line width=1.0pt, black] (-1.0014,-0.0618) -- (-1.0597,-0.0954);
\node[odot, black] at (-0.6708,0.1291) {};
\draw[very thin, dashed, black] (-0.1653,-0.4210) -- (0.5347,-0.8251);
\draw[line width=1.0pt, black] (0.5542,-0.8363) -- (0.6125,-0.8700);
\node[odot, black] at (0.2236,-0.6455) {};
\draw[line width=1.0pt, black] (-1.0597,0.4210) -- (-0.8458,0.5445);
\draw[very thin, dashed, black] (-0.8264,0.5557) -- (-0.2819,0.8700);
\node[odot, black] at (-0.6708,0.6455) {}; 
\end{tikzpicture}
\caption{The configurations that show $(9,\omega_9)$, $(10,\omega_{10})$ and
$(11,\omega_{11})\notin\Pcyl$.}
\label{fig:n9n10n11}
\end{figure}

For each $n$ in the window left open by Theorems~\ref{thm:A} and~\ref{thm:B}, except $n=8$,
we give $n+1$ tangent lines with rational data. Their pairwise distances are computed exactly and
compared with a rational upper bound for $2\sin\frac\pi n$. The pinwheel is not used in this
section.

\begin{theorem}\label{thm:window}
Let $W=\{9,10,\dots,18\}\cup\{20,22,\dots,36\}$. For every $n\in W$,
\[
\mathcal N(\omega_n)\ \ge\ n+1,\quad\text{i.e.}\quad (n,\omega_n)\notin\Pcyl .
\]
\end{theorem}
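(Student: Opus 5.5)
By Lemma~\ref{lem:dictionary} and Remark~\ref{rem:ringTau}, it suffices to exhibit, for each $n\in W$, a family of $n+1$ lines tangent to $\Sph^2$ whose pairwise distances are all at least $2\sin\frac\pi n$. The plan is to produce these families by numerical optimization, round them to rational data, and certify each one in exact arithmetic. I will not use the pinwheel here.

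\emph{Step 1: representing a line with rational data.} A tangent line is encoded by a rational point $\overrightarrow{p}$ on $\Sph^2$ together with a rational unit vector $\overrightarrow{v}$ satisfying $\overrightarrow{p}\cdot\overrightarrow{v}=0$. Rational points on the sphere are dense; they arise, for example, from inverse stereographic projection of rational points in the plane. A rational unit vector orthogonal to $\overrightarrow{p}$ can then be written as a rational combination of two rational vectors spanning $\overrightarrow{p}^{\perp}$, normalised through a rational parametrisation of the circle.

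To make rounding painless, I would instead let $\overrightarrow{v}$ be an arbitrary rational vector orthogonal to $\overrightarrow{p}$, with no normalisation. The formulas of Lemma~\ref{lem:linedist} are homogeneous in the direction vectors, so nothing is lost. The conditions to verify are then:
\begin{enumerate}[label=(W\arabic*),leftmargin=3em]
\item $\|\overrightarrow{p_i}\|^2=1$ and $\overrightarrow{p_i}\cdot\overrightarrow{v_i}=0$, which are exact rational identities;
\item for non-parallel pairs, $\bigl((\overrightarrow{p_j}-\overrightarrow{p_i})\cdot(\overrightarrow{v_i}\times\overrightarrow{v_j})\bigr)^2\ge \rho_n^2\,\|\overrightarrow{v_i}\times\overrightarrow{v_j}\|^2$, where $\rho_n\in\Q$ is a rational upper bound for $2\sin\frac\pi n$;
\item for parallel pairs, the corresponding condition from the second case of Lemma~\ref{lem:linedist}.
\end{enumerate}
The bound $\rho_n$ can be obtained from \ref{E:sin} combined with a rational upper bound for $\pi$. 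Once both sides are squared, every comparison involves only rational numbers.

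\emph{Step 2: finding the configurations.} For each $n$ I would maximise the minimum pairwise distance of $n+1$ tangent lines numerically. Natural starting points are perturbed pinwheels, symmetric arrangements, or random configurations, followed by local optimisation of the max-min objective. The figures suggest that symmetric families work well; for $n=11$, for instance, the configuration looks like a structured arrangement with parallel triples. The optimisation must achieve a strict margin above $2\sin\frac\pi n$, since the margin has to absorb both the rounding error and the gap between $2\sin\frac\pi n$ and $\rho_n$.

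\emph{Step 3: exact certification.} I would then replace the numerical data by nearby rational data satisfying (W1) exactly, and run the verifier on (W1)--(W3) for every pair.

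The main obstacle is the small values, $n=9$ and $10$. There the slack over $2\sin\frac\pi n$ is probably tiny, because $n=8$ remains open and the optimum at $n=9$ is presumably close to critical. Achieving a positive margin may require many restarts, or exploiting symmetry to reduce the number of parameters. Rounding must also preserve exact tangency, which rules out naive rounding of floating-point coordinates.

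For $n=9$ and $10$ I would first search within symmetric ansätze, such as 3-fold or 5-fold rotational symmetry about an axis. These cut down the number of distinct distance constraints and make it easier to push the optimum above the threshold. Afterwards I would choose rational parameters inside the feasible region with the stereographic encoding.
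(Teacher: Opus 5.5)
Your proposal matches the paper's proof. For each $n\in W$ the paper deposits $n+1$ integer rows $(A,B,C,E,V_0,V_1,V_2)$ with unnormalized integer directions, obtained by numerical optimization and rounding. It checks tangency and every pairwise squared distance exactly against a rational upper bound for $4\sin^2\frac\pi n$, and then applies Lemma~\ref{lem:dictionary}.

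The one point to sharpen is your $\rho_n$. If you take it from the upper half of \ref{E:sin}, $\sin x\le x$, you inflate the threshold by a relative amount $\approx\pi^2/(6n^2)$, about $2\%$ at $n=9$, exactly where you expect the slack to be tiny. The paper instead integrates the lower half of \ref{E:sin} three times to get $4\sin^2\frac\pi n\le r_n=Y_n^2-\frac{Y_n^4}{12}+\frac{Y_n^6}{360}$ with $Y_n=\frac{710}{113n}$ (Lemma~\ref{lem:sinbound}), whose loss is negligible.
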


\begin{proof}[Proof of Theorem \ref{thm:Ge9}]
Theorem~\ref{thm:window} covers $W$, and Theorem~\ref{thm:B} covers every odd $n\ge19$ and
every even $n\ge38$. The union is $\{n\ge9\}$.
\end{proof}

Together with Theorem~\ref{thm:A}, the
insertion question (Question~\ref{Q:insertion}) is decided for every $n\ge3$ except $n=8$: the ring
of $n$ is count-optimal at its own radius for $3\le n\le7$ and is not for any $n\ge9$.

\subsection{Rational tangent lines}

The witnesses are lines tangent to $\Sph^2$ at rational points, with integer directions.
Normalizing the direction is unnecessary, because the distance formula of
Lemma~\ref{lem:linedist} is homogeneous of degree zero in each direction vector.

\begin{definition}\label{def:ratline}
A \emph{rational tangent line} is a pair $(P,\overrightarrow{V})$ of integer vectors $P=(A,B,C,E)\in\Z^4$ and
$\overrightarrow{V}=(V_0,V_1,V_2)\in\Z^3\setminus\{0\}$ with
\begin{align}\label{eq:W12}
\text{(W1)}\quad A^2+B^2+C^2=E^2,\ \ E>0,\quad
\text{(W2)}\quad AV_0+BV_1+CV_2=0,\ \ \overrightarrow{V}\ne0 .
\end{align}
It denotes the line $\ell(P,\overrightarrow{V})=\{\overrightarrow{p}+t\overrightarrow{V}:t\in\R\}$ with $\overrightarrow{p}=\frac1E(A,B,C)$, which by (W1) and
(W2) is tangent to the unit sphere at $\overrightarrow{p}$.
\end{definition}

The following result follows from Lemma~\ref{lem:linedist}. In the parallel case we also use the
identity
$\|\overrightarrow{x}\times\overrightarrow{y}\|^2=\|\overrightarrow{x}\|^2\|\overrightarrow{y}\|^2-(\overrightarrow{x}\cdot\overrightarrow{y})^2$
for $\overrightarrow{x},\overrightarrow{y}\in\R^3$.
\begin{lemma}\label{lem:ratdist}
Let $\ell=\ell(P,\overrightarrow{V})$ and $\ell'=\ell(P',\overrightarrow{V'})$ be rational tangent lines with tangency points
$\overrightarrow{p},\overrightarrow{p'}$. Define $\overrightarrow{c}=\overrightarrow{V}\times \overrightarrow{V'}$ and $\overrightarrow{\Delta}=\overrightarrow{p'}-\overrightarrow{p}$. Then $\dist(\ell,\ell')^2$ is the rational
number
\begin{align}
\frac{(\overrightarrow{c}\cdot\overrightarrow{\Delta})^2}{\|\overrightarrow{c}\|^2}\quad\text{if }\overrightarrow{c}\ne0,\quad
\Bigl\|\overrightarrow{\Delta}-\frac{\overrightarrow{\Delta}\cdot \overrightarrow{V}}{\|\overrightarrow{V}\|^2}\,\overrightarrow{V}\Bigr\|^{2}\quad\text{if }\overrightarrow{c}=0 . \label{eqn:ratdist}
\end{align}
\end{lemma}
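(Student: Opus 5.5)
The plan is to derive the lemma directly from Lemma~\ref{lem:linedist}, applied to the unnormalized direction vectors $\overrightarrow{V},\overrightarrow{V'}$ and the base points $\overrightarrow{p},\overrightarrow{p'}$. By Definition~\ref{def:ratline}, $\ell$ is the set $\{\overrightarrow{p}+t\overrightarrow{V}\}$, and similarly for $\ell'$. These are exactly the parametrizations to which Lemma~\ref{lem:linedist} applies; that lemma does not require unit directions. I split into two cases according to whether $\overrightarrow{c}=\overrightarrow{V}\times\overrightarrow{V'}$ vanishes. Since cross products are nonzero exactly for nonparallel vectors, this matches the case split of Lemma~\ref{lem:linedist}.

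\emph{Nonparallel case.} If $\overrightarrow{c}\ne0$, Lemma~\ref{lem:linedist} gives $\dist(\ell,\ell')=|\overrightarrow{\Delta}\cdot\overrightarrow{c}|/\|\overrightarrow{c}\|$. Squaring yields the first expression in \eqref{eqn:ratdist}. Rationality follows because $\overrightarrow{c}\in\Z^3$ and $\overrightarrow{\Delta}=\frac1{E'}(A',B',C')-\frac1E(A,B,C)\in\Q^3$. Hence $(\overrightarrow{c}\cdot\overrightarrow{\Delta})^2$ and $\|\overrightarrow{c}\|^2$ are rational, and the latter is a positive integer.

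\emph{Parallel case.} If $\overrightarrow{c}=0$, Lemma~\ref{lem:linedist} gives $\dist^2=\|\overrightarrow{\Delta}\times\overrightarrow{V}\|^2/\|\overrightarrow{V}\|^2$. Apply the quoted identity with $\overrightarrow{x}=\overrightarrow{\Delta}$ and $\overrightarrow{y}=\overrightarrow{V}$ to rewrite this as $\|\overrightarrow{\Delta}\|^2-(\overrightarrow{\Delta}\cdot\overrightarrow{V})^2/\|\overrightarrow{V}\|^2$. Next, expand the second expression of \eqref{eqn:ratdist}. Writing $\lambda=(\overrightarrow{\Delta}\cdot\overrightarrow{V})/\|\overrightarrow{V}\|^2$, we get $\|\overrightarrow{\Delta}-\lambda\overrightarrow{V}\|^2=\|\overrightarrow{\Delta}\|^2-2\lambda(\overrightarrow{\Delta}\cdot\overrightarrow{V})+\lambda^2\|\overrightarrow{V}\|^2$, which simplifies to the same quantity. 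This shows the two forms agree. Rationality is immediate, since $\lambda\in\Q$ (as $\overrightarrow{V}\neq0$ is an integer vector) and $\overrightarrow{\Delta}\in\Q^3$.

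There is essentially no obstacle here. The only points needing care are two. First, Lemma~\ref{lem:linedist} must be applied with non-unit direction vectors, which is legitimate because both of its formulas are homogeneous of degree zero in each direction. Second, the projection expression in the parallel case must be verified to equal the cross-product formula, which the identity handles.
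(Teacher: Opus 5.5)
Your proposal is correct and follows the paper's argument. Like the paper, you apply Lemma~\ref{lem:linedist} directly to the unnormalized integer directions, and in the parallel case you use the identity $\|\overrightarrow{x}\times\overrightarrow{y}\|^2=\|\overrightarrow{x}\|^2\|\overrightarrow{y}\|^2-(\overrightarrow{x}\cdot\overrightarrow{y})^2$ to reach the projection form.
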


The threshold $2\sin\frac\pi n$ is irrational. We replace it by a slightly larger rational number.

\begin{lemma}\label{lem:sinbound}
The inequality $4\sin^2\frac\pi n \le r_n$ holds for every integer $n\geq 1$, where the rational numbers $Y_n$ and $r_n$ are defined as
$Y_n=\frac{710}{113\,n}$  and  
$r_n=Y_n^{2}-\frac{Y_n^{4}}{12}+\frac{Y_n^{6}}{360}$. 
\end{lemma}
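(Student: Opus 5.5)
We prove the bound by writing $4\sin^2 x=2(1-\cos 2x)$, truncating the alternating Taylor series of the cosine, and then replacing $\pi$ by the rational upper bound $\frac{355}{113}$. Put $x=\frac\pi n$ and $y=2x=\frac{2\pi}{n}$. Then
\[
4\sin^2 x=2(1-\cos y)=y^2-\frac{y^4}{12}+\frac{2y^6}{6!}-\frac{2y^8}{8!}+\cdots .
\]
Since $\frac{2}{720}=\frac1{360}$, the first three terms are exactly $f(y)=y^2-\frac{y^4}{12}+\frac{y^6}{360}$.

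\textbf{Step 1: the truncation is an upper bound.} We use the classical inequality $\cos y\ge 1-\frac{y^2}{2}+\frac{y^4}{24}-\frac{y^6}{720}$ for all real $y$. It follows by integrating $\sin t\le t$ (that is, \ref{E:sin}) repeatedly from $0$: one integration gives $1-\cos y\le \frac{y^2}2$, the next gives $\sin y\ge y-\frac{y^3}6$, and so on up to the sixth-degree term. Because the cosine is even, it suffices to check $y\ge0$. Hence $4\sin^2\frac\pi n\le f\bigl(\frac{2\pi}{n}\bigr)$ for every $n\ge1$.

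\textbf{Step 2: monotonicity of $f$.} We have $f'(y)=2y-\frac{y^3}{3}+\frac{y^5}{60}=\frac{y}{60}\,(y^4-20y^2+120)$. The quadratic $z^2-20z+120$ in $z=y^2$ has discriminant $400-480<0$, so it is positive everywhere. Therefore $f$ is increasing on $[0,\infty)$. This discriminant computation is the one genuinely needed check; without it the substitution in Step 3 would fail for small $n$, where $y$ is large.

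\textbf{Step 3: replace $\pi$ by $\frac{355}{113}$.} Use the known bound $\pi<\frac{355}{113}$, which can be quoted or certified by any rational enclosure of $\pi$. This gives $0\le \frac{2\pi}n\le \frac{710}{113n}=Y_n$. By monotonicity of $f$,
\[
4\sin^2\frac\pi n\le f\Bigl(\frac{2\pi}n\Bigr)\le f(Y_n)=r_n .
\]
We expect no step to be a real obstacle. The only point requiring care is the sign of $f'$, and it holds globally, so the argument needs no case split in $n$.
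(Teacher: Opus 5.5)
Your proposal is correct and follows the paper's proof essentially step for step. Both use the sixth-order lower bound for $\cos y$ obtained from \ref{E:sin} by repeated integration, then show $\rho(y)=y^2-\frac{y^4}{12}+\frac{y^6}{360}$ is increasing via $\rho'(y)=\frac{y}{60}(y^4-20y^2+120)$ and its negative discriminant, and finish with $\pi<\frac{355}{113}$. The only cosmetic difference is that you integrate five times from $\sin t\le t$, while the paper integrates three times from $\sin t\ge t-\frac{t^3}{6}$.
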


\begin{proof}
For $y\ge0$,
\begin{align}\label{eq:cos6}
\cos y\ \ge\ 1-\frac{y^2}2+\frac{y^4}{24}-\frac{y^6}{720}.
\end{align}
Indeed, the difference of the two sides vanishes at $0$, its derivative
$-\sin y+y-\frac{y^3}6+\frac{y^5}{120}$ vanishes at $0$, the derivative of that,
$-\cos y+1-\frac{y^2}2+\frac{y^4}{24}$, vanishes at $0$, and its derivative
$\sin y-y+\frac{y^3}6$ is nonnegative by \ref{E:sin}. Integrating three times from $0$ gives
\eqref{eq:cos6}. Hence $4\sin^2\frac\pi n=2\bigl(1-\cos\frac{2\pi}n\bigr)\le\rho(2\pi/n)$ with
$\rho(y)=y^2-\frac{y^4}{12}+\frac{y^6}{360}$. Now
$\rho'(y)=\frac{y}{60}\bigl(y^4-20y^2+120\bigr)>0$ for $y>0$ (the quadratic in $y^2$ has
negative discriminant), so $\rho$ is increasing on $[0,\infty)$, and since $\pi<\frac{355}{113}$
we have $\frac{2\pi}n<Y_n$ and $\rho(2\pi/n)\le\rho(Y_n)=r_n$.
\end{proof}

\subsection{The deposited configurations and their verification}

For each $N\in\{10,11,12,\dots,21\}\cup\{23,25,\dots,37\}$ a file
\[
\texttt{cyl\_window\_N}\{N\}\texttt{.txt}
\]
contains a configuration that, for $n=N-1$, proves $(n,\omega_n)\not\in \Pcyl$. The file contains $N$ rows of seven integers
$A\ B\ C\ E\ V_0\ V_1\ V_2$. Each row corresponds to one rational tangent line. 
All integers have at most $13$ decimal digits. The verifier (Section~\ref{subsec:checks})
performs the following checks:
\begin{enumerate}[leftmargin=2.6em,label=(W\arabic*)]
\item\label{W:sphere} for every row, $A^2+B^2+C^2=E^2$ and $E>0$;
\item\label{W:tangent} for every row, $AV_0+BV_1+CV_2=0$ and $(V_0,V_1,V_2)\ne(0,0,0)$;
\item\label{W:sep} for every pair of rows, the rational squared distance of
Lemma~\ref{lem:ratdist} is at least $r_n$.
\end{enumerate}

\begin{proof}[Proof of Theorem~\ref{thm:window}]
Let $n\in W$ and $N=n+1$. The deposited file with $N$ rows passes \ref{W:sphere}--\ref{W:sep}.
By \ref{W:sphere} and \ref{W:tangent} the rows are $N$ lines tangent to $\Sph^2$. By
Lemma~\ref{lem:ratdist} and \ref{W:sep} their pairwise squared distances are at least $r_n$, and
by Lemma~\ref{lem:sinbound} $r_n\ge4\sin^2\frac\pi n=\tau(\omega_n)^2$
(Remark~\ref{rem:ringTau}). So $N$ tangent lines have pairwise distances at least
$\tau(\omega_n)$, and Lemma~\ref{lem:dictionary} gives $N=n+1$ non-overlapping cylinders of
radius $\omega_n$ touching $B$.
\end{proof}

\section{The two-point bound}\label{sec:DBSequence}

The bound we use is the two-point bound for packing problems on a compact group. For the rotation
group it was first worked out by de Oliveira Filho, Spomer, and Vallentin \cite{SOV}, in the
framework of de Laat and Vallentin \cite{dLV}. We state and prove it in the form we need.

Throughout this section, $\tau\in(0,2)$ is fixed, $L\in\mathbb N$, and, for $\ell=1,\dots,L$, we fix
\begin{enumerate}[leftmargin=2.2em,label=(\roman*)]
\item a symmetric positive-definite matrix $D_\ell\in\R^{n_\ell\times n_\ell}$, and
\item a map $T^\ell:\SO(3)\to\R^{n_\ell\times n_\ell}$ satisfying, for all $g_1,g_2,g\in\SO(3)$,
\[
T^\ell(g_1g_2)=T^\ell(g_1)\,T^\ell(g_2)
\quad\text{and}\quad
T^\ell(g)^\top D_\ell\, T^\ell(g)=D_\ell .
\]
\end{enumerate}

\begin{lemma}\label{lem:Torth}
Under (i)--(ii), for every $g\in\SO(3)$: $T^\ell(e)=I$ and
\[
T^\ell(g^{-1})\;=\;T^\ell(g)^{-1}\;=\;D_\ell^{-1}\,T^\ell(g)^\top D_\ell .
\]
\end{lemma}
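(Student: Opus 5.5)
The plan is to derive both identities directly from the two hypotheses in (ii): the homomorphism property $T^\ell(g_1g_2)=T^\ell(g_1)T^\ell(g_2)$ and the invariance $T^\ell(g)^\top D_\ell T^\ell(g)=D_\ell$. The key observation is that invertibility of each $T^\ell(g)$ comes for free from the invariance relation, because $D_\ell$ is positive definite and hence invertible.

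First, I will show that every $T^\ell(g)$ is invertible. Taking determinants in $T^\ell(g)^\top D_\ell T^\ell(g)=D_\ell$ gives
\[
\det\bigl(T^\ell(g)\bigr)^2\det D_\ell=\det D_\ell .
\]
Since $\det D_\ell>0$, this forces $\det T^\ell(g)=\pm1\neq0$. A cleaner route avoids determinants: multiply the invariance relation on the left by $D_\ell^{-1}$ to get
\[
\bigl(D_\ell^{-1}T^\ell(g)^\top D_\ell\bigr)\,T^\ell(g)=I .
\]
For square matrices a left inverse is a two-sided inverse. Hence $T^\ell(g)$ is invertible and
\[
T^\ell(g)^{-1}=D_\ell^{-1}T^\ell(g)^\top D_\ell .
\]
This already gives the second equality of the lemma.

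Next, I will treat the identity element. Applying the homomorphism property with $g_1=g_2=e$ gives $T^\ell(e)=T^\ell(e)^2$. Multiplying this by $T^\ell(e)^{-1}$, which exists by the previous step, yields $T^\ell(e)=I$.

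Finally, I will handle inverses. Applying the homomorphism property with $g_1=g$ and $g_2=g^{-1}$ gives
\[
T^\ell(g)\,T^\ell(g^{-1})=T^\ell(e)=I .
\]
Therefore $T^\ell(g^{-1})=T^\ell(g)^{-1}$, which completes the chain of equalities.

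There is no real obstacle in this argument. The only point that needs care is that idempotence of $T^\ell(e)$ alone does not force $T^\ell(e)=I$. Invertibility is essential, and it must come from the positive-definiteness of $D_\ell$ in (i), which is why I establish it first.
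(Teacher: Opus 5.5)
Your proposal is correct and follows essentially the same argument as the paper: obtain the left inverse $D_\ell^{-1}T^\ell(g)^\top D_\ell$ from the invariance relation, use invertibility together with $T^\ell(e)=T^\ell(e)^2$ to get $T^\ell(e)=I$, and then read off $T^\ell(g^{-1})=T^\ell(g)^{-1}$ from $T^\ell(g)\,T^\ell(g^{-1})=T^\ell(e)$. The only small point is that invertibility of $D_\ell$, rather than its positive definiteness, is all that is needed here.
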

\begin{proof}
Since $D_\ell$ is invertible, the identity $T^\ell(g)^\top D_\ell\,T^\ell(g)=D_\ell$ gives
$\bigl(D_\ell^{-1}T^\ell(g)^\top D_\ell\bigr)\,T^\ell(g)=I$. So $T^\ell(g)$ is invertible and
$T^\ell(g)^{-1}=D_\ell^{-1}T^\ell(g)^\top D_\ell$. Now $T^\ell(e)=T^\ell(e\cdot e)=T^\ell(e)^2$ with
$T^\ell(e)$ invertible forces $T^\ell(e)=I$. Finally $T^\ell(g)\,T^\ell(g^{-1})=T^\ell(e)=I$ gives
$T^\ell(g^{-1})=T^\ell(g)^{-1}$.
\end{proof}

\begin{definition}\label{def:DB}
For a sequence $\overrightarrow C=(C_\ell)_{\ell=1}^{L}$ of symmetric positive-semidefinite matrices
$C_\ell\in\R^{n_\ell\times n_\ell}$, we define its supremum over the safe zone as
\begin{align*}
\Ssup\bigl(\overrightarrow C\bigr)\;=\;\sup_{g\in\Dom(\tau)}\;\sum_{\ell=1}^{L}
\tr\!\Bigl(D_\ell^{-1}C_\ell\,T^\ell(g)\Bigr).
\end{align*}
\end{definition}

\begin{remark}\label{rem:entrywise}
When $D_\ell=\diag(w^\ell_1,\dots,w^\ell_{n_\ell})$ is diagonal and $C_\ell$ is symmetric,
\[
\tr\!\Bigl(D_\ell^{-1}C_\ell\,T^\ell(g)\Bigr)
=\sum_{i,j}\frac{(C_\ell)_{ij}}{w^\ell_j}\;T^\ell(g)_{ij},
\]
the sum of the entries of the Schur product of $C_\ell$ and $T^\ell(g)$, with the $j$-th column
weighted by $1/w^\ell_j$. In particular
$\tr(D_\ell^{-1}C_\ell\,T^\ell(e))=\tr(D_\ell^{-1}C_\ell)=\sum_i (C_\ell)_{ii}/w^\ell_i$.
\end{remark}

\begin{theorem}\label{thm:dbBound}
Let $\overrightarrow C=(C_\ell)_{\ell=1}^{L}$ be a sequence of symmetric positive-semidefinite
matrices $C_\ell\in\R^{n_\ell\times n_\ell}$. If $\Ssup(\overrightarrow C)\le-1$, then
\begin{align}\label{eq:dbBound}
N_{\max}(\tau)\;\le\;1+\left\lfloor\sum_{\ell=1}^{L}\tr\bigl(D_\ell^{-1}C_\ell\bigr)\right\rfloor .
\end{align}
\end{theorem}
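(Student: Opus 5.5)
The argument is the standard Delsarte/Lovász double-counting. Let $X=\{g_1,\dots,g_N\}$ be a non-overlapping subset of $\SO(3)$ (Definition~\ref{def:nonOverlappingX}). Define
\[
F(g)=\sum_{\ell=1}^{L}\tr\!\bigl(D_\ell^{-1}C_\ell\,T^\ell(g)\bigr),
\]
and consider the double sum $\Sigma=\sum_{i,j}F(g_i^{-1}g_j)$. The bound follows from estimating $\Sigma$ from below (positivity) and from above (the diagonal plus the safe zone), then comparing.

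\textbf{Lower bound.} I claim $\Sigma\ge0$, because each summand $\ell$ contributes a nonnegative quantity. Lemma~\ref{lem:Torth} gives $T^\ell(g_i^{-1})=D_\ell^{-1}T^\ell(g_i)^\top D_\ell$, and multiplicativity then gives
\[
T^\ell(g_i^{-1}g_j)=D_\ell^{-1}T^\ell(g_i)^\top D_\ell\,T^\ell(g_j).
\]
Put $M_i=D_\ell T^\ell(g_i)$, so that $T^\ell(g_i^{-1}g_j)=D_\ell^{-1}M_i^\top M_j$ and the $\ell$-th term of $\Sigma$ becomes
\[
\sum_{i,j}\tr\bigl(D_\ell^{-1}C_\ell D_\ell^{-1}M_i^\top M_j\bigr)=\tr\bigl(K\,Y^\top Y\bigr),
\]
where $K=D_\ell^{-1}C_\ell D_\ell^{-1}$ and $Y=\sum_j M_j$. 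The matrix $K$ is positive semidefinite by congruence, since $C_\ell\succeq0$ and $D_\ell$ is symmetric. Hence $\tr(KY^\top Y)=\tr(YKY^\top)\ge0$. This algebraic rewriting, in particular getting the transposes and the $D_\ell$ factors into the right order, is the only step that needs care.

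\textbf{Upper bound.} The diagonal terms are $F(e)=\sum_\ell\tr(D_\ell^{-1}C_\ell)=:\Theta$, by $T^\ell(e)=I$. For $i\ne j$ we have $g_i^{-1}g_j\in\Dom(\tau)$, so $F(g_i^{-1}g_j)\le \Ssup(\overrightarrow C)\le-1$. Therefore
\[
0\le\Sigma\le N\Theta-N(N-1),
\]
which gives $N-1\le\Theta$. Since $N-1$ is an integer, $N\le 1+\lfloor\Theta\rfloor$.

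\textbf{Conclusion.} $N_{\max}(\tau)$ is the largest cardinality of a non-overlapping subset of $\SO(3)$, as established in Section~\ref{sec:reduction}, so the bound above applies to it and yields \eqref{eq:dbBound}. If $N=1$ the inequality is trivial, because $\Theta\ge0$ as the trace of a PSD-congruent matrix. I expect no real obstacle beyond the algebra in the lower bound.
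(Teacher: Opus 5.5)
Your argument is the same double-counting as the paper's proof, and your upper-bound and conclusion steps are fine. The factorization in the lower bound, however, is wrong as written, at exactly the step you flagged. With $M_i=D_\ell T^\ell(g_i)$ and $D_\ell$ symmetric you get $M_i^\top M_j=T^\ell(g_i)^\top D_\ell^{2}\,T^\ell(g_j)$, not $T^\ell(g_i)^\top D_\ell\,T^\ell(g_j)$. So the claimed identity $T^\ell(g_i^{-1}g_j)=D_\ell^{-1}M_i^\top M_j$ fails whenever $D_\ell\ne I$. For instance, at $g_i=g_j=e$ the right side is $D_\ell$ rather than $I$. This matters because the theorem allows an arbitrary positive-definite $D_\ell$. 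What your chain actually proves is $\tr\bigl(K\,S_\ell^\top D_\ell^{2}S_\ell\bigr)\ge0$ with $S_\ell=\sum_jT^\ell(g_j)$, which is not the $\ell$-th term of your double sum.

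The repair is immediate. Put $M_i=D_\ell^{1/2}T^\ell(g_i)$, using the positive square root of the positive-definite $D_\ell$ (entrywise when $D_\ell$ is diagonal). Then $M_i^\top M_j=T^\ell(g_i)^\top D_\ell\,T^\ell(g_j)$ and $Y=D_\ell^{1/2}S_\ell$, and the rest of your argument goes through verbatim. The paper avoids the square root altogether. It writes the $\ell$-th term directly as $\tr\bigl(K\,S_\ell^\top D_\ell S_\ell\bigr)$ and notes that both $K$ and $S_\ell^\top D_\ell S_\ell$ are positive semidefinite by congruence. It then uses $\tr(PQ)=\tr(\sqrt Q\,P\sqrt Q)\ge0$ for positive semidefinite $P,Q$. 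With either fix your proof coincides with the paper's.
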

\begin{proof}
Let $g_1,\dots,g_N$ be distinct elements of a non-overlapping set
(Definition~\ref{def:nonOverlappingX}), and put $S_\ell=\sum_{i=1}^{N}T^\ell(g_i)$. By
Lemma~\ref{lem:Torth},
\[
T^\ell\bigl(g_i^{-1}g_j\bigr)=T^\ell(g_i)^{-1}T^\ell(g_j)
=D_\ell^{-1}\,T^\ell(g_i)^\top D_\ell\,T^\ell(g_j),
\]
so that, summing over all ordered pairs and using linearity and cyclicity of the trace,
\begin{align*}
\sum_{i=1}^{N}\sum_{j=1}^{N}\tr\!\Bigl(D_\ell^{-1}C_\ell\,T^\ell\bigl(g_i^{-1}g_j\bigr)\Bigr)
&=\sum_{i,j}\tr\!\Bigl(D_\ell^{-1}C_\ell D_\ell^{-1}\,T^\ell(g_i)^\top D_\ell\,T^\ell(g_j)\Bigr)\\
&=\tr\!\Bigl(\bigl(D_\ell^{-1}C_\ell D_\ell^{-1}\bigr)\bigl(S_\ell^\top D_\ell S_\ell\bigr)\Bigr)
\;\ge\;0,
\end{align*}
since $D_\ell^{-1}C_\ell D_\ell^{-1}$ and $S_\ell^\top D_\ell S_\ell$ are congruent to $C_\ell$
and $D_\ell$, hence symmetric positive semidefinite, and for such $P,Q$ we have
$\tr(PQ)=\tr(\sqrt Q\,P\sqrt Q\,)\ge0$.

Summing over $\ell$ and separating the $N$ diagonal pairs $i=j$ (where
$T^\ell(e)=I$ gives the value $\sum_\ell\tr(D_\ell^{-1}C_\ell)$) from the $N(N-1)$ off-diagonal
pairs (where $g_i^{-1}g_j\in\Dom$ by Definition~\ref{def:nonOverlappingX}, so each term is at most
$\Ssup(\overrightarrow C)\le-1$),
\[
0\;\le\;N\sum_{\ell=1}^{L}\tr\bigl(D_\ell^{-1}C_\ell\bigr)\;-\;N(N-1) .
\]
Dividing by $N$ gives $N-1\le\sum_\ell\tr(D_\ell^{-1}C_\ell)$, which implies \eqref{eq:dbBound}.
\end{proof}

\begin{remark}\label{rem:complete}
In the deposited certificates $T^\ell_{\mathrm h}(q)$ is the matrix of the action of $R(q)$ on the
harmonic polynomials of degree $\ell$ on $\R^3$, in an integer orthogonal basis. The tables are
computed as in \cite{MRsix}. These actions are the irreducible representations of $\SO(3)$, one in
each dimension $2\ell+1$ \cite{BtD}. By Bochner's theorem \cite{Bochner}, in the form
\cite{SOV}, every continuous function of positive type on $\SO(3)$ has the form
$\sum_{\ell\ge0}\tr\bigl(F_\ell T^\ell(g)\bigr)$ with positive semidefinite matrices $F_\ell$. The
constant $-1$ in Theorem~\ref{thm:dbBound} plays the role of the term $\ell=0$. So the bound uses
all such functions of degree at most $L$. It is the Lov\'asz theta number \cite{Lovasz} of the
packing graph on $\SO(3)$, truncated at degree $L$.
\end{remark}

\section{Reduction to a polynomial problem}\label{sec:quat}
We will now find an equivalent problem in which $\SO(3)$ does not appear, and whose only objects
are polynomials in the four variables $q_0$, $q_1$, $q_2$, and $q_3$.

\subsection{Quaternions and rotations}\label{subs:quaternionsBasic}
For $q=(q_0,q_1,q_2,q_3)\in\R^4$ define
\begin{align*}
R(q)=\left[\begin{array}{ccc}
q_0^2+q_1^2-q_2^2-q_3^2 & 2(q_1q_2-q_0q_3) & 2(q_1q_3+q_0q_2)\\
2(q_1q_2+q_0q_3) & q_0^2-q_1^2+q_2^2-q_3^2 & 2(q_2q_3-q_0q_1)\\
2(q_1q_3-q_0q_2) & 2(q_2q_3+q_0q_1) & q_0^2-q_1^2-q_2^2+q_3^2
\end{array}\right],
\end{align*}
whose entries are integer quadratic forms in $q$, and write
$|q|^2=q_0^2+q_1^2+q_2^2+q_3^2$. Let $p*q$ denote the Hamilton product,
\begin{align*}
(p*q)_0&= p_0q_0-p_1q_1-p_2q_2-p_3q_3,\\
(p*q)_1&=p_0q_1+q_0p_1+p_2q_3-p_3q_2,\\
(p*q)_2&=p_0q_2+q_0p_2+p_3q_1-p_1q_3,\\
(p*q)_3&=p_0q_3+q_0p_3+p_1q_2-p_2q_1.
\end{align*}

\begin{lemma}\label{lem:quat}
The following hold as polynomial identities in $\Z[p,q]$:
\begin{enumerate}[leftmargin=2.2em,label=(\alph*)]
\item $R(q)^\top R(q)=|q|^4 I$ and $\det R(q)=|q|^6$. In particular, $R(q)\in\SO(3)$ whenever
$|q|=1$;
\item $R(p*q)=R(p)\,R(q)$ and $|p*q|^2=|p|^2|q|^2$;
\item the ten products $q_aq_b$ $(0\le a\le b\le3)$ are $\Z$-linear combinations of $|q|^2$ and the
entries $R_{ij}$ of $R(q)$. Explicitly,
$4q_0^2=|q|^2+R_{11}+R_{22}+R_{33}$, $4q_1^2=|q|^2+R_{11}-R_{22}-R_{33}$,
$4q_2^2=|q|^2-R_{11}+R_{22}-R_{33}$, $4q_3^2=|q|^2-R_{11}-R_{22}+R_{33}$,
$4q_0q_1=R_{32}-R_{23}$, $4q_0q_2=R_{13}-R_{31}$, $4q_0q_3=R_{21}-R_{12}$,
$4q_1q_2=R_{12}+R_{21}$, $4q_1q_3=R_{13}+R_{31}$, $4q_2q_3=R_{23}+R_{32}$.
\end{enumerate}
\end{lemma}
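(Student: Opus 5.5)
All three parts are identities between polynomials with integer coefficients, so the plan is to verify them by direct expansion, organised so that the bookkeeping stays small.

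For (c), I would simply read off the diagonal and off-diagonal entries of $R(q)$. Adding and subtracting the diagonal entries against $|q|^2=q_0^2+q_1^2+q_2^2+q_3^2$ isolates $4q_a^2$. For example,
\[
|q|^2+R_{11}+R_{22}+R_{33}=4q_0^2,
\]
since each $q_i^2$ with $i\ge1$ appears with signs $+1,+1,-1,-1$ that cancel. Taking sums and differences of symmetric off-diagonal pairs gives the cross terms. For instance, $R_{21}-R_{12}=2(q_1q_2+q_0q_3)-2(q_1q_2-q_0q_3)=4q_0q_3$ and $R_{12}+R_{21}=4q_1q_2$. The remaining identities are checked the same way, one line each.

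For (b), the norm identity $|p*q|^2=|p|^2|q|^2$ is Euler's four-square identity; it can be verified by expanding, since the cross terms cancel pairwise. For $R(p*q)=R(p)R(q)$, I would avoid expanding nine entries of degree four. Instead, identify $\R^3$ with pure quaternions $x=(0,x_1,x_2,x_3)$ and check that
\[
q*x*\bar q=(0,R(q)x),
\]
where $\bar q=(q_0,-q_1,-q_2,-q_3)$. This is a polynomial identity that is linear in $x$, so it suffices to check it on the three basis vectors $e_1,e_2,e_3$. Each check is a short computation of one column of $R(q)$. Associativity of the Hamilton product is itself a polynomial identity (a finite check, or classical), and together with $\overline{p*q}=\bar q*\bar p$ it gives
\[
(p*q)*x*\overline{p*q}=p*(q*x*\bar q)*\bar p.
\]
Reading both sides through the defining identity yields $R(p*q)x=R(p)R(q)x$ for every $x$, hence $R(p*q)=R(p)R(q)$ as polynomial matrices.

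For (a), the conjugation formula shows that $R(q)$ preserves the quaternion norm up to the factor $|q|^4$. By Euler's identity,
\[
|q*x*\bar q|^2=|q|^4|x|^2,
\]
so $\|R(q)x\|^2=|q|^4\|x\|^2$ for all $x$. Polarising this gives $R(q)^\top R(q)=|q|^4I$. Taking determinants then gives $\det R(q)^2=|q|^{12}$, so $\det R(q)=\pm|q|^6$ as polynomials. The sign is fixed by evaluating at $q=(1,0,0,0)$, where $R=I$; since $\det R(q)\mp|q|^6$ is a polynomial, one sign holds identically. When $|q|=1$, these identities say $R(q)$ is orthogonal with determinant $1$, so $R(q)\in\SO(3)$.

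The only real bookkeeping obstacle is the multiplicativity in (b). The conjugation trick above is what keeps it to a few column checks.
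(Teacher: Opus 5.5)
Your proof is correct. For (c) it is the same as the paper's, but for (a) and (b) it takes a more structural route. The paper simply says that every identity in the lemma is checked by expanding both sides, of degree at most $4$ in $(p,q)$ for (b) and at most $6$ in $q$ for (a).

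\textbf{What you do differently.} For (b) you avoid expanding the nine degree-four entries of $R(p*q)$. You use the conjugation identity $q*x*\bar q=(0,R(q)x)$, checked on the three basis vectors, together with associativity of $*$ and $\overline{p*q}=\bar q*\bar p$. For (a) you get $R(q)^\top R(q)=|q|^4I$ from Euler's four-square identity and polarization. Polarization is harmless here, since $\Z[q]$ has no $2$-torsion, or you can argue over $\R$. You then get $\det R(q)=|q|^6$ from $\det(R)^2=|q|^{12}$, the fact that $\Z[q]$ is an integral domain, and the evaluation at $q=(1,0,0,0)$. This avoids expanding a degree-six determinant.

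\textbf{What each approach buys.} Your route explains why $R$ is multiplicative and orthogonal, and it replaces two large expansions by short checks: three columns, associativity, the four-square identity, and a one-point evaluation. The cost is that associativity of the Hamilton product is itself a polynomial identity that must be verified or cited. So the computation is reorganized rather than eliminated. The paper's uniform ``expand and compare'' needs no algebraic structure at all. It also matches the paper's general approach, in which every identity is a coefficient comparison that a machine can carry out.
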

\begin{proof}
Each identity is an equality of polynomials with integer coefficients, of degree at most $6$ in
$q$ in (a) and (c) and at most $4$ in $p,q$ in (b), and is verified by expanding both sides.
\end{proof}

\begin{lemma}\label{lem:cover2}
\leavevmode
\begin{enumerate}[leftmargin=2.2em,label=(\alph*)]
\item If $q,q'$ are unit quaternions with $R(q)=R(q')$, then $q'=\pm q$.
\item Every $g\in\SO(3)$ equals $R(q)$ for some unit quaternion $q$.
\end{enumerate}
\end{lemma}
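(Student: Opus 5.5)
For part (a) I will use the explicit formulas of Lemma~\ref{lem:quat}(c). If $R(q)=R(q')$ and $|q|=|q'|=1$, the right-hand sides of those ten identities agree for $q$ and $q'$, because they depend only on $|q|^2=1$ and the entries of $R$. Hence $q_aq_b=q'_aq'_b$ for all $0\le a\le b\le 3$. Choose an index $a$ with $q_a\neq0$; it exists since $|q|=1$. From $q_a^2=q_a'^2$ we get $q'_a=\varepsilon q_a$ with $\varepsilon\in\{1,-1\}$. Then for every $b$, $q_aq_b=q'_aq'_b=\varepsilon q_a q'_b$. Dividing by $q_a$ gives $q'_b=\varepsilon q_b$, so $q'=\varepsilon q$. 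This part is purely algebraic.

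For part (b) I plan to show that the map $q\mapsto R(q)$ from the unit sphere $\Sph^3\subset\R^4$ to $\SO(3)$ is onto. By Lemma~\ref{lem:quat}(a) it lands in $\SO(3)$. The most direct route is to exhibit a preimage of a rotation by angle $\alpha$ about a unit axis $\overrightarrow{n}$. I take $q=(\cos\frac\alpha2,\sin\frac\alpha2\,\overrightarrow{n})$ and verify that $R(q)$ acts as that rotation. One way to see this is to rotate coordinates so that $\overrightarrow{n}=\overrightarrow{e_3}$. For $q=(\cos\frac\alpha2,0,0,\sin\frac\alpha2)$ the displayed matrix is
\[
\begin{bmatrix}\cos\alpha&-\sin\alpha&0\\ \sin\alpha&\cos\alpha&0\\0&0&1\end{bmatrix},
\]
by the double-angle formulas. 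The general axis then follows from conjugation. Every $g\in\SO(3)$ has a unit eigenvector with eigenvalue $1$, so $g=h\,R_{\overrightarrow{e_3}}(\alpha)\,h^{-1}$ for some $h\in\SO(3)$. This, however, requires $h$ to be in the image already, which is circular.

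To avoid the circularity I will give a self-contained construction of $q$ from $g$, reversing Lemma~\ref{lem:quat}(c). Set $t_a$ equal to the four quantities $1\pm g_{11}\pm g_{22}\pm g_{33}$ with the sign patterns of (c), each divided by $4$. These are the candidates for $q_a^2$. They are nonnegative and sum to $1$. Pick $a$ with $t_a>0$ maximal and set $q_a=\sqrt{t_a}$. Define the other $q_b$ by dividing the appropriate off-diagonal combination from (c) by $4q_a$. It then remains to check that $q_b^2=t_b$ and that $R(q)=g$. I expect this verification to be the main obstacle. It needs the relations $g^\top g=I$ and $\det g=1$, for example the fact that the cofactor of each entry equals the entry itself.

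For that reason I would rather use a cleaner topological argument. The image $R(\Sph^3)$ is compact, and it is a subgroup of $\SO(3)$ by Lemma~\ref{lem:quat}(b), since $R(\bar q)=R(q)^{-1}$. It contains the rotations about $\overrightarrow{e_1}$ and $\overrightarrow{e_3}$ by every angle, as the computation above shows for each coordinate axis. Every element of $\SO(3)$ is a product of three such rotations (Euler angles $R_{\overrightarrow{e_3}}R_{\overrightarrow{e_1}}R_{\overrightarrow{e_3}}$). Since the image is closed under products, it is all of $\SO(3)$. I will present this Euler-angle argument as the proof of (b).
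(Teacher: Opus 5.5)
Your part (a) is essentially the paper's argument: $R(q)$ and $|q|^2=1$ determine all products $q_aq_b$, and comparing against an index with $q_a\ne0$ gives $q'=\pm q$.

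For (b) you take a genuinely different route, and it is correct.

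The paper picks a unit vector $\overrightarrow{n}$ fixed by $g$ (from $\det(g-I)=0$) and notes that $g$ rotates $\overrightarrow{n}^\perp$ by some angle $\theta$. It then asserts by direct computation that $q=(\cos\frac\theta2,\sin\frac\theta2\,\overrightarrow{n})$ satisfies $R(q)=g$. That computation is done for an arbitrary axis, not by conjugating from $\overrightarrow{e_3}$. It amounts to the Rodrigues form $R(q)=\cos\theta\,I+(1-\cos\theta)\,\overrightarrow{n}\,\overrightarrow{n}^{\top}+\sin\theta\,[\overrightarrow{n}]_\times$, where $[\overrightarrow{n}]_\times$ is the cross-product matrix. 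So the circularity you worried about never arises there.

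You instead need only the trivial computations for quaternions with one nonzero imaginary coordinate. These give all rotations about $\overrightarrow{e_1}$ and $\overrightarrow{e_3}$. The multiplicativity of Lemma~\ref{lem:quat}(b) and the $ZXZ$ Euler-angle decomposition then do the rest. Only closure of the image under products is used, so compactness and $R(\bar q)=R(q)^{-1}$ can be dropped.

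The cost is that you quote the Euler-angle decomposition rather than prove it. To make it self-contained, choose $\alpha,\beta$ with $R_{\overrightarrow{e_3}}(\alpha)R_{\overrightarrow{e_1}}(\beta)\overrightarrow{e_3}=g\overrightarrow{e_3}$ (spherical coordinates). Then $\bigl(R_{\overrightarrow{e_3}}(\alpha)R_{\overrightarrow{e_1}}(\beta)\bigr)^{-1}g$ fixes $\overrightarrow{e_3}$, hence is a rotation about $\overrightarrow{e_3}$. This is the same planar-rotation fact the paper uses.

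The paper's route produces an explicit axis--angle preimage and needs one general-axis identity. Yours replaces that identity by the group structure and the coordinate-axis cases.
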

\begin{proof} (a) By Lemma~\ref{lem:quat}(c) the entries of $R(q)$ together with $|q|^2=1$ determine
every product $q_aq_b$, i.e.\ the symmetric rank-one matrix $qq^\top$. If $qq^\top=q'q'^\top$ then,
taking any index $a$ with $q_a\ne0$ (such an index exists because $|q|=1$), the $a$-th columns give
$q_a\,q=q'_a\,q'$ with $q_a^2=(q'_a)^2$, so $q'=\pm q$.

\noindent (b) First, every $g\in\SO(3)$ has an eigenvector corresponding to eigenvalue $1$. Indeed,
since $\det g=1$ and $g^\top g=I$,
\[
\det(g-I)=\det(g)\det(I-g^\top)=\det\bigl((I-g)^\top\bigr)=\det(I-g)=(-1)^3\det(g-I).
\]
Therefore, $\det(g-I)=0$ and there is a unit vector $\overrightarrow{n}$ with $g\overrightarrow{n}=\overrightarrow{n}$. Since $g$ is orthogonal and
fixes $\overrightarrow{n}$, it maps the plane $\overrightarrow{n}^\perp$ to itself, and its restriction there is an
orientation-preserving linear isometry of a plane, hence a rotation by some angle $\theta$. Let
$\overrightarrow{n}=(n_x,n_y,n_z)$ and set
\[q=\left(\cos\tfrac\theta2,\;n_x\sin\frac\theta2,\;n_y\sin\frac\theta2,\;n_z\sin\frac\theta2\right).\]
Clearly, $q$ is a unit quaternion. A direct computation from the definition of $R(q)$ shows
$R(q)\overrightarrow{n}=\overrightarrow{n}$ and that $R(q)$ rotates $\overrightarrow{n}^\perp$ by $\theta$ with the same orientation. Therefore,
$R(q)=g$.
\end{proof}

\subsection{The safe zone as a pair of polynomial inequalities}\label{subsec:safezoneQ}

By Lemma~\ref{lem:cover2} the map $q\mapsto R(q)$ carries the unit sphere of $\R^4$ onto $\SO(3)$,
two-to-one. We now describe the preimage of the safe zone $\Dom(\tau)$ of Definition~\ref{def:D} by
two polynomial inequalities in $q$.

\begin{lemma}\label{lem:nhNhHh}
Let $q\ne0$ and $g=|q|^{-2}R(q)$. Then
\begin{align}
\bigl\|\overrightarrow{e_1}\times g\overrightarrow{e_1}\bigr\|^{2}
&=\frac{4\bigl(q_0^{2}+q_1^{2}\bigr)\bigl(q_2^{2}+q_3^{2}\bigr)}{|q|^{4}}, \label{eq:NhExplicit}\\
\bigl(g\overrightarrow{e_3}-\overrightarrow{e_3}\bigr)\cdot\bigl(\overrightarrow{e_1}\times g\overrightarrow{e_1}\bigr)
&=\frac{-4\,q_1q_2}{|q|^{2}}, \label{eq:nhExplicit}\\
\bigl\|\bigl(g\overrightarrow{e_3}-\overrightarrow{e_3}\bigr)\times\overrightarrow{e_1}\bigr\|^{2}
&=\frac{4\Bigl[\bigl(q_2q_3-q_0q_1\bigr)^{2}+\bigl(q_1^{2}+q_2^{2}\bigr)^{2}\Bigr]}{|q|^{4}}.
\label{eq:HhExplicit}
\end{align}
\end{lemma}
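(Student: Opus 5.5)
The plan is a direct computation from the explicit matrix $R(q)$. By Lemma~\ref{lem:quat}(a) the matrix $g=|q|^{-2}R(q)$ lies in $\SO(3)$, and $g\overrightarrow{e_1}$, $g\overrightarrow{e_3}$ are $|q|^{-2}$ times the first and third columns of $R(q)$. Write these columns as
\[
c_1=\bigl(q_0^2+q_1^2-q_2^2-q_3^2,\ 2(q_1q_2+q_0q_3),\ 2(q_1q_3-q_0q_2)\bigr),\qquad
c_3=\bigl(2(q_1q_3+q_0q_2),\ 2(q_2q_3-q_0q_1),\ q_0^2-q_1^2-q_2^2+q_3^2\bigr).
\]
We first compute $\overrightarrow{e_1}\times c_1=\bigl(0,\,-2(q_1q_3-q_0q_2),\,2(q_1q_2+q_0q_3)\bigr)$.

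For \eqref{eq:NhExplicit}, the squared norm of this vector is $4\bigl[(q_1q_3-q_0q_2)^2+(q_1q_2+q_0q_3)^2\bigr]$. The cross terms $\mp 2q_0q_1q_2q_3$ cancel, which leaves $4(q_0^2+q_1^2)(q_2^2+q_3^2)$. Dividing by $|q|^4$ gives the claim. A cleaner check is that this equals $1-(g\overrightarrow{e_1}\cdot\overrightarrow{e_1})^2$, which factors as $(|q|^2-R_{11})(|q|^2+R_{11})/|q|^4$ with $|q|^2\pm R_{11}$ equal to $2(q_0^2+q_1^2)$ and $2(q_2^2+q_3^2)$. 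I would record both routes.

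For \eqref{eq:nhExplicit}, the dot product of $c_3-|q|^2\overrightarrow{e_3}$ with $\overrightarrow{e_1}\times c_1$ only involves the second and third coordinates, since the first coordinate of the cross product vanishes. The third coordinate of $c_3-|q|^2\overrightarrow{e_3}$ is $-2(q_1^2+q_2^2)$. So we must simplify
\[
-4(q_2q_3-q_0q_1)(q_1q_3-q_0q_2)-4(q_1^2+q_2^2)(q_1q_2+q_0q_3).
\]
Expanding the first product gives $-4\bigl(q_1q_2q_3^2-q_0q_2^2q_3-q_0q_1^2q_3+q_0^2q_1q_2\bigr)$. Adding the second, the $q_0q_3$ terms cancel and we are left with $-4q_1q_2(q_0^2+q_1^2+q_2^2+q_3^2)=-4q_1q_2|q|^2$. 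The overall normalization is $|q|^{-2}\cdot|q|^{-2}$, which turns this into $-4q_1q_2/|q|^2$. This expansion is the step most prone to sign slips, so I would double-check it by specializing to $q=(1,1,1,0)$ and comparing with a direct evaluation.

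For \eqref{eq:HhExplicit}, the cross product with $\overrightarrow{e_1}$ discards the first coordinate. For $x=(x_1,x_2,x_3)$ we have $\|x\times\overrightarrow{e_1}\|^2=x_2^2+x_3^2$. With $x=c_3-|q|^2\overrightarrow{e_3}$ this is $4(q_2q_3-q_0q_1)^2+4(q_1^2+q_2^2)^2$, and dividing by $|q|^4$ finishes the proof.

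None of the three steps needs an earlier result beyond the formula for $R(q)$. The only obstacle is bookkeeping in the degree-four expansion of the second identity, and the cancellation that produces the factor $|q|^2$ there is the thing to verify carefully.
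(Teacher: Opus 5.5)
Your proposal is correct and is essentially the paper's proof: it reads $g\overrightarrow{e_1}$ and $g\overrightarrow{e_3}$ off the columns of $R(q)$, forms the same vectors, and gets the first and third identities directly, while in the second identity the $q_0q_3$ terms cancel and leave the factor $|q|^2$. Your alternative check of the first identity via $1-(g\overrightarrow{e_1}\cdot\overrightarrow{e_1})^2=(|q|^2-R_{11})(|q|^2+R_{11})/|q|^4$ is also valid, because $g\overrightarrow{e_1}$ is a unit vector.
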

\begin{proof}
Write $R_{ij}$ for the entries of $R(q)$.
The vectors $g\overrightarrow{e_1}$ and $g\overrightarrow{e_3}$ are the first and the third column of $g$.
Hence
\begin{align*}
\overrightarrow{e_1}\times g\overrightarrow{e_1}&=|q|^{-2}\bigl(0,\,-R_{31},\,R_{21}\bigr),\\
g\overrightarrow{e_3}-\overrightarrow{e_3}&=|q|^{-2}\bigl(R_{13},\,R_{23},\,R_{33}-|q|^{2}\bigr),
\end{align*}
and $\bigl(g\overrightarrow{e_3}-\overrightarrow{e_3}\bigr)\times\overrightarrow{e_1}=|q|^{-2}\bigl(0,\,R_{33}-|q|^{2},\,-R_{23}\bigr)$.
From the definition of $R(q)$ in Section~\ref{subs:quaternionsBasic},
\begin{align*}
R_{21}&=2(q_1q_2+q_0q_3), & R_{23}&=2(q_2q_3-q_0q_1),\\
R_{31}&=2(q_1q_3-q_0q_2), & R_{33}-|q|^{2}&=-2\bigl(q_1^{2}+q_2^{2}\bigr).
\end{align*}
Substituting these gives \eqref{eq:NhExplicit} and \eqref{eq:HhExplicit} at once.
In \eqref{eq:NhExplicit} the cross terms $\pm8q_0q_1q_2q_3$ in $R_{21}^{2}+R_{31}^{2}$ cancel.
For \eqref{eq:nhExplicit}, the left-hand side is $|q|^{-4}\bigl(-R_{23}R_{31}+(R_{33}-|q|^{2})R_{21}\bigr)$, and
\begin{align*}
&-R_{23}R_{31}+\bigl(R_{33}-|q|^{2}\bigr)R_{21}\\
&\quad=-4\bigl(q_2q_3-q_0q_1\bigr)\bigl(q_1q_3-q_0q_2\bigr)-4\bigl(q_1^{2}+q_2^{2}\bigr)\bigl(q_1q_2+q_0q_3\bigr)\\
&\quad=-4\bigl(q_1q_2q_3^{2}-q_0q_2^{2}q_3-q_0q_1^{2}q_3+q_0^{2}q_1q_2\bigr)\\
&\qquad\ -4\bigl(q_1^{3}q_2+q_0q_1^{2}q_3+q_1q_2^{3}+q_0q_2^{2}q_3\bigr)\\
&\quad=-4q_1q_2\bigl(q_0^{2}+q_1^{2}+q_2^{2}+q_3^{2}\bigr).
\end{align*}
The four terms carrying $q_0q_3$ cancel in pairs.
\end{proof}

Write $\tau^{2}=A/B$ with $A,B$ positive coprime integers and set
\begin{align}
Q_{\mathrm h}(q)&=4B\,q_1^{2}q_2^{2}-A\bigl(q_0^{2}+q_1^{2}\bigr)\bigl(q_2^{2}+q_3^{2}\bigr),
\label{eq:Qh}\\
U_{\mathrm h}(q)&=4B\Bigl[\bigl(q_2q_3-q_0q_1\bigr)^{2}+\bigl(q_1^{2}+q_2^{2}\bigr)^{2}\Bigr]
 -A\,\bigl(|q|^{2}\bigr)^{2}. \label{eq:Uh}
\end{align}
These are homogeneous of degree $4$ with integer coefficients, and both are invariant under
$q\mapsto-q$.

\begin{theorem}\label{thm:domainQ}
For every $q\ne0$,
\begin{align}\label{eq:Kdef}
|q|^{-2}R(q)\in\Dom(\tau)
\quad\Longleftrightarrow\quad
Q_{\mathrm h}(q)\ge0\ \text{ and }\ U_{\mathrm h}(q)\ge0 .
\end{align}
\end{theorem}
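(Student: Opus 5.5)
The plan is to compute $\dist(\ell_0,g\cdot\ell_0)$ directly with Lemma~\ref{lem:linedist} and then translate the result into $q$ using Lemma~\ref{lem:nhNhHh}. Put $g=|q|^{-2}R(q)$. By Lemma~\ref{lem:quat}(a) this is a rotation, so $g\cdot\ell_0$ is the line through $g\overrightarrow{e_3}$ with direction $g\overrightarrow{e_1}$. Introduce the abbreviations
\[
N=\|\overrightarrow{e_1}\times g\overrightarrow{e_1}\|^2,\qquad n=(g\overrightarrow{e_3}-\overrightarrow{e_3})\cdot(\overrightarrow{e_1}\times g\overrightarrow{e_1}),\qquad H=\|(g\overrightarrow{e_3}-\overrightarrow{e_3})\times\overrightarrow{e_1}\|^2 .
\]
Lemma~\ref{lem:nhNhHh} gives all three explicitly in $q$. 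Multiplying through by $|q|^4/4>0$ and using $\tau^2=A/B$ with $B>0$, one gets two equivalences:
\[
n^2\ge\tau^2N\iff Q_{\mathrm h}(q)\ge0,\qquad H\ge\tau^2\iff U_{\mathrm h}(q)\ge0 .
\]
The first is $16B\,q_1^2q_2^2\ge4A(q_0^2+q_1^2)(q_2^2+q_3^2)$ divided by $4$; the second is immediate from \eqref{eq:HhExplicit}.

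Next I would observe that $H$ is the squared distance from the point $g\overrightarrow{e_3}$ to the line $\ell_0$. This is the parallel-case formula of Lemma~\ref{lem:linedist} applied with $\|\overrightarrow{e_1}\|=1$. Since $g\overrightarrow{e_3}$ lies on $g\cdot\ell_0$, this gives $H\ge\dist(\ell_0,g\cdot\ell_0)^2$ in every case.

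The rest splits into two cases according to whether the directions are parallel.

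\emph{Non-parallel case, $N>0$.} Here Lemma~\ref{lem:linedist} gives $\dist^2=n^2/N$, so $g\in\Dom(\tau)$ if and only if $n^2\ge\tau^2N$, that is, $Q_{\mathrm h}\ge0$. If $g\in\Dom(\tau)$, then the observation above gives $H\ge\dist^2\ge\tau^2$, so $U_{\mathrm h}\ge0$ as well. Conversely, $Q_{\mathrm h}\ge0$ alone already forces $g\in\Dom(\tau)$.

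\emph{Parallel case, $N=0$.} Here $\dist^2=H$, so $g\in\Dom(\tau)$ if and only if $U_{\mathrm h}\ge0$. It remains to check that $Q_{\mathrm h}\ge0$ holds automatically. Since $\overrightarrow{e_1}\times g\overrightarrow{e_1}=0$, we get $n=0$, and by \eqref{eq:nhExplicit} this means $q_1q_2=0$. The condition $N=0$ means $(q_0^2+q_1^2)(q_2^2+q_3^2)=0$ by \eqref{eq:NhExplicit}. Substituting both into \eqref{eq:Qh} gives $Q_{\mathrm h}(q)=0\ge0$.

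The only delicate point is the parallel case. There the defining formula of $\Dom$ switches branches, and one must confirm that $Q_{\mathrm h}$ does not wrongly exclude anything. This is handled by the vanishing of both terms of $Q_{\mathrm h}$ noted above. Everything else is the bookkeeping of positive factors $|q|^4$ and $B$.
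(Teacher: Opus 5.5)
Your proposal is correct and follows the paper's proof essentially step for step. You use the same bound $\dist^2\le H$, coming from $g\overrightarrow{e_3}\in g\cdot\ell_0$, and the same case split. In the non-parallel case, $Q_{\mathrm h}\ge0$ is exactly the membership condition and forces $U_{\mathrm h}\ge0$. In the parallel case, $Q_{\mathrm h}$ vanishes and $U_{\mathrm h}\ge0$ is exactly the condition. The only cosmetic difference is how you obtain $q_1q_2=0$: you get it from $n=0$ via \eqref{eq:nhExplicit}, whereas the paper reads it off from $q_0=q_1=0$ or $q_2=q_3=0$.
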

\begin{proof}
Let $g=|q|^{-2}R(q)$ and $\delta=\dist(\ell_0,g\cdot\ell_0)$.
The line $g\cdot\ell_0$ passes through $g\overrightarrow{e_3}$ with direction $g\overrightarrow{e_1}$.
Let $d$ be the distance from the point $g\overrightarrow{e_3}$ to the line $\ell_0$.
Since $\overrightarrow{e_1}$ is a unit vector, $d=\bigl\|\bigl(g\overrightarrow{e_3}-\overrightarrow{e_3}\bigr)\times\overrightarrow{e_1}\bigr\|$.
By \eqref{eq:HhExplicit} and \eqref{eq:Uh}, $U_{\mathrm h}(q)=B|q|^{4}\bigl(d^{2}-\tau^{2}\bigr)$.
So $U_{\mathrm h}(q)\ge0$ holds exactly when $d\ge\tau$.
Also $\delta\le d$, because $g\overrightarrow{e_3}$ lies on $g\cdot\ell_0$.

\emph{Case $\bigl(q_0^{2}+q_1^{2}\bigr)\bigl(q_2^{2}+q_3^{2}\bigr)>0$.}
By \eqref{eq:NhExplicit}, $\overrightarrow{e_1}\times g\overrightarrow{e_1}\ne0$, so the lines are not parallel.
Lemma~\ref{lem:linedist} with $\overrightarrow{p_1}=\overrightarrow{e_3}$, $\overrightarrow{v_1}=\overrightarrow{e_1}$,
$\overrightarrow{p_2}=g\overrightarrow{e_3}$ and $\overrightarrow{v_2}=g\overrightarrow{e_1}$, together with
\eqref{eq:NhExplicit} and \eqref{eq:nhExplicit}, gives
\[
\delta^{2}=\frac{4\,q_1^{2}q_2^{2}}{\bigl(q_0^{2}+q_1^{2}\bigr)\bigl(q_2^{2}+q_3^{2}\bigr)} .
\]
Hence $Q_{\mathrm h}(q)=B\bigl(q_0^{2}+q_1^{2}\bigr)\bigl(q_2^{2}+q_3^{2}\bigr)\bigl(\delta^{2}-\tau^{2}\bigr)$.
So $Q_{\mathrm h}(q)\ge0$ holds exactly when $\delta\ge\tau$.
In that case $d\ge\delta\ge\tau$, so $U_{\mathrm h}(q)\ge0$ automatically.

\emph{Case $\bigl(q_0^{2}+q_1^{2}\bigr)\bigl(q_2^{2}+q_3^{2}\bigr)=0$.}
Then $q_0=q_1=0$ or $q_2=q_3=0$. In both cases $q_1q_2=0$, so $Q_{\mathrm h}(q)=0$.
By \eqref{eq:NhExplicit} the lines are parallel.
So $\delta=d$ by Lemma~\ref{lem:linedist}, and $\delta\ge\tau$ holds exactly when $U_{\mathrm h}(q)\ge0$.
\end{proof}

We write
\[
\Kdom(\tau)=\bigl\{q\in\R^{4}\setminus\{0\}:\ Q_{\mathrm h}(q)\ge0,\ U_{\mathrm h}(q)\ge0\bigr\},
\]
a cone that is invariant under $q\mapsto\lambda q$ for every $\lambda\ne0$. By
Lemma~\ref{lem:cover2}, $\Kdom(\tau)$ is the full preimage of $\Dom(\tau)$ under
$q\mapsto|q|^{-2}R(q)$.

\section{The certificate theorem}\label{sec:certthm}

We can now state the general result. Hypotheses \ref{H:hom}--\ref{H:trace} below are finite
assertions about rational numbers and polynomials with rational coefficients, verified by
inspection, expansion, or an explicit positivity witness. Hypothesis \ref{H:cover} is an
inequality on a semialgebraic set. Section~\ref{sec:cover} proves it by a sum-of-squares
identity.

\begin{definition}\label{def:certificate}
Let $\tau\in(0,2)$ with $\tau^{2}\in\Q$, let $L\in\mathbb N$, and let $T^{*}\in\Q$. A
\emph{certificate at $(\tau,L,T^{*})$} consists of, for each $\ell=1,\dots,L$:
\begin{itemize}
\item a diagonal matrix $D_\ell=\diag(w^\ell_1,\dots,w^\ell_{2\ell+1})$ with $w^\ell_i\in\Q_{>0}$,
\item a symmetric matrix $C_\ell\in\Q^{(2\ell+1)\times(2\ell+1)}$, and
\item a matrix $T^{\ell}_{\mathrm h}$ of polynomials in $\Q[q_0,q_1,q_2,q_3]$ of size
$(2\ell+1)\times(2\ell+1)$,
\end{itemize}
subject to the following hypotheses. Set
\begin{align}
P(q)&=\sum_{\ell=1}^{L}\bigl(|q|^{2}\bigr)^{L-\ell}
\sum_{i,j=1}^{2\ell+1}\frac{(C_\ell)_{ij}\,(T^{\ell}_{\mathrm h})_{ij}(q)}{w^{\ell}_j},
\label{eq:P}\\
G(q)&=-\bigl(|q|^{2}\bigr)^{L}-P(q). \label{eq:G}
\end{align}
\begin{enumerate}[leftmargin=3.2em,label=\textup{(H\arabic*)}]
\item\label{H:hom} \emph{(Homogeneity.)} Every entry of $T^{\ell}_{\mathrm h}$ is a homogeneous
polynomial of degree $2\ell$.
\item\label{H:mult} \emph{(Multiplicativity.)}
$T^{\ell}_{\mathrm h}(p*q)=T^{\ell}_{\mathrm h}(p)\,T^{\ell}_{\mathrm h}(q)$ as an identity in
$\Q[p,q]$.
\item\label{H:orth} \emph{(Orthogonality.)}
$T^{\ell}_{\mathrm h}(q)^{\top}D_\ell\,T^{\ell}_{\mathrm h}(q)=\bigl(|q|^{2}\bigr)^{2\ell}D_\ell$ as
an identity in $\Q[q]$.
\item\label{H:psd} \emph{(Positive semidefiniteness.)} $C_\ell$ is positive semidefinite.
\item\label{H:trace} \emph{(Trace.)} $\sum_{\ell=1}^{L}\tr\bigl(D_\ell^{-1}C_\ell\bigr)=T^{*}$.
\item\label{H:cover} \emph{(Cover.)} $G(q)\ge0$ for every $q\in\Kdom(\tau)$.
\end{enumerate}
\end{definition}

\begin{theorem}\label{thm:certificate}
If a certificate at $(\tau,L,T^{*})$ exists, then
\[
N_{\max}(\tau)\;\le\;1+\left\lfloor T^{*}\right\rfloor .
\]
\end{theorem}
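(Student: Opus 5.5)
The plan is to reduce Theorem~\ref{thm:certificate} to Theorem~\ref{thm:dbBound}. To do this we build representations $T^\ell$ of $\SO(3)$ from the polynomial matrices $T^\ell_{\mathrm h}$ and then check the hypotheses of Theorem~\ref{thm:dbBound}. For $g\in\SO(3)$, Lemma~\ref{lem:cover2}(b) gives a unit quaternion $q$ with $g=R(q)$, and we set $T^\ell(g)=T^\ell_{\mathrm h}(q)$.

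The first step is well-definedness. By Lemma~\ref{lem:cover2}(a) the only other choice is $-q$. By \ref{H:hom} each entry of $T^\ell_{\mathrm h}$ is homogeneous of even degree $2\ell$, so $T^\ell_{\mathrm h}(-q)=T^\ell_{\mathrm h}(q)$.

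Next comes the representation property. If $g_1=R(p)$ and $g_2=R(q)$ with $|p|=|q|=1$, then $g_1g_2=R(p*q)$ by Lemma~\ref{lem:quat}(b), and $p*q$ is again a unit quaternion. Hence \ref{H:mult} gives $T^\ell(g_1g_2)=T^\ell(g_1)T^\ell(g_2)$. Evaluating \ref{H:orth} at $|q|=1$ gives $T^\ell(g)^\top D_\ell T^\ell(g)=D_\ell$, and $D_\ell$ is diagonal with positive rational entries, hence positive definite. So (i)--(ii) of Section~\ref{sec:DBSequence} hold. By \ref{H:psd} the $C_\ell$ are positive semidefinite, and by \ref{H:trace} the right side of \eqref{eq:dbBound} equals $1+\lfloor T^*\rfloor$.

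It remains to check $\Ssup(\overrightarrow C)\le-1$. Take $g\in\Dom(\tau)$ and write $g=R(q)$ with $|q|=1$. Then $|q|^{-2}R(q)=g\in\Dom(\tau)$, so Theorem~\ref{thm:domainQ} gives $q\in\Kdom(\tau)$. Since $|q|=1$, every factor $(|q|^2)^{L-\ell}$ in \eqref{eq:P} equals $1$. By Remark~\ref{rem:entrywise}, $P(q)=\sum_\ell\tr(D_\ell^{-1}C_\ell T^\ell(g))$. Hypothesis \ref{H:cover} gives $G(q)=-1-P(q)\ge0$, so every term in the supremum is at most $-1$. Theorem~\ref{thm:dbBound} then yields the claim.

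The main point needing care is the well-definedness and group-homomorphism step. One must pass from polynomial identities on all of $\R^4$ to functions on $\SO(3)$ through the double cover, and evenness of degree is exactly what makes this work. The remaining steps are direct substitutions.
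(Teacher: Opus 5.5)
Your proposal is correct and follows the paper's proof essentially step for step. Like the paper, you descend $T^\ell_{\mathrm h}$ through the double cover using the even degree from \ref{H:hom}, check (i)--(ii) from \ref{H:mult}--\ref{H:orth}, derive $\Ssup(\overrightarrow C)\le-1$ from \ref{H:cover}, and then invoke Theorem~\ref{thm:dbBound}. The one small difference is that you represent $g\in\Dom(\tau)$ directly by a unit quaternion, which makes the paper's homogeneity extension \eqref{eq:PisF} to arbitrary $q\ne0$ unnecessary.
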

\begin{proof}
\emph{Step 1: $T^{\ell}_{\mathrm h}$ descends to a map on $\SO(3)$.} For $g\in\SO(3)$ choose, by
Lemma~\ref{lem:cover2}(b), a unit quaternion $q$ with $R(q)=g$, and set
$T^{\ell}(g)=T^{\ell}_{\mathrm h}(q)$. This is well defined: by Lemma~\ref{lem:cover2}(a) the only
other choice is $-q$, and by \ref{H:hom} the entries of $T^{\ell}_{\mathrm h}$ are homogeneous of the
even degree $2\ell$, so $T^{\ell}_{\mathrm h}(-q)=T^{\ell}_{\mathrm h}(q)$.

\emph{Step 2: the hypotheses of Section~\ref{sec:DBSequence}.} Each $D_\ell$ is diagonal with
positive entries, hence symmetric positive definite. Let $g_1,g_2\in\SO(3)$ and pick unit
quaternions $q^{(1)},q^{(2)}$ with $R(q^{(i)})=g_i$. By Lemma~\ref{lem:quat}(b),
$R(q^{(1)}*q^{(2)})=g_1g_2$ and $|q^{(1)}*q^{(2)}|=1$, so \ref{H:mult} gives
\begin{align*}
T^{\ell}(g_1g_2)=T^{\ell}_{\mathrm h}\bigl(q^{(1)}*q^{(2)}\bigr)
=T^{\ell}_{\mathrm h}(q^{(1)})\,T^{\ell}_{\mathrm h}(q^{(2)})=T^{\ell}(g_1)T^{\ell}(g_2),
\end{align*}
and \ref{H:orth} at a unit quaternion gives
$T^{\ell}(g)^{\top}D_\ell T^{\ell}(g)=D_\ell$. Thus the data $(D_\ell,T^{\ell})_{\ell=1}^{L}$ satisfy
(i)--(ii) of Section~\ref{sec:DBSequence}, and by \ref{H:psd} the sequence
$\overrightarrow C=(C_\ell)_{\ell=1}^{L}$ is admissible in Definition~\ref{def:DB}.

\emph{Step 3: $P$ computes the two-point sum.} For a unit quaternion $q$ we have
$|q|^{2}=1$, so by \eqref{eq:P} and Remark~\ref{rem:entrywise},
\begin{align*}
P(q)=\sum_{\ell=1}^{L}\sum_{i,j}\frac{(C_\ell)_{ij}(T^{\ell}_{\mathrm h})_{ij}(q)}{w^{\ell}_j}
=\sum_{\ell=1}^{L}\tr\!\Bigl(D_\ell^{-1}C_\ell\,T^{\ell}\bigl(R(q)\bigr)\Bigr).
\end{align*}
By \ref{H:hom} the $\ell$-th summand of \eqref{eq:P} is homogeneous of degree
$2(L-\ell)+2\ell=2L$, so $P$ is homogeneous of degree $2L$. Hence for arbitrary $q\ne0$,
\begin{align}\label{eq:PisF}
\frac{P(q)}{\bigl(|q|^{2}\bigr)^{L}}
=\sum_{\ell=1}^{L}\tr\!\Bigl(D_\ell^{-1}C_\ell\,T^{\ell}\bigl(|q|^{-2}R(q)\bigr)\Bigr).
\end{align}

\emph{Step 4: \ref{H:cover} is exactly $\Ssup(\overrightarrow C)\le-1$.} Let $g\in\Dom(\tau)$ and
pick $q\ne0$ with $g=|q|^{-2}R(q)$. By \eqref{eq:Kdef} we have $q\in\Kdom(\tau)$, so
\ref{H:cover} gives $G(q)\ge0$, i.e.\ $P(q)\le-(|q|^{2})^{L}$. Dividing by $(|q|^{2})^{L}>0$ and
using \eqref{eq:PisF},
\[
\sum_{\ell=1}^{L}\tr\!\Bigl(D_\ell^{-1}C_\ell\,T^{\ell}(g)\Bigr)\;\le\;-1 .
\]
Taking the supremum over $g\in\Dom(\tau)$ yields $\Ssup(\overrightarrow C)\le-1$.

\emph{Step 5: conclusion.} By \ref{H:trace} and Theorem~\ref{thm:dbBound},
\begin{align*}
N_{\max}(\tau)\le1+\left\lfloor\sum_{\ell}\tr(D_\ell^{-1}C_\ell)\right\rfloor
=1+\left\lfloor T^{*}\right\rfloor. \qedhere
\end{align*}
\end{proof}

In the language of Definition~\ref{def:Pcyl}, the conclusion of Theorem~\ref{thm:certificate}
reads $\bigl(1+\lfloor T^{*}\rfloor,\ \tau/(2-\tau)\bigr)\in\Pcyl$.

\section{No insertion for $3\le n\le7$: proof of Theorem~\ref{thm:A}}\label{sec:applications}

The four certificates of Table~\ref{tab:certs} prove the cases $4\le n\le7$ of
Theorem~\ref{thm:A}. They share the matrices $D_\ell$ and $T^\ell_{\mathrm h}$ of
Section~\ref{sec:cert}. They differ in $\tau^{2}$, $L$, the matrices $C_\ell$, the number $T^{*}$,
and the sum-of-squares data of Section~\ref{sec:cover}.

\begin{table}[t]
\begin{center}
\begin{tabular}{lcccc}
certificate & $\tau$ & $\omega=\frac{\tau}{2-\tau}$ & $L$ & proves\\
\hline
\texttt{cyl\_N4\_omega39\_25} & $\frac{39}{32}$ & $\frac{39}{25}$ & $4$ &  $(4,\frac{39}{25})\in\Pcyl$\\[2pt]
\texttt{cyl\_N5\_omega1203\_845} & $\frac{1203}{1024}$ & $\frac{1203}{845}$ & $4$ & $(5,\frac{1203}{845})\in\Pcyl$\\[2pt]
\texttt{cyl\_N6\_omega1\_1} & $1$ & $1$ & $4$ &  $(6,1)\in\Pcyl$\\[2pt]
\texttt{cyl\_N7\_omega14558721\_18995711} & $\frac{14558721}{2^{24}}$ & $\frac{14558721}{18995711}$ & $6$ & $(7,\frac{14558721}{18995711})\in\Pcyl$\\[2pt]
\end{tabular}
\end{center}
\caption{The deposited certificates. Each file is named after the statement it proves.}
\label{tab:certs}
\end{table}

By Lemma~\ref{lem:Pcylmono} it suffices to prove $(n,\omega')\in\Pcyl$ for some rational
$\omega'\le\omega_n$. The values of $\omega'$ in Table~\ref{tab:certs} are simply the ones our search
produced.

\begin{proof}[Proof of Theorem \ref{thm:A}]
\emph{$n=3$.} We have $\omega_3=3+2\sqrt3>1+\sqrt2$. Theorem~\ref{thm:OS} gives
$(3,\omega_3)\in\Pcyl$.

\emph{$n=4$.} The certificate \texttt{cyl\_N4\_omega39\_25}, at $\tau=\frac{39}{32}$, gives
$N_{\max}\bigl(\frac{39}{32}\bigr)\le4$ by Theorem~\ref{thm:certificate}. So
$(4,\frac{39}{25})\in\Pcyl$. Also $\frac{39}{25}\le1+\sqrt2=\omega_4$, since
$14^2=196\le1250=2\cdot25^2$.

\emph{$n=5$.} The certificate \texttt{cyl\_N5\_omega1203\_845}, at $\tau=\frac{1203}{1024}$, gives
$N_{\max}\bigl(\frac{1203}{1024}\bigr)\le5$. So $(5,\frac{1203}{845})\in\Pcyl$. It remains to check
$\omega_5\ge\frac{1203}{845}$, that is, $\sin\frac\pi5\ge\frac{1203}{2048}$. Squaring and using
$16\sin^2\frac\pi5=10-2\sqrt5$, this is equivalent in turn to
\[
10-2\sqrt5\ge\frac{1447209}{262144}
\iff 1174231\ge 524288\sqrt5
\iff 1174231^2\ge5\cdot524288^2.
\]
Indeed $1378818441361\ge1374389534720$.

\emph{$n=6$.} The certificate \texttt{cyl\_N6\_omega1\_1} gives $N_{\max}(1)\le6$. This was first
proved in \cite{MRsix}. Since $2\sin\frac\pi6=1$ corresponds to $\omega=\omega_6=1$, we get
$(6,\omega_6)\in\Pcyl$.

\emph{$n=7$.} The certificate \texttt{cyl\_N7\_omega14558721\_18995711}, at
$\tau=\frac{14558721}{2^{24}}$, gives $N_{\max}(\tau)\le7$. So $(7,\frac{\tau}{2-\tau})\in\Pcyl$. It
remains to check $\omega_7\ge\frac{\tau}{2-\tau}$, that is, $\sin\frac\pi7\ge\frac\tau2$. We have
$\sin^2\frac\pi7=\frac12(1-c)$, where $c=\cos\frac{2\pi}7$ is the largest root of
$p(x)=8x^3+4x^2-4x-1$. So the inequality is equivalent to $c\le r$, where $r=1-\frac{\tau^2}2$. The
polynomial $p$ is increasing to the right of its largest root, and $r>0.62$ exceeds the other two
roots. Hence the inequality is equivalent to $p(r)\ge0$. With
$\tau^2=\frac{211956357155841}{2^{48}}$, the inequality $p(r)>0$ is a comparison of two integers,
which holds.
\end{proof}

\begin{remark}\label{rem:firsching}
The case $n=4$ of Theorem~\ref{thm:A} was proved earlier. Ogievetsky and Shlosman
\cite[Section~6.3]{OSsix} proved it with an argument about inscribed balls. Firsching \cite{F},
building on Bra\ss{} and Wenk \cite{BW}, gave a stronger bound.
\end{remark}

\section{The certificate data}\label{sec:cert}
This section describes the deposited data and the checks that establish
\ref{H:hom}--\ref{H:trace}. How the data was found plays no role here. The \emph{universal} part, shared by all certificates, consists, for
each $\ell=1,\dots,6$, of:
\begin{itemize}[leftmargin=1.4em]
\item positive rationals $w^\ell_1,\dots,w^\ell_{2\ell+1}$, forming the diagonal matrix
$D_\ell=\diag(w^\ell_1,\dots,w^\ell_{2\ell+1})$;
\item an integer table defining a $(2\ell+1)\times(2\ell+1)$ matrix $T^\ell_{\mathrm h}(q)$ of
polynomials in $q=(q_0,q_1,q_2,q_3)$, with coefficients in
$\tfrac1{\mathrm{Tden}_\ell}\Z$ for a deposited scale $\mathrm{Tden}_\ell\in\{1,2,4\}$
(the deposited values are $1,1,2,1,4,2$ for $\ell=1,\dots,6$).
\end{itemize}
A certificate with truncation $L$ uses the first $L$ universal blocks. The deposited certificates
have $L=4$ or $L=6$.
Each certificate additionally deposits the rational $\tau^2$, the number $T^*$ with a scale $M$
(Appendix~\ref{app:data}), and
symmetric rational matrices $C_\ell\in\Q^{(2\ell+1)\times(2\ell+1)}$ together with integer
witnesses for their positive semidefiniteness.

Positivity of each $w^\ell_i$, required by Definition~\ref{def:certificate}, is read off from the
deposited numerators and denominators. Hypotheses \ref{H:hom}, \ref{H:mult}, and \ref{H:orth}
are finite computations on the universal tables, performed once for all certificates.
Hypothesis \ref{H:psd} follows from the deposited witnesses by Proposition~\ref{prop:psd}, and
\ref{H:trace} is a finite computation per certificate. Hypothesis \ref{H:cover} is the subject
of Section~\ref{sec:cover}.

\subsection{The identities \ref{H:hom}--\ref{H:orth}}\label{subsec:maps}

Hypothesis \ref{H:hom} is an inspection of the exponents in the deposited table: every row of
$T^\ell_{\mathrm h}$ carries exponents summing to $2\ell$.

Both \ref{H:mult} and \ref{H:orth} are identities between polynomials with integer coefficients
once the deposited scales are cleared. Writing
$A^\ell=\mathrm{Tden}_\ell\,T^\ell_{\mathrm h}$, which has integer entries, and
$\widehat D_\ell=m_\ell D_\ell$, where $m_\ell$ is a common denominator of
$w^\ell_1,\dots,w^\ell_{2\ell+1}$, they read
\begin{align*}
\mathrm{Tden}_\ell\,A^\ell(p*q)=A^\ell(p)\,A^\ell(q),
\quad
A^\ell(q)^\top \widehat D_\ell\,A^\ell(q)=\mathrm{Tden}_\ell^{2}\,|q|^{4\ell}\,\widehat D_\ell ,
\end{align*}
so each amounts to comparing two polynomial expansions with integer coefficients, coefficient by
coefficient.

\subsection{Positive semidefiniteness \ref{H:psd}}\label{subsec:C}

The positive semidefiniteness of the matrices $C_1,\dots,C_L$ of each certificate follows from
the deposited integer witnesses by the following two elementary facts. The first is a special case of Gershgorin's circle theorem \cite{Gershgorin}, which states that
every eigenvalue $\lambda$ of a square matrix $E$ satisfies
$|\lambda-E_{ii}|\le\sum_{j\ne i}|E_{ij}|$ for some $i$.

\begin{lemma}\label{lem:domPSD}
Let $E\in\R^{n\times n}$ be symmetric with
$E_{ii}\ge\sum_{j\ne i}|E_{ij}|$ for every $i$. Then $E\succeq0$.
\end{lemma}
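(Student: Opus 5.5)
The plan is to deduce Lemma~\ref{lem:domPSD} from the Gershgorin circle theorem quoted just above, combined with the spectral theorem for real symmetric matrices.

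First, since $E$ is real and symmetric, all its eigenvalues are real, and $E\succeq0$ is equivalent to all of them being nonnegative. So it suffices to fix an eigenvalue $\lambda$ of $E$ and show $\lambda\ge0$.

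Next, by Gershgorin there is an index $i$ with $|\lambda-E_{ii}|\le\sum_{j\ne i}|E_{ij}|$. Because $\lambda$ is real, this gives
\[
\lambda\ \ge\ E_{ii}-\sum_{j\ne i}|E_{ij}|\ \ge\ 0,
\]
where the last inequality is the hypothesis applied at that index $i$. Hence every eigenvalue is nonnegative and $E\succeq0$.

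There is no real obstacle here. The only care needed is to note that the eigenvalue is real, which is what lets the disc bound turn into a one-sided lower bound.

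If one prefers to avoid citing Gershgorin, the same argument can be run directly. Take an eigenvector $x$ for $\lambda$ and an index $i$ with $|x_i|=\max_k|x_k|>0$. Then
\[
(\lambda-E_{ii})x_i=\sum_{j\ne i}E_{ij}x_j,
\]
and dividing by $|x_i|$ gives the same disc bound.
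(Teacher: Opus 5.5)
Your proof is correct. Because $E$ is real symmetric, its eigenvalues are real, and $E\succeq0$ holds exactly when they are all nonnegative. The Gershgorin disc containing a real $\lambda$ then gives $\lambda\ge E_{ii}-\sum_{j\ne i}|E_{ij}|\ge0$. Your eigenvector fallback is the standard proof of Gershgorin. It is fine once you write out the triangle inequality and the bound $|x_j|\le|x_i|$.

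The paper mentions Gershgorin only as motivation, and its proof takes a different route that never touches eigenvalues. It works directly with the quadratic form $v^\top Ev$. It bounds the off-diagonal part using $|v_iv_j|\le\tfrac12(v_i^2+v_j^2)$. It then uses the symmetry of $E$ to regroup $\sum_{i\ne j}|E_{ij}|\tfrac12(v_i^2+v_j^2)$ as $\sum_i v_i^2\sum_{j\ne i}|E_{ij}|$, which is at most $\sum_iE_{ii}v_i^2$.

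The paper's argument is self-contained and proves $v^\top Ev\ge0$ straight from the definition of positive semidefiniteness. Symmetry enters only in the regrouping step. Yours relies on two imported facts: the spectral theorem, to pass from nonnegative eigenvalues to $v^\top Ev\ge0$, and Gershgorin's theorem. In return it is shorter. Both routes yield the same quantitative byproduct, $\lambda_{\min}(E)\ge\min_i\bigl(E_{ii}-\sum_{j\ne i}|E_{ij}|\bigr)$.
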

\begin{proof}
Let $v\in\R^{n}$. Using $|v_iv_j|\le\tfrac12(v_i^{2}+v_j^{2})$ and then the symmetry of $E$,
\begin{align*}
\Bigl|\sum_{i\ne j}E_{ij}v_iv_j\Bigr|
\;\le\;\sum_{i\ne j}|E_{ij}|\,\tfrac12\bigl(v_i^{2}+v_j^{2}\bigr)
\;=\;\sum_{i}v_i^{2}\sum_{j\ne i}|E_{ij}|
\;\le\;\sum_{i}E_{ii}v_i^{2},
\end{align*}
so $v^{\top}Ev=\sum_i E_{ii}v_i^{2}+\sum_{i\ne j}E_{ij}v_iv_j\ge0$.
\end{proof}

\begin{proposition}\label{prop:psd}
Write $\widehat C_\ell=2^{\kappa}C_\ell$ for the deposited integer matrix, where $\kappa$ is the
even integer deposited as \texttt{cDenLog2}, and let $S_\ell$ and
$W_\ell$ be the deposited positive integer scalar and integer matrix. If
\[
E_\ell\;=\;S_\ell\,\widehat C_\ell\;-\;W_\ell^{\top}W_\ell
\]
is symmetric and diagonally dominant, in the sense that
$(E_\ell)_{ii}\ge\sum_{j\ne i}|(E_\ell)_{ij}|$ for every $i$, then $C_\ell$ is symmetric positive
semidefinite.
\end{proposition}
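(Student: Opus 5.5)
The plan is to reduce positive semidefiniteness of $C_\ell$ to that of $E_\ell$ via Lemma~\ref{lem:domPSD}, and then use the elementary facts that Gram matrices are positive semidefinite and that this property is preserved under sums and positive scalings.

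First, symmetry. By hypothesis $E_\ell$ is symmetric, and $W_\ell^\top W_\ell$ is always symmetric. Hence $S_\ell\widehat C_\ell=E_\ell+W_\ell^\top W_\ell$ is symmetric. Since $S_\ell>0$ and $2^{\kappa}>0$, the matrix $C_\ell=2^{-\kappa}S_\ell^{-1}\bigl(E_\ell+W_\ell^\top W_\ell\bigr)$ is symmetric as well. This matters because Definition~\ref{def:certificate} only asks that $C_\ell$ be symmetric, and the deposited data might not be checked for this separately; it also lets us apply Lemma~\ref{lem:domPSD} to $E_\ell$ without an extra assumption.

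Second, positivity. Lemma~\ref{lem:domPSD} applies to the symmetric, diagonally dominant matrix $E_\ell$ and gives $E_\ell\succeq0$. For every $v\in\R^{2\ell+1}$ we have $v^\top W_\ell^\top W_\ell v=\|W_\ell v\|^2\ge0$. Therefore
\[
v^\top C_\ell v=2^{-\kappa}S_\ell^{-1}\bigl(v^\top E_\ell v+\|W_\ell v\|^2\bigr)\ge0 .
\]

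There is no real obstacle. The only points needing care are that $S_\ell$ is positive, so that dividing by it preserves the inequality, and that symmetry of $C_\ell$ follows from the hypotheses rather than being assumed. The evenness of $\kappa$ plays no role in the argument.
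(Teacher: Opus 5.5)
Your proof is correct and follows the paper's argument: Lemma~\ref{lem:domPSD} gives $E_\ell\succeq0$, the Gram matrix $W_\ell^{\top}W_\ell$ is positive semidefinite, and dividing $S_\ell\widehat C_\ell=E_\ell+W_\ell^{\top}W_\ell$ by the positive scalar $2^{\kappa}S_\ell$ gives $C_\ell\succeq0$. The only small difference is that you derive the symmetry of $C_\ell$ from the assumed symmetry of $E_\ell$, whereas the paper reads it off from the symmetry of the deposited $\widehat C_\ell$; both are valid.
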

\begin{proof}
By Lemma~\ref{lem:domPSD}, $E_\ell\succeq0$. Also $W_\ell^{\top}W_\ell\succeq0$, since
$v^{\top}W_\ell^{\top}W_\ell v=\|W_\ell v\|^{2}\ge0$ for every $v$. Hence
\[
2^{\kappa}S_\ell\,C_\ell\;=\;S_\ell\,\widehat C_\ell\;=\;E_\ell+W_\ell^{\top}W_\ell\;\succeq\;0,
\]
and dividing by the positive scalar $2^{\kappa}S_\ell$ gives $C_\ell\succeq0$. Symmetry of $C_\ell$ is
symmetry of the deposited $\widehat C_\ell$.
\end{proof}

\subsection{The trace sum \ref{H:trace}}\label{subsec:trace}

The verifier evaluates $\sum_{\ell}\sum_i(C_{\ell})_{ii}/w^{\ell}_i$ exactly and checks that it equals the deposited $T^*$. This is \ref{H:trace}.

\subsection{The certificate polynomial}\label{subsec:P}

The polynomial $P$ of Definition~\ref{def:certificate}, formed from the deposited $C_\ell$, the
tables $T^\ell_{\mathrm h}$ and the weights, has rational coefficients and is homogeneous of degree
$2L$: by \ref{H:hom} each $T^\ell_{\mathrm h}(q)_{ij}$ is homogeneous of degree $2\ell$, and
$2(L-\ell)+2\ell=2L$ for every $\ell$. The same holds for $G=-(|q|^{2})^{L}-P$.

\section{A sum-of-squares proof of hypothesis \ref{H:cover}}\label{sec:cover}

By Theorem~\ref{thm:certificate} and Section~\ref{sec:cert}, each certificate proves its bound
once \ref{H:cover} is established for it, that is, once we show
\begin{align}\label{eq:gcone}
G(q)\ \ge\ 0\quad\text{on}\quad
\Kdom(\tau)=\{q\ne0:\ Q_{\mathrm h}(q)\ge0\ \text{and}\ U_{\mathrm h}(q)\ge0\},
\end{align}
with $Q_{\mathrm h}$, $U_{\mathrm h}$ formed from the certificate's $\tau^2=A/B$ by
\eqref{eq:Qh}--\eqref{eq:Uh}.

The statement asks that one polynomial be non-negative wherever two others are. The certificate
proves it by an identity that writes a positive multiple of $G$ as
$\sigma_0+\sigma_1U_{\mathrm h}+\sigma_2Q_{\mathrm h}+\sigma_3U_{\mathrm h}Q_{\mathrm h}$, where
each $\sigma_a$ is a sum of squares of polynomials. Such a $\sigma_a$ is non-negative
everywhere, so the right-hand side is non-negative wherever $U_{\mathrm h}$ and $Q_{\mathrm h}$
are. Given such an identity, \eqref{eq:gcone} follows in two lines
(Theorem~\ref{thm:sos}), and verifying the identity is a computation of the kind the verifier
already performs for \ref{H:mult} and \ref{H:orth}: expand both sides and compare integer
coefficients. The method of certifying polynomial inequalities on semialgebraic sets by sums of
squares goes back to the Positivstellens\"atze of real algebraic geometry and to its
computational development by Lasserre and Parrilo \cite{Lasserre,Parrilo}.

\subsection{Sums of squares, and how a matrix certifies one}\label{subsec:sosbasics}

Fix an integer $d\ge0$ and let $\mathfrak m_d(q)$ denote the column vector of all
$n_d=\binom{d+3}{3}$ monomials $q_0^{e_0}q_1^{e_1}q_2^{e_2}q_3^{e_3}$ of total degree $d$, listed
in a fixed order. The verifier uses the lexicographic order of the exponent vectors.

\begin{lemma}\label{lem:gram}
Let $\Sigma\in\R^{n_d\times n_d}$ be symmetric positive semidefinite, and define
\[
\sigma(q)\;=\;\mathfrak m_d(q)^{\top}\,\Sigma\,\mathfrak m_d(q).
\]
Then $\sigma$ is a homogeneous polynomial of degree $2d$ and $\sigma(q)\ge0$ for every
$q\in\R^{4}$. Moreover $\sigma$ is a sum of squares of forms of degree $d$.
\end{lemma}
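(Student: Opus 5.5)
The proof is short linear algebra: factor the Gram matrix $\Sigma$ and expand the quadratic form. I will use only the symmetry and positive semidefiniteness of $\Sigma$.

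\emph{Homogeneity.} Each entry of $\mathfrak m_d(q)$ is a monomial of degree exactly $d$. Hence every term $\Sigma_{ab}\,\mathfrak m_d(q)_a\,\mathfrak m_d(q)_b$ of the double sum $\sigma(q)=\sum_{a,b}\Sigma_{ab}\,\mathfrak m_d(q)_a\,\mathfrak m_d(q)_b$ is either zero or a monomial of degree $2d$. So $\sigma$ is homogeneous of degree $2d$, or is the zero polynomial, which we regard as homogeneous of every degree.

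\emph{Factorization.} Since $\Sigma$ is real symmetric, the spectral theorem gives $\Sigma=O\Lambda O^{\top}$ with $O$ orthogonal and $\Lambda=\diag(\lambda_1,\dots,\lambda_{n_d})$. Positive semidefiniteness gives $\lambda_i\ge0$. Set $M=\Lambda^{1/2}O^{\top}$, so that $\Sigma=M^{\top}M$. Alternatively, a Cholesky-type factorization $\Sigma=M^{\top}M$ works equally well.

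\emph{Sum of squares.} Then
\[
\sigma(q)=\bigl(M\mathfrak m_d(q)\bigr)^{\top}\bigl(M\mathfrak m_d(q)\bigr)=\sum_{i=1}^{n_d}f_i(q)^2,
\qquad f_i(q)=\sum_b M_{ib}\,\mathfrak m_d(q)_b .
\]
Each $f_i$ is a real linear combination of degree-$d$ monomials, so it is a form of degree $d$. This exhibits $\sigma$ as a sum of squares of forms of degree $d$, and non-negativity at every $q\in\R^4$ follows at once.

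None of these steps is a real obstacle. The only point needing care is that the factor $M$ may be irrational even when $\Sigma$ is rational. That does no harm here, because the lemma only asserts real sums of squares. For the exact verification, non-negativity is all that is used, and it follows directly from $v^{\top}\Sigma v\ge0$ with $v=\mathfrak m_d(q)$; the factorization is not needed for it. For rational $\Sigma$, positive semidefiniteness is certified by a witness as in Proposition~\ref{prop:psd}.
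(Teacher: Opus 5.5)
Your proposal is correct and follows the paper's proof essentially step for step. Homogeneity comes from the degree-$2d$ monomial entries of $\mathfrak m_d(q)\mathfrak m_d(q)^{\top}$, non-negativity from $v^{\top}\Sigma v\ge0$ at $v=\mathfrak m_d(q)$, and the sum-of-squares form from a spectral factorization $\Sigma=W^{\top}W$. Your remark that only non-negativity, and hence only $\Sigma\succeq0$, is used later also matches the paper's comment after the lemma.
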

\begin{proof}
Each entry of $\mathfrak m_d(q)\mathfrak m_d(q)^{\top}$ is a monomial of degree $2d$, so $\sigma$
is a form of degree $2d$. For fixed $q$ the number $\sigma(q)=v^{\top}\Sigma v$ with
$v=\mathfrak m_d(q)$ is non-negative because $\Sigma\succeq0$. For the last statement, write
$\Sigma=W^{\top}W$, for instance from the spectral decomposition. Then
$\sigma(q)=\|W\mathfrak m_d(q)\|^{2}$ is the sum of the squares of the forms $\bigl(W\mathfrak
m_d(q)\bigr)_i$.
\end{proof}

Only the inequality $\sigma\ge0$ is used below, and it needs only $\Sigma\succeq0$. Positive
semidefiniteness of a rational $\Sigma$ is established from a deposited integer witness by
Proposition~\ref{prop:psd}, exactly as for the matrices $C_\ell$.

\subsection{The certificate identity}\label{subsec:sosthm}

\begin{theorem}\label{thm:sos}
Fix a certificate at $(\tau,L,T^{*})$, let $G$ be its cover polynomial \eqref{eq:G}, and let
$Q_{\mathrm h}$, $U_{\mathrm h}$ be as in \eqref{eq:Qh}--\eqref{eq:Uh}. Suppose we are given an
integer $k\ge0$ with $L+k\ge4$ and four symmetric positive semidefinite matrices with rational
entries,
\begin{align*}
\Sigma_0\in\Q^{n_{L+k}\times n_{L+k}},\quad
\Sigma_1,\Sigma_2\in\Q^{n_{L+k-2}\times n_{L+k-2}},\quad
\Sigma_3\in\Q^{n_{L+k-4}\times n_{L+k-4}},
\end{align*}
such that, with $\sigma_a=\mathfrak m^{\top}\Sigma_a\,\mathfrak m$ formed from the monomial vector
of the matching degree, the identity
\begin{align}\label{eq:sos}
\bigl(|q|^{2}\bigr)^{k}\,G
\;=\;\sigma_0\;+\;\sigma_1\,U_{\mathrm h}\;+\;\sigma_2\,Q_{\mathrm h}
\;+\;\sigma_3\,U_{\mathrm h}\,Q_{\mathrm h}
\end{align}
holds in $\Q[q_0,q_1,q_2,q_3]$. Then hypothesis \ref{H:cover} holds.
\end{theorem}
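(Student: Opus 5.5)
The plan is to evaluate both sides of the identity \eqref{eq:sos} at an arbitrary point $q\in\Kdom(\tau)$ and read off the sign of $G(q)$. The only ingredients are Lemma~\ref{lem:gram}, the definition of $\Kdom(\tau)$, and the positivity of $|q|^2$ away from the origin.

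First I check that the identity is well-posed as an equality of forms. By Section~\ref{subsec:P}, $G$ is homogeneous of degree $2L$, so the left-hand side has degree $2L+2k$. Each $\sigma_a$ is a form of degree twice the monomial degree, by Lemma~\ref{lem:gram}. Hence $\sigma_0$ has degree $2(L+k)$. The forms $\sigma_1$ and $\sigma_2$ have degree $2(L+k)-4$, and multiplying by the quartics $U_{\mathrm h}$, $Q_{\mathrm h}$ restores degree $2(L+k)$. Likewise $\sigma_3$ has degree $2(L+k)-8$, and $U_{\mathrm h}Q_{\mathrm h}$ has degree $8$. The hypothesis $L+k\ge4$ is exactly what makes the smallest monomial degree $L+k-4$ non-negative, so the monomial vectors are defined. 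This step is bookkeeping only; the identity itself is assumed.

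Next, fix $q\in\Kdom(\tau)$, so $q\ne0$, $Q_{\mathrm h}(q)\ge0$ and $U_{\mathrm h}(q)\ge0$. Since each $\Sigma_a$ is symmetric positive semidefinite, Lemma~\ref{lem:gram} gives $\sigma_a(q)\ge0$ for $a=0,1,2,3$. Each term on the right of \eqref{eq:sos} is then a product of non-negative reals:
\[
\sigma_0(q),\qquad \sigma_1(q)U_{\mathrm h}(q),\qquad \sigma_2(q)Q_{\mathrm h}(q),\qquad \sigma_3(q)U_{\mathrm h}(q)Q_{\mathrm h}(q).
\]
So the right-hand side is $\ge0$. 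Evaluating the polynomial identity at $q$ therefore gives $(|q|^2)^kG(q)\ge0$. Since $q\ne0$, we have $(|q|^2)^k>0$, and dividing yields $G(q)\ge0$. This is hypothesis \ref{H:cover}.

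There is no real obstacle in the deduction. The substantive content lies in producing rational $\Sigma_a$ that satisfy \eqref{eq:sos} exactly, and in certifying that each $\Sigma_a\succeq0$, which is done via Proposition~\ref{prop:psd}. Both of these are hypotheses of the theorem rather than part of its proof. The only point needing care is that evaluation at $q$ is legitimate because \eqref{eq:sos} holds in $\Q[q_0,q_1,q_2,q_3]$, and that the factor $(|q|^2)^k$ causes no sign issue because the cone $\Kdom(\tau)$ excludes the origin.
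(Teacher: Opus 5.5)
Your proposal is correct and follows the paper's proof: for $q\in\Kdom(\tau)$ you use Lemma~\ref{lem:gram} to get $\sigma_a(q)\ge0$, conclude that each summand on the right of \eqref{eq:sos} is non-negative, and divide by $\bigl(|q|^{2}\bigr)^{k}>0$. The degree bookkeeping you add is accurate, but as you note it is not needed, since the identity is a hypothesis.
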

\begin{proof}
Let $q\in\Kdom(\tau)$, so that $U_{\mathrm h}(q)\ge0$ and $Q_{\mathrm h}(q)\ge0$. By
Lemma~\ref{lem:gram} each $\sigma_a(q)\ge0$, so every one of the four summands on the right-hand
side of \eqref{eq:sos} is non-negative at $q$, and therefore $\bigl(|q|^{2}\bigr)^{k}G(q)\ge0$.
Since $q\ne0$, dividing by $\bigl(|q|^{2}\bigr)^{k}>0$ gives $G(q)\ge0$.
\end{proof}
 
\subsection{What is deposited, and how it is verified}\label{subsec:sosverify}

For each certificate the following records are deposited, in the format of
Appendix~\ref{app:data}:
\begin{itemize}[leftmargin=1.4em]
\item the integer $k$ of \eqref{eq:sos}, as \texttt{sosExponentK},
\item an even integer $\kappa_{\mathrm s}$, as \texttt{sosDenLog2},
\item the integer symmetric matrices \texttt{sosGram000$a$}${}=2^{\kappa_{\mathrm s}}\Sigma_a$ for
$a=0,\dots,3$,
\item for each $a$, the integer witnesses \texttt{sosSwit000$a$} and \texttt{sosWwit000$a$} of
Proposition~\ref{prop:psd}.
\end{itemize} The verifier then performs the following steps, all in exact
arithmetic.

\begin{enumerate}[leftmargin=2.6em,label=\textup{(S\arabic*)}]
\item\label{S:shape} \emph{Shape.} Each deposited $2^{\kappa_{\mathrm s}}\Sigma_a$ is symmetric
and of the size prescribed by Theorem~\ref{thm:sos}, namely $n_{L+k}$, $n_{L+k-2}$, $n_{L+k-2}$
and $n_{L+k-4}$, computed from the deposited $L$ and $k$.
\item\label{S:psd} \emph{Positivity.} For each $a$, the matrix
$E_a=S_a\bigl(2^{\kappa_{\mathrm s}}\Sigma_a\bigr)-W_a^{\top}W_a$ is symmetric and diagonally
dominant, whence $\Sigma_a\succeq0$ by Proposition~\ref{prop:psd}.
\item\label{S:assemble} \emph{Assembly.} The polynomial $G$ is built from the deposited $C_\ell$,
the universal tables and the weights, as in Section~\ref{subsec:P}. The generators
$Q_{\mathrm h}$ and $U_{\mathrm h}$ are read from the certificate records and validated against
\eqref{eq:Qh}--\eqref{eq:Uh} at the deposited $A$ and $B$ (check \ref{V:gens} of
Appendix~\ref{app:data}). The left-hand side of \eqref{eq:sos} is $G$ multiplied $k$ times by
$|q|^{2}$.
\item\label{S:identity} \emph{The identity.} The right-hand side of \eqref{eq:sos} is expanded:
for each $a$ and each pair $i\le j$ of indices, the entry $(\Sigma_a)_{ij}$ contributes
$(2-\delta_{ij})(\Sigma_a)_{ij}$ times the monomial $(\mathfrak m)_i(\mathfrak m)_j$, and the
blocks $a=1,2,3$ are then multiplied by $U_{\mathrm h}$, $Q_{\mathrm h}$ and
$U_{\mathrm h}Q_{\mathrm h}$ respectively. Both sides are scaled by a common denominator
$2^{\kappa_{\mathrm s}}$. The identity becomes an equality between two lists of $n_{2L+2k}$
integers, compared entry by entry.
\end{enumerate}

If \ref{S:shape}--\ref{S:identity} succeed, then Theorem~\ref{thm:sos} applies and \ref{H:cover}
holds. The number of coefficient equations in \ref{S:identity} is
$n_{2L+2k}=\binom{2L+2k+3}{3}$. All intermediate integers are bounded by an explicit function of
the deposited scales and are checked against the machine word width as in
Appendix~\ref{app:data}. A reader who wishes to reimplement the check needs only a parser for
the format of Appendix~\ref{app:data} and exact polynomial multiplication in four variables.

Hypotheses \ref{H:hom}--\ref{H:cover} are established for each of the deposited certificates, so
Theorem~\ref{thm:certificate} yields the bounds of Table~\ref{tab:certs}, and with them the cases
$4\le n\le7$ of Theorem~\ref{thm:A}.

\begin{remark}\label{rem:found}
The window configurations come from numerical
optimization of the minimum pairwise distance, followed by rounding to rational tangent lines. The
certificates come from semidefinite programs. Their numerical solutions were rounded to rational
data and then repaired so that all identities hold exactly.
\end{remark}

\section{Open problems}\label{sec:open}

The smallest open case of Question~\ref{Q:insertion} is $n=8$. We believe that the answer is ``no''.

\begin{conjecture}\label{conj:eight}
$(8,\omega_8)\in\Pcyl$.
\end{conjecture}

Question~\ref{Q:expansion} is open for every $n\ge5$.

\begin{problem}
Determine $R_n$ for $n\ge5$ \cite[Section~2.4, Problem~17]{BMP}.
\end{problem}

Brass, Moser, and Pach \cite[Section~2.4]{BMP} also asked for the asymptotic behavior of $R_n$. The
ring gives $\liminf_{n\to\infty}nR_n\ge\pi$.

\begin{problem}
Determine $\lim_{n\to\infty}nR_n$, or show that it does not exist.
\end{problem}

\appendix

\section{Elementary estimates}\label{app:decimal}

This appendix proves the estimates \ref{E:sin}--\ref{E:mono} of Section~\ref{sec:proofB}, used
in Lemmas~\ref{lem:tailstar2}, \ref{lem:tailarc} and~\ref{lem:sinbound}.

\begin{lemma}\label{lem:elementary}
Statements \textup{\ref{E:sin}--\ref{E:mono}} hold.
\end{lemma}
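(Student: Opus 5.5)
We prove the four estimates one at a time, using nothing beyond monotonicity arguments from the sign of a derivative. Each reduces to checking that some auxiliary function vanishes at $0$ and has a derivative of constant sign.

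For \ref{E:sin}, put $f(x)=x-\sin x$. Then $f(0)=0$ and $f'(x)=1-\cos x\ge0$, so $\sin x\le x$ for $x\ge0$. Next put $h(x)=\sin x-x+\frac{x^3}6$. Then $h(0)=h'(0)=0$, and $h''(x)=x-\sin x\ge0$ by the first bound. Integrating twice from $0$ gives $h\ge0$ on $[0,\infty)$.

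For \ref{E:tan}, put $k(y)=y+y^3-\tan y$ on $[0,1]$. We have $k(0)=0$ and $k'(y)=3y^2-\tan^2y$, so it suffices to show $\tan y\le\sqrt3\,y$ on $[0,1]$. The function $\tan y/y$ is increasing on $(0,\frac\pi2)$, because $\tan$ is convex there with $\tan0=0$. Hence it is enough to check the endpoint $\tan1\le\sqrt3$. This holds because $1<\frac\pi3$ and $\tan\frac\pi3=\sqrt3$, with $\pi>3$ being the only fact about $\pi$ used. This endpoint check is the one step that depends on a numerical value rather than pure calculus. If one prefers to avoid it, an alternative is to write $\tan y=\sin y/\cos y$ and use \ref{E:sin} together with $\cos y\ge1-\frac{y^2}2$ to bound $\tan y$ directly.

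For \ref{E:atan}, the subtraction formula follows from $\tan(a-b)=\frac{\tan a-\tan b}{1+\tan a\tan b}$ with $a=\arctan p$ and $b=\arctan q$. The condition $pq>-1$ is what places $a-b$ in $(-\frac\pi2,\frac\pi2)$. To see this, assume without loss of generality $a>b$ (the case $a=b$ is trivial). If $a-b\ge\frac\pi2$, then $a>0>b$, so $\tan b\le\tan(a-\frac\pi2)=-\cot a$. Since $\tan a>0$, this gives $pq=\tan a\tan b\le-1$, a contradiction. For the second part, $\arctan z\ge0$ for $z\ge0$ is clear. The bound $\arctan z\le z$ follows because $z-\arctan z$ vanishes at $0$ and has derivative $\frac{z^2}{1+z^2}\ge0$.

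For \ref{E:mono}, write $c=1+S^2T^2$. The derivative of $\frac z{c-z^2}$ is $\frac{c+z^2}{(c-z^2)^2}>0$ on $[0,\sqrt c)$, so the function is strictly increasing there.
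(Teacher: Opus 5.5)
Your proof is correct and follows the paper's argument for each of (E1)--(E4), differing only in inessential details. You get the monotonicity of $\tan y/y$ from convexity of $\tan$ rather than by differentiating, you prove $\arctan z\le z$ through the derivative of $z-\arctan z$ rather than convexity of $\tan$, and you spell out why $pq>-1$ keeps $\arctan p-\arctan q$ in $(-\frac\pi2,\frac\pi2)$. Your aside for (E2) also works: $\tan y\le \frac{y}{1-y^2/2}\le y+y^3$ on $(0,1]$, and the last step reduces to $y^2\le 1$.
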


\begin{proof}
\ref{E:sin}: $f(x)=\sin x-x+\frac{x^3}6$ has $f(0)=0$ and $f'(x)=\cos x-1+\frac{x^2}2\ge0$,
since $g(x)=\cos x-1+\frac{x^2}2$ has $g(0)=0$ and $g'(x)=x-\sin x\ge0$. The upper bound is
standard.

\ref{E:tan}: $\frac{\tan y}y$ is increasing on $(0,\frac\pi2)$ because
$(y\sec^2y-\tan y)=\frac{y-\sin y\cos y}{\cos^2y}>0$. Hence for $y\in(0,1]$,
$\tan y\le y\tan1<y\sqrt3$, using $1<\frac\pi3$ and the monotonicity of $\tan$. Then
$h(y)=y+y^3-\tan y$ has $h(0)=0$ and $h'(y)=3y^2-\tan^2y>3y^2-3y^2=0$.

\ref{E:atan}: apply $\tan$ to both sides for the identity. The condition $pq>-1$ keeps the
difference in $(-\frac\pi2,\frac\pi2)$. The inequality $\arctan z\le z$ holds since $\tan$ is
convex on $[0,\frac\pi2)$.

\ref{E:mono}: the derivative has numerator $1+S^2T^2+z^2>0$.
 
\end{proof}

\section{Deposited data and standalone verifier}\label{app:data}

The data are in plain-text files in a self-describing format, designed to be read by a short
standalone parser. Each record consists of a key, written \texttt{[key]Name[/key]}, followed by a
value block \texttt{[value]}\,$\dots$\,\texttt{[/value]}. A value block holds one or more rows.
Each row, and each entry within a row, is delimited by \texttt{\{} and \texttt{\}}. Every value is
therefore a nested list of integers of depth two, including those that carry a single number, which
appear as one row with one entry. Keys that depend on an index $\ell$ or $a$ end in it as a
four-digit number.

The \emph{universal} records, identical in every certificate file, are, for each
$\ell=1,\dots,L$:
\begin{itemize}[leftmargin=1.4em]
\item \texttt{WNum000$\ell$} and \texttt{WDen000$\ell$}: the weights $w^\ell_i$ as
numerator/denominator pairs, forming \[D_\ell=\diag(w^\ell_1,\dots,w^\ell_{2\ell+1});\]
\item \texttt{Tpoly000$\ell$} together with the scale \texttt{Tden000$\ell$}. Each row has 7 numbers
$i$, $j$, $e_0$, $e_1$, $e_2$, $e_3$, $c$. These seven numbers mean that the polynomial
$T^\ell_{\mathrm h}(q)_{ij}$ contains the term
$(c/\mathrm{Tden}_\ell)\,q_0^{e_0}q_1^{e_1}q_2^{e_2}q_3^{e_3}$. The polynomial
$T^\ell_{\mathrm h}(q)_{ij}$ is the sum over the rows carrying that $i,j$.
\end{itemize}
Each \emph{certificate} file records:
\begin{itemize}[leftmargin=1.4em]
\item \texttt{Lmax}: the truncation $L$. All degree-dependent constants of the verifier are
derived from it;
\item \texttt{targetBound}: the positive integer $N$ of the claim. It enters the verification
only through the counting check \ref{V:count};
\item \texttt{tauSqNum} and \texttt{tauSqDen}: the positive integers $A$ and $B$ with
$\tau^2=A/B$;
\item \texttt{UhPoly} and \texttt{QhPoly}: the integer coefficient lists of the generators
$U_{\mathrm h}$ and $Q_{\mathrm h}$, in the row format of \texttt{Tpoly}. The verifier also
constructs these polynomials from the closed forms \eqref{eq:Qh}--\eqref{eq:Uh} at the deposited
$A$ and $B$, and checks that the deposited lists agree with them (check \ref{V:gens} below); 
\item \texttt{MNum}, \texttt{MDen}, \texttt{TstarNum}, \texttt{TstarDen}: a rational number $M<0$
and the trace sum $T^*$. The files are not normalized. Dividing the matrices $C_\ell$, the number
$T^*$, the polynomial $G$ and the matrices $\Sigma_a$ by $-M$ gives a certificate in the sense of
Definition~\ref{def:certificate}. All checks below are invariant under this scaling;
\item \texttt{cDenLog2}: the even integer $\kappa\in[0,60]$ scaling the next record;
\item \texttt{Cscaled000$\ell$} $=2^{\kappa}C_\ell$, as the rows of an integer matrix;
\item \texttt{Swit000$\ell$} and \texttt{Wwit000$\ell$}: the integer scalar $S_\ell$ and the
integer matrix $W_\ell$ of Proposition~\ref{prop:psd};
\item \texttt{sosExponentK}: the integer $k$ of the identity \eqref{eq:sos};
\item \texttt{sosDenLog2}: the even integer $\kappa_{\mathrm s}$ scaling the next record;
\item \texttt{sosGram000$a$} $=2^{\kappa_{\mathrm s}}\Sigma_a$ for $a=0,\dots,3$, as the rows of
integer symmetric matrices of the sizes prescribed by Theorem~\ref{thm:sos};
\item \texttt{sosSwit000$a$} and \texttt{sosWwit000$a$}: the integer scalar $S_a$ and the integer
matrix $W_a$ of Proposition~\ref{prop:psd} for $\Sigma_a$.
\end{itemize}

Every other quantity in the proof is derived from these records: the reciprocals $1/w^\ell_i$
from \texttt{WNum}/\texttt{WDen}, and the polynomials $P$ and $M(|q|^{2})^{L}-P$ from the
$C_\ell$, the tables and the weights. Apart from $U_{\mathrm h}$, $Q_{\mathrm h}$ and the powers
of $|q|^2$, the verifier constructs no polynomial in $q_0,\dots,q_3$ other than by expanding
deposited records.

On these records the standalone verifier confirms, for each certificate in turn:
\begin{enumerate}[leftmargin=3.2em,label=\textup{(V\arabic*)}]
\item\label{V:dom} the claim is well formed: \texttt{targetBound}${}\ge1$, $M<0$, and
$0<\tau^{2}<4$, so that $\omega=\tau/(2-\tau)$ is defined and positive;
\item\label{V:gens} the deposited generator records \texttt{UhPoly} and \texttt{QhPoly}
coincide, coefficient by coefficient, with the closed forms \eqref{eq:Qh}--\eqref{eq:Uh}
evaluated at the deposited $A$ and $B$;
\item\label{V:tab} \ref{H:hom} and the positivity of the weights, by inspection of the exponents in
\texttt{Tpoly000$\ell$} and of the signs in \texttt{WNum000$\ell$}, \texttt{WDen000$\ell$};
\ref{H:mult} and \ref{H:orth}, by expanding both sides of the two identities of
Section~\ref{subsec:maps} and comparing them coefficient by coefficient (once, since the tables are
universal);
\item\label{V:psd} the hypothesis of Proposition~\ref{prop:psd}, forming
$E_\ell=S_\ell\bigl(2^{\kappa}C_\ell\bigr)-W_\ell^{\top}W_\ell$ and testing symmetry and diagonal
dominance;
\item\label{V:tr} \ref{H:trace}, by evaluating $\sum_{\ell}\sum_i (C_\ell)_{ii}/w^\ell_i$ and
comparing it with the deposited $T^*$;
\item\label{V:count} the \emph{counting inequality}
\begin{align*}
T^{*}\;<\;(-M)\cdot\texttt{targetBound},
\quad\text{equivalently}\quad
\texttt{targetBound}\;\ge\;1+\left\lfloor\frac{T^{*}}{-M}\right\rfloor,
\end{align*}
in exact rational arithmetic, which by Theorem~\ref{thm:certificate} turns
hypotheses \ref{H:hom}--\ref{H:cover} into the bound $N_{\max}(\tau)\le\texttt{targetBound}$;
\item\label{V:sospsd} the hypothesis of Proposition~\ref{prop:psd} for each multiplier matrix
$\Sigma_a$, from the records \texttt{sosSwit000$a$} and \texttt{sosWwit000$a$}, establishing
$\Sigma_a\succeq0$ (step \ref{S:psd} of Section~\ref{sec:cover});
\item\label{V:cover} \ref{H:cover}, by expanding both sides of the identity \eqref{eq:sos} and
comparing them coefficient by coefficient (steps \ref{S:shape}, \ref{S:assemble} and
\ref{S:identity} of Section~\ref{sec:cover}). The conclusion follows by Theorem~\ref{thm:sos}.
\end{enumerate}

Every one of these checks is exact. The verifier uses no floating-point arithmetic. Integers are
held in fixed-width machine words of $128$, $256$ or $512$ bits, selected from the deposited $L$.
Each operation is checked for overflow. For rational numbers, we use two
representations. If the number is not dyadic, then we store the numerator and denominator. In the
dyadic case the denominator is a power of two, and we keep only its exponent. Addition,
multiplication and comparison are implemented so that no rounding can occur, and every operation is
guarded by an overflow test. Any guard that fires sets a permanent flag. A flagged run reports the
fact and its output is discarded.

\subsection*{The verifier's contract}
The verifier is a single program with a one-line verdict. Invoked without arguments it prints its
own contract, which we reproduce in substance:

\begin{quote}
The input file contains, among other data, a positive integer \texttt{targetBound} and positive
integers \texttt{tauSqNum}, \texttt{tauSqDen} with $\texttt{tauSqNum}<4\cdot\texttt{tauSqDen}$.
Define $\tau=\sqrt{\texttt{tauSqNum}/\texttt{tauSqDen}}$ and $\omega=\tau/(2-\tau)$. If the last
line printed by the program is \texttt{THE THEOREM WAS PROVED}, then the following statement is a
theorem: \emph{no more than \texttt{targetBound} pairwise non-overlapping cylinders of radius
$\omega$ can simultaneously touch the unit ball.} Everything else in the input file is a
machine-generated witness. No modification of the input, however adversarial, can make the program
print the verdict for a false statement. The only realistic effect of tampering is that the proof
is not found.
\end{quote}
In the language of Definition~\ref{def:Pcyl}: the verdict certifies
$(\texttt{targetBound},\,\omega)\in\Pcyl$.

The final line is printed only if all of \ref{V:dom}--\ref{V:cover} succeed \emph{and} every
arithmetic flag is still clear at the end of the run. Any failure, at any stage, suppresses it.
The soundness of the verdict therefore rests on the verifier source, the compiler and hardware
executing it, and Theorem~\ref{thm:certificate}. 
Two input switches, \texttt{skipMultVerification} and \texttt{skipOrthVerification},
allow the universal checks in \ref{V:tab} to be skipped when the tables have been verified
separately. If either switch is active, the verdict is conditional on those identities. Both
switches are off in all four deposited certificates.

The window configurations of Section~\ref{sec:window} are deposited in the format described
there, and are checked together with the inequalities of Table~\ref{tab:checks} by the run
\texttt{cylie -i constructions} of Section~\ref{subsec:checks}.

The rational configuration of Remark~\ref{rem:five} is deposited in \texttt{cyl\_window\_E5.txt}.
The file records five rows and a rational number $\tau=\frac{13600}{11513}$. The same run checks that
the rows are rational tangent lines with pairwise distances at least $\tau$. Moreover
$5-2\tau^2>0$ and $(5-2\tau^2)^2<5$, so $\tau^2>\frac{5-\sqrt5}2=4\sin^2\frac\pi5$. Hence the five lines
give five non-overlapping cylinders of radius $\frac{\tau}{2-\tau}>\omega_5$.

\section*{Acknowledgements}
We thank Frank Vallentin and Fernando M\'ario de Oliveira Filho for many discussions of the
problem and the paper. Frank Vallentin suggested replacing the subdivision of the safe zone by a
sum-of-squares certificate. This made the verification of Theorem~\ref{thm:certificate} fast and
algebraic. The semidefinite programs that produced the certificates were solved with CSDP
\cite{Borchers}.

\section*{On the use of AI}
Ideas were discussed and the literature was reviewed with the help of Anthropic's Claude
models. The programs that searched for certificates and for the configurations of
Section~\ref{sec:window} were written with the assistance of these models. The verifier
\cite{MRVer} was written by the authors and reviewed and tested both by the authors and by
the models. The proofs use the certificates and configurations only through the verifier, and
do not depend on how they were found. The text was written by the authors and then proofread
and edited with the same models.


\begin{thebibliography}{99} 
\bibitem{Bochner} S.~Bochner, \emph{Hilbert distances and positive definite functions}, Ann.\ of
Math.\ (2) \textbf{42} (1941), 647--656.

\bibitem{BtD} T.~Br\"ocker and T.~tom Dieck, \emph{Representations of Compact Lie Groups},
Graduate Texts in Mathematics~98, Springer, 1985.

\bibitem{Borchers} B.~Borchers, \emph{CSDP, a C library for semidefinite programming}, Optim.\
Methods Softw.\ \textbf{11} (1999), 613--623.

\bibitem{BMP} P.~Bra\ss, W.~Moser, and J.~Pach, \emph{Research Problems in Discrete Geometry},
Springer, New York, 2005.

\bibitem{BW} P.~Bra\ss{} and C.~Wenk, \emph{On the number of cylinders touching a ball}, Geom.\
Dedicata \textbf{81} (2000), 281--284.

\bibitem{DGS} P.~Delsarte, J.~M.~Goethals, and J.~J.~Seidel, \emph{Spherical codes and designs},
Geom.\ Dedicata \textbf{6} (1977), 363--388.

\bibitem{F} M.~Firsching, \emph{Optimization methods in discrete geometry}, PhD thesis, Freie
Universit\"at Berlin (2016).

\bibitem{Gershgorin} S.~Gerschgorin, \emph{\"Uber die Abgrenzung der Eigenwerte einer Matrix},
Izv.\ Akad.\ Nauk SSSR Ser.\ Mat.\ (1931), no.~6, 749--754.

\bibitem{HS} A.~Heppes and L.~Szab\'o, \emph{On the number of cylinders touching a ball},
Geom.\ Dedicata \textbf{40} (1991), 111--116.

\bibitem{K} W.~Kuperberg, \emph{How many unit cylinders can touch a unit ball?}, Problem 3.3,
DIMACS Workshop on Polytopes and Convex Sets, Rutgers University, January 10, 1990.

\bibitem{dLV} D.~de Laat and F.~Vallentin, \emph{A semidefinite programming hierarchy for
 packing problems in discrete geometry}, Math.\ Program.\ \textbf{151} (2015), 529--553. 

\bibitem{Lasserre} J.~B.~Lasserre, \emph{Global optimization with polynomials and the problem
of moments}, SIAM J.\ Optim.\ \textbf{11} (2001), 796--817.


\bibitem{Lovasz} L.~Lov\'asz, \emph{On the Shannon capacity of a graph}, IEEE Trans.\ Inform.\
Theory \textbf{25} (1979), 1--7.

\bibitem{MRVer} I.~Mati\'c, R.~Radoi\v ci\'c, and A.~Spomer, \emph{cylie: a standalone exact-arithmetic
verifier for cylinder insertions and expansions} (computer program),
\verb+https://github.com/maticivan/cylie+ (2026).   

\bibitem{MRCode2026} I.~Mati\'c and R.~Radoi\v ci\'c, \emph{Source code and certificates for
computer-assisted bounds on cylinders touching a ball},
\verb+https://github.com/maticivan/cyl+ (2026),
\url{https://doi.org/10.5281/zenodo.21706193}.


\bibitem{MRsix} I.~Mati\'c and R.~Radoi\v ci\'c, \emph{A computer-assisted proof of Kuperberg's
six-cylinder conjecture}, preprint (2026). 


\bibitem{OSMorse} O.~Ogievetsky and S.~Shlosman, \emph{Critical configurations of solid bodies and the
Morse theory of MIN functions}, Russian Math.\ Surveys \textbf{74} (2019), no.~4, 631--657;
arXiv:1907.01896.

\bibitem{OSsix} O.~Ogievetsky and S.~Shlosman, \emph{The six cylinders problem:
$\mathbb{D}_3$-symmetry approach}, Discrete Comput.\ Geom.\ \textbf{65} (2021), no.~2,
385--404; arXiv:1805.09833.

\bibitem{OSCm} O.~Ogievetsky and S.~Shlosman, \emph{Extremal cylinder configurations I:
configuration $C_{\mathfrak m}$}, Discrete Comput.\ Geom.\ \textbf{66} (2021), no.~1, 140--164;
arXiv:1812.09543.

\bibitem{OSO6} O.~Ogievetsky and S.~Shlosman, \emph{Extremal cylinder configurations II:
configuration $O_6$}, Exp.\ Math.\ \textbf{32} (2023), no.~2, 486--496 (published online
2019); arXiv:1902.08995.


\bibitem{OSPlatonic} O.~Ogievetsky and S.~Shlosman, \emph{Platonic compounds of cylinders},
in: Integrability, Quantization, and Geometry II, Proc.\ Sympos.\ Pure Math.\ \textbf{103.2},
Amer.\ Math.\ Soc.\ (2021), 447--480; arXiv:1904.02043.

\bibitem{OSunlocking} O.~Ogievetsky and S.~Shlosman, \emph{The art of unlocking}, 
Math.\ Intelligencer \textbf{44} (2022), 320--325, arXiv:2202.12013.

\bibitem{OV} F.~M.\ de Oliveira Filho and F.~Vallentin, \emph{Mathematical optimization for packing
problems}, arXiv:1403.1166 (2014).

\bibitem{Parrilo} P.~A.~Parrilo, \emph{Semidefinite programming relaxations for semialgebraic
problems}, Math.\ Program.\ \textbf{96} (2003), 293--320.


\bibitem{SOV} F.~M.\ de Oliveira Filho, A.~Spomer, and F.~Vallentin, \emph{Bounding the density of spherical polygon packings}, arXiv:2604.21451 (2026).

\bibitem{Spomer} A.~Spomer, \emph{Packing regular spherical polygons}, PhD thesis, Universit\"at zu
K\"oln, 2026.

\bibitem{VICM} F.~Vallentin, \emph{Conic optimization for extremal geometry}, arXiv:2510.06960
(2025), to appear in Proc. ICM 2026.

\bibitem{Yar} O.~Yardimci, \emph{On the number of cylinders touching a sphere}, PhD
dissertation, Auburn University (2019); 



\end{thebibliography}
\end{document}